\newtheorem{theo}{Theorem}[section]
\newtheorem{lemm}[theo]{Lemma}
\newtheorem{coro}[theo]{Corolary}
\newtheorem{prop}[theo]{Proposition}
\newtheorem{defi}[theo]{Definition}
\newtheorem{remark}[theo]{Remark}
\numberwithin{equation}{section}
\title{Convergence and positivity of finite element methods for a haptotaxis model of tumoral invasion}
\author[a]{Viviana Ni\~no-Celis}
\author[a]{Diego A. Rueda-G\'omez}
\author[a]{\'Elder J. Villamizar-Roa\thanks{Corresponding author.
\href{mailto:jvillami@uis.edu.co}{jvillami@uis.edu.co}  (E. J. Villamizar-Roa).}}
\affil[a]{Universidad Industrial de Santander, Escuela de Matem\'{a}ticas, A.A. 678, Bucaramanga, Colombia.}
\DeclareRobustCommand{\uvec}[1]{{%
  \ifcsname uvec#1\endcsname
     \csname uvec#1\endcsname
   \else
    \bm{\hat{\mathbf{#1}}}%
   \fi
}}
\date{}
\begin{document}
\maketitle
\begin{abstract}
In this paper, we consider a mathematical model for the invasion of host tissue by tumour cells in a $d$-dimensional bounded domain, $d\leq 3$. This model consists of a system of differential equations describing the evolution of cancer cell density, the extracellular matrix protein density and the matrix degrading enzyme concentration.
We develop two fully discrete schemes for approximating the solutions based on the Finite Element (FE) method. For the first numerical scheme, we use a splitting technique to deal with the 
haptotaxis term, leading to introduce an equivalent system with a new variable given by the gradient of extracellular matrix. This scheme is well-posed and preserves the 
non-negativity of extracellular matrix and the degrading enzyme.  We analyze error estimates and convergence towards regular solutions.  The second numerical scheme is based on an equivalent formulation in which the  cancer cell density equation is expressed in a divergence form through a suitable change of variables. This second numerical scheme preserves the non-negativity of all the discrete variables. Finally, we present some numerical simulations in agreement with the theoretical analysis. 

\vspace{0.3cm}

\noindent{\bf Keywords.} Haptotaxis, tumoral invasion, finite elements, convergence rates, error estimates, positivity. \vspace{0.3cm}

\noindent{\bf AMS subject classifications.}  {35Q92; 92C50; 92C15; 65M12; 65M15; 65M60}
\end{abstract}

\section{Introduction}
{Tissue invasion represents one of the most critical steps in cancer metastasis, which is characterized essentially by four hallmark features, namely, the cancer cell adhesion to the surrounding tissue or extracellular matrix, the secretion of the matrix degrading enzymes and the degradation of extracellular matrix, the migration of the cancer cells, and the proliferation of tumor cells. In particular, the degradation of the extracellular matrix by the degrading enzymes produces gradients of non-diffusible molecules within extracellular matrix which direct the movement of invasive cells. This mobility mechanism is known as {\it Haptotaxis}.\\ 

In order to describe the cancer invasion mechanism, a variety of mathematical models have been proposed, see for instance \cite{Anderson, Bellomo,Chaplain,Chaplain1,Gatenby,Gerisch,Lachowicz,Lachowicz1,Perumpanani,Szymanska,Szymanska1}. Gatenby and Gawlinski \cite{Gatenby} considered a reaction-diffusion model to examine the tumor invasion of the surrounding tissue suggesting that the cancer cells produce lactic acid toxic which 
alter the microenvironment of the normal tissue, generating space for tumor cells to proliferate and invade the surrounding tissue. Perumpanani and Byrne \cite{Perumpanani} suggest that  other two mechanisms for invasion of the surrounding tissue are the proteases and the haptotactic movement of the cancer cells. The proteases production depends on the tumor cell density and the collagen gel concentration \cite{Tao}. In Anderson {\it et al.} \cite{Anderson}, the authors presented two mathematical models to describe the invasion of extracellular matrix by tumour cells. These models consider the interaction between three variables, namely, the extracellular matrix, the tumour cells and the matrix degrading enzymes. The first model focuses on the macro-scale structure (cell population level) and consider the tumour as a single mass; meanwhile, the second model focusses on the micro-scale (individual cell) level, which uses a discrete technique  to model the migration and invasion at the level of individual cells, in order to examine the implications of the metastatic spread. Anderson and Chaplain \cite{Chaplain} also proposed a mathematical model to describe the interactions between the tumor and the surrounding tissue. The cancer cells produce degrading enzymes to debase the extracellular matrix and originates the movement. An extension of the model proposed in \cite{Chaplain} was presented by Chaplain and Lolas \cite{Chaplain1}. Lachowicz \cite{Lachowicz} also proposed two mathematical models of tissue invasion of tumours, which are defined at micro and meso-scale levels of description. Mathematical relationships among these possible descriptions are formulated.
More recently, several models have incorporated new ingredients in the modelling of tumour invasion, including the cell-cell adhesion and cell-matrix adhesion \cite{Chaplain2,Gerisch}, the competition for space \cite{Szymanska}, the influence of heat shock proteins \cite{Szymanska1}, and so on.\\

The generic mathematical model proposed by Anderson {\it et al} \cite{Anderson} to describe the interaction between the cancer cell density (denoted by $u$), the extracellular matrix protein density (denoted by $v$), and the degrading enzyme concentration (denoted by $m$), is given by the following system of differential equations:
\begin{equation}\label{KNS0}
\left\{
\begin{array}{lc}
\partial_{t} u=D_{u}\Delta {u}-\nabla\cdot(\chi(v)u\nabla v) + F_1(u,v,m),&\\
\partial_{t}v=F_2(v,m),  &\\
\partial_{t}m=D_{m}\Delta m+F_3(u,v,m),&
\end{array}
\right.
\end{equation}
in $\Omega\times (0,T),$ $\Omega\subset\mathbb{R}^d,$ $d\leq 3,$ and $0<T\leq \infty.$  The parameters $D_{u}, D_{m}$ represent the diffusion coefficients of the cancer cells and the degrading enzyme,  and the nonlinear term  $-\nabla \cdot(\chi (v)u \nabla v)$ represents diffusion by haptotaxis. The function $\chi(\cdot)$ is called the sensitivity function which describes the sensitivity of the cancer cells to the gradient of the extracellular matrix, meanwhile functions $F_1,F_2$ and $F_3$ represent possible interactions between the variables. Depending on the kind of interaction between $u,v,$ and $m,$ several submodels of (\ref{KNS0}) have been considered recently; although these models are simplifications of real biophysical context, their solutions display complex dynamics and their mathematical analysis is challenging. In this sense, an interesting particular case of the generic model (\ref{KNS0}) is given by the following system  \cite{Marciniak}:
\begin{equation}\label{KNS}
\left\{
\begin{array}{lc}
\partial_{t} u=D_{u}\Delta {u}-\nabla\cdot(\chi(v)u\nabla v) + \mu_{u}u(1-u-v),&\\
\partial_{t}v=-\alpha mv,  &\\
\partial_{t}m=D_{m}\Delta m-\rho_{m}m+ \mu_{m}uv.&
\end{array}
\right.
\end{equation}
The term $\mu_{u}u(1-u-v)$ represents the proliferation of cancer cells which follows a logistic growth law accounting for the competition for space; the term $-\rho_{m}m+\mu_{m}uv$ indicates that there exists an interaction between cancer cells and the extracellular matrix in the production of degrading enzimes,  and a self-degradation of enzymes,
with some proportionality rates $\mu_{m}\geq 0$ y $\rho_{m}\geq 0$, 
respectively. Finally, the ODE (\ref{KNS})$_2$ describes the dynamic of the extracellular matrix; it is assumed that the extracellular matrix is degraded upon contact with the  degrading enzyme secreted by the cancer cells at the rate $\alpha>0,$ 
and  there is no spatial transport of the extracellular matrix.
System (\ref{KNS}) is completed with the following initial and boundary data:
\begin{equation}\label{initialdata}
\left\{
\begin{array}{lc}
\left[u(0,x),v(0,x),m(0,x)\right]=\left[u_0(x),v_0(x),m_0(x)\right],\ x\in\Omega,\\[.3cm]
D_u\frac{\partial u(x,t)}{\partial \boldsymbol{\nu}}-\chi(v)u\frac{\partial v(x,t)}{\partial \boldsymbol{\nu}}=D_m\frac{\partial m(x,t)}{\partial \boldsymbol{\nu}}=0, \quad x \in\partial\Omega,\quad t\in(0,T),
\end{array}
\right.
\end{equation}
where $\boldsymbol{\nu}$ denotes the unit outward normal vector to the boundary. A particular case of system (\ref{KNS0}) obtained considering linear kinetics of the extracellular matrix, that is, $F_3=-\rho_{m}m+ \mu_{m}v$, the sensitivity function being a positive constant, and considering only spatial transport of cancer cells, that is, $F_1=0$, was analyzed by Morales-Rodrigo in \cite{Rodrigo}. There, by using the Schauder fixed point theorem, were proved the existence and uniqueness of local solutions in the class of H\"older spaces. A simplified system of (\ref{KNS})-(\ref{initialdata}) of two equations with $F_1=0$ was also considered by Corrias, Perthame and Zaag \cite{Corrias}. They analyzed the existence of global solutions in the framework of $L^p$-spaces, under smallness of initial data. Szyma\'nska {\it et al.} \cite{Szymanska} considered a model with nonlocal (integral) cell kinetics, and proved the existence of global solutions without imposing any smallness conditions on the initial data. The complete system (\ref{KNS})-(\ref{initialdata}) was analyzed by Marciniak-Czochra and Ptashnyk in \cite{Marciniak}. The authors proved the existence and uniform boundedness of global solutions by showing a priori estimates for the supremum norm and using the method of bounded invariant rectangles applied to the reformulated system in divergence form with a diagonal diffusion matrix.\\

Although the qualitative analysis of (\ref{KNS})-(\ref{initialdata}) is quite aceptable, from a numerical point of view there is a significant gap. Indeed, as far as we know, the literature related to the numerical analysis of haptotaxis systems is scarce. We only known some numerical simulations in order to investigate the pattern formation and predict numerically the nonlinear dynamic of the some haptotaxis systems, 
mainly focused on the one-dimensional case, see for instance \cite{Anderson,Chaplain2,Gatenby,Hillen,Marciniak,Perumpanani,Szymanska1,Zhigun}.\\ 

Taking into account the lack of numerical analysis to approximate the solutions of haptotaxis models, the aim of this paper is to propose two numerical schemes to approximate the strong solutions of (\ref{KNS})-(\ref{initialdata}), and develop the underlying numerical analysis. The  main difficulties to deal with the numerical analysis of (\ref{KNS})-(\ref{initialdata}) come from the strong coupling nonlinear  term $-\nabla\cdot(\chi(v)u\nabla v).$ Indeed, it is not clear how to perform a convergence order analysis in a FE scheme based  on the classical variational formulation, since using the $v-$equation (\ref{KNS})$_2$, it is not possible to control this nonlinear term. Thus, in order to overcome this difficulty, we use a splitting technique to deal with the 
haptotaxis term in the cancer cell density equation, leading to introduce an equivalent system with a new variable given by the gradient of extracellular matrix. This idea allows us to propose a first fully discrete numerical scheme based on the Finite Element (FE) method, which is well-posed and preserves the  non-negativity of the discrete variables of the extracellular matrix and the degrading enzyme. For this scheme we analyze error estimates and convergence towards regular solutions. On the other hand, 
based on an equivalent formulation proposed in \cite{Marciniak} to prove the existence and boundedness of global solutions, in which the density of cancer cell density equation 
is expressed in a divergence form through a suitable change of variables, we propose a second numerical scheme which is well-posed and preserve the non-negativity for all the discrete variables. Some numerical simulations valide the theoretical analysis and show that, in general, both numerical schemes have a similar behavior. As far as we know, this paper is almost the only existing one dedicated to the analysis of numerical schemes for this haptotaxis problem.\\ 

The layout of this paper is as follows: In Section 2, we recall some existence and uniqueness results of (\ref{KNS})-(\ref{initialdata}) in the continuous case. We also define an equivalent formulation of (\ref{KNS})-(\ref{initialdata}), which will be used to construct the first numerical approximation. In Section 3, we define the first numerical scheme for approximating the solutions of (\ref{KNS})-(\ref{initialdata}). That numerical scheme is constructed by using FE approximations in space and finite differences in time; we first develop some preliminaries, and establish the properties of well-posedness and positivity. We end Section 3 establishing the second numerical scheme which is motivated by the equivalent weak formulation given in \cite{Marciniak}, which behaves well from the point of view of the positivity for the discrete cancer cell density. 
In Section 4, we obtain some uniform estimates and subsequently we develop the convergence analysis. In Section 5, we provide some numerical simulations in agreement with the {theoretical} results. 

{
\section{The continuous problem}
In this section, we establish a variational formulation  of (\ref{KNS})-(\ref{initialdata}) and two equivalent formulations of (\ref{KNS})-(\ref{initialdata}) which will be used to construct the numerical schemes. After establishing the definition of weak solution, we recall  some existence results for $d$-dimensional bounded domains ($d\leq 3$) obtained in \cite{Marciniak}. We start recalling some basic notations that will be used through this paper.  We use the standard Sobolev and Lebesgue spaces $W^{k,p}(\Omega)$ and $L^p(\Omega),$ with respective norms $\Vert \cdot\Vert_{W^{k,p}}$ and $\Vert \cdot\Vert_{L^p}.$ In particular, we denote $W^{k,2}(\Omega)=H^k(\Omega).$ The $L^2(\Omega)$-inner product will be represented by $(\cdot,\cdot).$ Corresponding Sobolev spaces of vector valued functions will be denoted by ${\bf W}^{k,p} (\Omega),$ ${\bf L}^{p} (\Omega),$ and so on.  
It is important to mention that the letters $C,C_i,K_i$ will denote different positive
constants independent of discrete parameters which may change from line to line (or even within the same line). Now we are in position to recall the definition of weak solution of (\ref{KNS})-(\ref{initialdata}). From now on, $\Omega$ is a bounded domain of $\mathbb{R}^d,$ $d\leq 3,$ with boundary $\partial\Omega$ smooth enough. 
\begin{defi}\label{weak1}{\bf (Weak solution of (\ref{KNS})-(\ref{initialdata}))}
	A weak solution of (\ref{KNS})-(\ref{initialdata}) is a triple $[u,v,m]$ of functions satisfying $u, v, m \in L^{2}(0,T;H^{1}(\Omega))$, $u, v \in L^{\infty}(0,T;L^\infty(\Omega))$ and $u_{t}, v_{t}, m_{t} \in L^{2}(0,T;L^2(\Omega))$ such that
	\begin{flalign*}
	& \ \  \int_{0}^{T}\int_{\Omega}(\partial_t u\varphi_{1}+D_{u}\nabla u\cdot\nabla\varphi_{1}-\chi(v)u\nabla v\cdot \nabla \varphi_{1})dxdt=\mu_{u}\int_{0}^{T}\int_{\Omega} u(1-u-v)\varphi_{1}dxdt,\\
	& \ \ \int_{0}^{T}\int_{\Omega}(\partial_tv\varphi_{2}+\alpha mv \varphi_{2})dxdt=0,  \\
	& \ \  \int_{0}^{T}\int_{\Omega}(\partial_tm\varphi_{3}+D_{m}\nabla m\cdot\nabla \varphi_{3} +\rho_{m}m \varphi_{3})dxdt=\mu_{m}\int_{0}^{T}\int_{\Omega}uv\varphi_{3} dxdt,
	\end{flalign*}
for all $\varphi_{1} \in L^{2}(0,T;H^{1}(\Omega))$, $\varphi_{2} \in L^{2}(0,T;L^2(\Omega))$, $\varphi_{3} \in L^{2}(0,T;H^{1}(\Omega))$ and $u,v,m$ satisfy initial conditions (\ref{initialdata}), i.e. $u\rightarrow u_{0}, v\rightarrow v_{0}, m \rightarrow m_{0}$ in $L^{2}(\Omega)$ as $t\rightarrow 0$.
\end{defi}
In order to get the existence and boundedness of global weak solutions of {(\ref{KNS})}-{(\ref{initialdata})}, in {\cite{Marciniak} (see also \cite{Corrias}), the authors considered an equivalent system where the first equation in {(\ref{KNS})} is expressed in a divergence form. Explicitly, defining the auxiliary variable $s=\frac{u}{\phi(v)},$ where $\phi(v)=\exp(\frac{1}{D_{u}}\int_{0}^{v}\chi(v')dv'),$ the system {(\ref{KNS})} is rewritten as follows: 
\begin{equation}\label{Chemoweak3}
\begin{split}
\left \{
\begin{array}{lccl}
\phi(v)\partial_{t}s=D_{u}\nabla\cdot(\phi(v)\nabla s)+s\phi(v)\left(\alpha\frac{\chi (v)}{D_{u}}vm+ \mu_{u}-\mu_{u}s\phi(v)-\mu{_u}v\right), \\
\partial_{t}v=-\alpha mv,\\ 
\partial_{t}m=D_{m}\Delta m-\rho_{m}m+\mu_{m}s\phi(v)v,
\end{array}
\right.
\end{split}
\end{equation}
with initial and boundary conditions

\begin{equation}\label{initialdata3}
\left\{
\begin{array}{lc}
\left[s(0,x),v(0,x),m(0,x)\right]=\left[s_0(x)=\frac{u_{0}(x)}{\phi(v_{0}(x))},v_0(x),m_0(x)\right],\ x\in\Omega,\\[.3cm]
D_u\frac{\phi(v)\partial s(x,t)}{\partial \boldsymbol{\nu}}=D_m\frac{\partial m(x,t)}{\partial \boldsymbol{\nu}}=0, \quad x \in\partial\Omega,\quad t\in(0,T).
\end{array}
\right.
\end{equation}
The notion of weak solution considered for (\ref{Chemoweak3})-(\ref{initialdata3}) is the following one:}

\begin{defi}\label{weak2}{\bf (Weak solution of (\ref{Chemoweak3})-(\ref{initialdata3}))} 
	The triple $(s,v,m)$ is called a weak solution of the model (\ref{Chemoweak3})-(\ref{initialdata3}) if $s, v, m \in L^{2}(0,T;H^{1}(\Omega))$, $v \in L^{\infty}(0,T; L^\infty(\Omega))$ and $s_{t}, v_{t}, m_{t} \in L^{2}(0,T;L^2(\Omega))$ such that
	\begin{flalign*}
	& \ \  \int_{0}^{T}\int_{\Omega} (\phi(v)\partial_{t}s\varphi_{1}+D_{u}\phi(v)\nabla s\cdot \nabla \varphi_{1}) dxdt  \nonumber\\
	& \ \ \hspace{1cm} =\frac{\alpha}{D_{u}}\int_{0}^{T}\int_{\Omega} \chi(v)s\phi(v)vm\varphi_{1} dxdt+\mu_{u}\int_{0}^{T}\int_{\Omega}s\phi(v)(1-s\phi(v)-v)\varphi_{1} dxdt, \\ 
	& \ \ \int_{0}^{T}\int_{\Omega}(\partial_{t}v\varphi_{2}+\alpha mv\varphi_{2})dxdt=0,  \\
	& \ \  \int_{0}^{T}\int_{\Omega}(\partial_{t}m\varphi_{3}+D_{m}\nabla m\cdot\nabla\varphi_{3}+\rho_{m}m\varphi_{3})dxdt=\mu_{m}\int_{0}^{T}\int_{\Omega}s\phi(v)v\varphi_{3} dxdt, 
	\end{flalign*}
for all $\varphi_{1} \in L^{2}(0,T;H^{1}(\Omega))$, $\varphi_{2} \in L^{2}(0,T;L^2(\Omega))$, $\varphi_{3} \in L^{2}(0,T;H^{1}(\Omega)) $ and $s,v,m$ satisfy initial conditions (\ref{initialdata3}),{ i.e. $s\rightarrow s_{0}, v\rightarrow v_{0}, m \rightarrow m_{0}$ in $L^{2}(\Omega)$ as $t\rightarrow 0$.} 
\end{defi}
As pointed out in \cite{Marciniak}, if $u,v\in L^\infty(0,T;L^\infty(\Omega)),$ the existence of weak solutions solutions of system of {(\ref{Chemoweak3})}-{(\ref{initialdata3})} (in the sense of Definition \ref{weak2}) is equivalent to the existence of weak solutions of {(\ref{KNS})}-{(\ref{initialdata})} (in the sense of Definition \ref{weak1}).
In \cite{Marciniak}, by using the Schauder fixed point theorem, the following local existence of solutions of {(\ref{Chemoweak3})}-{(\ref{initialdata3})} was proved.
\begin{theo}\label{exis1}(\cite[ Theorem 3.1.]{Marciniak})
{For $s_{0} \geq 0$, $m_{0} \geq 0$, $v_{0} \geq 0$, $s_{0}$, $v_{0}$, $m_{0} \in H^{1}(\Omega)$, $v_{0} \in L^{\infty}(\Omega)$ and a continuous and positive $\chi$, there exists a local in time, non-negative weak solution of system {(\ref{Chemoweak3})}-{(\ref{initialdata3})} in the sense of Definition \ref{weak2}}
\end{theo}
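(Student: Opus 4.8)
The plan is to construct the solution by a Schauder fixed-point argument applied to a decoupled (linearized) version of (\ref{Chemoweak3}). First I would fix a time horizon $T>0$ (to be chosen small) and work in the Banach space $X=L^{2}(0,T;L^{2}(\Omega))^{3}$, setting $M_{0}=\|v_{0}\|_{L^{\infty}}$. On the convex, closed, bounded set
\[
K=\{(\bar s,\bar v,\bar m)\in X:\ \bar s,\bar v,\bar m\ge 0,\ \ 0\le\bar v\le M_{0},\ \ \|(\bar s,\bar v,\bar m)\|_{L^{2}(0,T;H^{1})}\le R\},
\]
for a radius $R$ to be fixed, I would define a map $\mathcal{T}(\bar s,\bar v,\bar m)=(s,v,m)$ through three decoupled problems solved in order. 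The variable $v$ is obtained by integrating the ODE (\ref{Chemoweak3})$_{2}$ explicitly, $v(t,x)=v_{0}(x)\exp(-\alpha\int_{0}^{t}\bar m(\tau,x)\,d\tau)$, which, since $\bar m\ge0$ and $v_{0}\ge0$, immediately yields $0\le v\le M_{0}$. Then $m$ is defined as the solution of the linear parabolic problem $\partial_{t}m-D_{m}\Delta m+\rho_{m}m=\mu_{m}\bar s\,\phi(\bar v)\,\bar v$ with homogeneous Neumann data and $m(0)=m_{0}$, and $s$ as the solution of the linear, weighted parabolic problem $\phi(\bar v)\partial_{t}s-D_{u}\nabla\cdot(\phi(\bar v)\nabla s)=\bar s\,\phi(\bar v)(\ldots)$ with the weighted Neumann condition of (\ref{initialdata3}) and $s(0)=s_{0}$, the reaction bracket being evaluated at the barred quantities.

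Two structural facts make this work. Since $0\le\bar v\le M_{0}$ and $\chi$ is continuous and positive, the weight $\phi(\bar v)=\exp(\frac{1}{D_{u}}\int_{0}^{\bar v}\chi)$ is bounded between $1$ and $\exp(\frac{M_{0}}{D_{u}}\max_{[0,M_{0}]}\chi)$, so the $s$-problem is uniformly elliptic with bounded coefficients; and because every source term is built from non-negative data, the weak maximum principle gives $m\ge0$ and $s\ge0$, so that $\mathcal{T}$ indeed maps into the non-negative cone. Next I would establish the a priori estimates that force $\mathcal{T}(K)\subseteq K$ for $T$ small (and $R$ suitably large) and that yield compactness. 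Testing the $m$- and $s$-equations against $m,\partial_{t}m$ and $s,\partial_{t}s$ respectively, and using $\partial_{t}v=-\alpha\bar m v$ together with the two-sided bounds on $\phi(\bar v)$, I would obtain uniform bounds for $(s,v,m)$ in $L^{\infty}(0,T;H^{1})$ and for $(\partial_{t}s,\partial_{t}v,\partial_{t}m)$ in $L^{2}(0,T;L^{2})$; the smallness of $T$ is what absorbs the quadratic source $\mu_{u}\bar s^{2}\phi(\bar v)^{2}$ and closes the estimate inside the ball of radius $R$.

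These bounds, through the Aubin--Lions--Simon compactness lemma, make $\mathcal{T}$ a compact operator on $X$. Continuity of $\mathcal{T}$ in the $X$-topology follows from the linearity of the $m$- and $s$-problems, the uniform coefficient bounds, and the continuity of the Nemytskii maps $\bar v\mapsto\phi(\bar v),\chi(\bar v)$: if $\bar v_{n}\to\bar v$ in $L^{2}(0,T;L^{2})$ with the uniform bound $0\le\bar v_{n}\le M_{0}$, then $\phi(\bar v_{n})\to\phi(\bar v)$ a.e.\ along a subsequence and boundedly, so dominated convergence gives convergence in every $L^{p}$, $p<\infty$. Schauder's fixed-point theorem then yields $(s,v,m)\in K$ with $\mathcal{T}(s,v,m)=(s,v,m)$, i.e.\ a non-negative weak solution in the sense of Definition \ref{weak2}; attainment of the initial data follows from $s,v,m\in C([0,T];L^{2}(\Omega))$, a consequence of the regularity $L^{2}(0,T;H^{1})$ with time derivative in $L^{2}(0,T;L^{2})$.

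The main obstacle I anticipate is the $s$-equation, where the weight $\phi(\bar v)$ multiplies both the time derivative and the divergence-form diffusion while the reaction term is quadratic in $s$. Deriving the bound $\partial_{t}s\in L^{2}(0,T;L^{2})$ requires handling $\int\phi(\bar v)|\partial_{t}s|^{2}$ and controlling the term $\int\partial_{t}\phi(\bar v)|\nabla s|^{2}$ that appears when differentiating the weighted energy $\int\phi(\bar v)|\nabla s|^{2}$ in time; here one uses $\partial_{t}\phi(\bar v)=\frac{\chi(\bar v)}{D_{u}}\phi(\bar v)\,\partial_{t}\bar v=-\alpha\frac{\chi(\bar v)}{D_{u}}\phi(\bar v)\bar m\bar v$, so that the bound on $\bar m$ and the ODE structure $\partial_{t}v=-\alpha\bar m v$ are precisely what tame the circular coupling $v\to m\to s\to v$ through the linearization. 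Proving the continuity of the solution map through this weighted, quadratically forced problem, rather than the mere existence of a fixed point, is the most delicate technical point.
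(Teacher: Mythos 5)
This theorem is not proved in the paper at all: it is quoted verbatim from \cite[Theorem 3.1]{Marciniak}, and the paper only records that the proof there proceeds ``by using the Schauder fixed point theorem.'' Your overall strategy (decouple, solve $v$ explicitly, then $m$, then the weighted $s$-problem, derive a priori bounds, and close with Aubin--Lions plus Schauder) is therefore the same strategy as the cited source, and most of your outline is reasonable.

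There is, however, one genuine gap in the step you rely on most: the non-negativity of $s$. You evaluate the entire reaction bracket at the barred quantities, so the source of your linear $s$-problem is
$\bar s\,\phi(\bar v)\bigl(\alpha\frac{\chi(\bar v)}{D_u}\bar v\bar m+\mu_u\bigr)-\mu_u\bar s^{2}\phi(\bar v)^{2}-\mu_u\bar s\,\phi(\bar v)\bar v$,
which is \emph{not} non-negative (the last two terms are $\le 0$), and it is independent of the unknown $s$, so it does not vanish on the set where $s=0$. The weak maximum principle therefore does not give $s\ge 0$ as you claim; in fact for large $\bar s$ the source is strictly negative and $s$ can become negative on short times. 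The standard repair is to keep the damping terms semi-implicit, i.e.\ linearize the reaction as $\bar s\,\phi(\bar v)\bigl(\alpha\frac{\chi(\bar v)}{D_u}\bar v\bar m+\mu_u\bigr)-s\,\phi(\bar v)\bigl(\mu_u\bar s\,\phi(\bar v)+\mu_u\bar v\bigr)$, so that testing with $s_-=\min\{s,0\}$ makes the negative contributions come with the favourable sign $-\int\mu_u\phi(\bar v)(\bar s\phi(\bar v)+\bar v)\,s_-^{2}\le 0$; this is exactly the structure the present paper adopts at the discrete level in scheme \textbf{UVMs}, equation (\ref{schemes})$_1$, where $s^{n}_h$ (not $s^{n-1}_h$) multiplies the two sink terms. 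A secondary point you should also address: your set $K$ only carries an $L^{2}(0,T;H^{1})$ bound on $\bar s$, under which the quadratic source $\bar s^{2}\phi(\bar v)^{2}$ need not lie in $L^{2}(0,T;L^{2})$ (nor even in a space where standard linear parabolic theory and the continuity of the Nemytskii map are available); you need to build an $L^{\infty}(0,T;L^{2})$ (or $L^{\infty}$) bound into $K$, or truncate the nonlinearity, before the compactness and continuity of $\mathcal{T}$ can be justified.
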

Next, by using the ``bounded invariant rectangles'' method, the following global existence and boundedness theorem for {(\ref{KNS})}-{(\ref{initialdata})} was proved in \cite{Marciniak}. 
\begin{theo}\label{exis2}(\cite[Theorems 3.2 and 3.3.]{Marciniak})
{For non-negative and bounded initial data $u_{0}$, $v_{0}$, $m_{0} \in H^{1}(\Omega)$, and a continuous and positive function $\chi$, there exists a global solution of system {(\ref{KNS})}-{(\ref{initialdata})}, in the sense of Definition \ref{weak1}, and it is uniformly bounded. In addition, if $\chi$ is locally Lipschitz-continuous, and the initial data satisfy $u_0,v_0,m_0\in L^\infty(\Omega),$ $\nabla v_0, \nabla m_0\in L^q(\Omega),$ $q\geq d,$ $\nabla u_0\in L^2(\Omega),$ the weak solution is unique, and $v\in L^\infty(0,T;W^{1,q}(\Omega)),$ $m\in L^q(0,T;W^{1,q}(\Omega)).$}
\end{theo}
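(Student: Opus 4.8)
The plan is to obtain the global solution by combining the local existence furnished by Theorem \ref{exis1} with uniform-in-time a priori bounds, and then to treat uniqueness and the extra regularity separately. First I would work throughout with the equivalent divergence-form system (\ref{Chemoweak3})-(\ref{initialdata3}), because the absence there of the haptotaxis flux $-\nabla\cdot(\chi(v)u\nabla v)$ makes maximum-principle and comparison arguments available. The decisive point is to produce a \emph{bounded invariant region} for $[s,v,m]$ (equivalently for $[u,v,m]$ with $u=s\phi(v)$) whose size is independent of the existence time; a standard continuation argument then upgrades the local solution to a global one and simultaneously yields the asserted uniform boundedness.

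For the invariant region I would argue componentwise. Non-negativity of $v$ is immediate from $\partial_t v=-\alpha m v$, which integrates to $v(t)=v_0\exp(-\alpha\int_0^t m\,d\tau)$, and this same identity gives $0\le v\le\|v_0\|_{L^\infty}$ as soon as $m\ge0$. Non-negativity of $m$ follows from the maximum principle applied to (\ref{Chemoweak3})$_3$, whose zeroth-order reaction $-\rho_m m$ and source $\mu_m s\phi(v)v\ge0$ are sign-compatible, and an $L^\infty$ bound for $m$ then comes from comparison with the spatially homogeneous ODE once $s\phi(v)v$ is controlled. The delicate component is the upper bound for $u=s\phi(v)$: here the logistic structure $\mu_u u(1-u-v)$ supplies the restoring mechanism, since the reaction becomes negative for $u$ large, so that a box $\{0\le u\le U,\ 0\le v\le\|v_0\|_{L^\infty},\ 0\le m\le M\}$ can be rendered invariant for suitable $U,M$ by checking that the reaction field points inward on each face (the Chueh-Conley-Smoller condition), which is legitimate because the diffusion acts diagonally on $(s,m)$ and $v$ carries no diffusion.

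For uniqueness under the stronger hypotheses I would take two solutions with the stated data, subtract the equations, and run energy estimates on the differences $[\bar u,\bar v,\bar m]$. The only term resisting a direct Gronwall estimate is the haptotaxis difference, which splits into a part involving $\chi(v_1)-\chi(v_2)$ and a part involving $\nabla v_1-\nabla v_2$; the local Lipschitz continuity of $\chi$ controls the first, while the additional regularity $v\in L^\infty(0,T;W^{1,q}(\Omega))$ with $q\ge d$ lets me close the second through H\"older and Sobolev embeddings. Absorbing the resulting terms and applying Gronwall's inequality forces $\bar u=\bar v=\bar m=0$.

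Finally, the extra regularity is obtained by bootstrapping. Differentiating $\partial_t v=-\alpha m v$ in space yields the linear identity $\partial_t\nabla v=-\alpha(m\nabla v+v\nabla m)$, so $\|\nabla v\|_{L^q}$ is controlled by Gronwall provided $\nabla v_0\in L^q(\Omega)$ and $m\in L^q(0,T;W^{1,q}(\Omega))$; the latter follows by applying maximal parabolic $L^q$-regularity to the linear equation (\ref{Chemoweak3})$_3$, whose source $\mu_m uv$ is bounded once $u,v\in L^\infty$. I expect the main obstacle to be the uniform $L^\infty$ bound on $u$: it is precisely the coupling through $\chi(v)$ and the positive sign of the cross term $\alpha\frac{\chi(v)}{D_u}vm$ in (\ref{Chemoweak3})$_1$ that must be dominated by the logistic dissipation for the invariant rectangle to exist, so that verifying the inward-pointing condition on the face $u=U$ is the technically decisive step.
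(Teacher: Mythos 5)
You should first note that the paper does not prove this statement at all: Theorem \ref{exis2} is imported verbatim from \cite[Theorems 3.2 and 3.3]{Marciniak}, and the only methodological indication the paper gives is that the result was obtained there ``by showing a priori estimates for the supremum norm and using the method of bounded invariant rectangles applied to the reformulated system in divergence form.'' Your proposal reconstructs exactly that strategy --- passing to (\ref{Chemoweak3})--(\ref{initialdata3}), componentwise sign preservation, an invariant box closed by the logistic dissipation, then energy-method uniqueness and a bootstrap for the extra regularity --- so at the level of architecture you are aligned with the cited source, and your treatment of $v$ (explicit exponential formula), of non-negativity of $m$, of uniqueness, and of the $W^{1,q}$ regularity of $v$ and $m$ is sound.

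The genuine gap is in the step you yourself call decisive, and it is not merely technical. To make the face $\{u=U\}$ inward-pointing you need $\alpha\frac{\chi(v)}{D_u}vm+\mu_u\leq \mu_u U$, i.e.\ $U\gtrsim 1+c_1 M$; to make the face $\{m=M\}$ inward-pointing you need $-\rho_m M+\mu_m U\phi(\|v_0\|_{L^\infty})\|v_0\|_{L^\infty}\leq 0$, i.e.\ $M\gtrsim c_2 U$ (and this already fails outright when $\rho_m=0$, a case the paper explicitly allows and uses in its simulations). Substituting one condition into the other gives $U\gtrsim 1+c_1c_2U$, which admits a solution only when $c_1c_2<1$, i.e.\ only for restricted parameter ranges. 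So a single Chueh--Conley--Smoller rectangle does not close by itself; the circularity between the bound on $m$ and the bound on $u$ must be broken \emph{before} the invariant-rectangle argument, which is why the cited work pairs it with separate a priori supremum-norm estimates (an $L^1$ bound on $u$ from the logistic term, integral estimates and heat-semigroup/Moser-type smoothing for $m$, and the monotone decay of $v$). Your sketch should either establish those sup-norm estimates first and then use the rectangle only for the sign conditions, or replace the rectangle on the $(u,m)$ faces by a time-local comparison/continuation argument on $[0,T]$; as written, ``verifying the inward-pointing condition on the face $u=U$'' cannot be completed.
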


For regular initial data, the weak solutions are regular; more exactly, the following regularity result can be established.
\begin{theo}\label{TRG}
Under the assumptions of Theorem \ref{exis2}, if $\chi\in C^1$ and $u_0,m_0,v_0\in C^2(\overline{\Omega}),$ then the weak solution provided by Theorem \ref{exis1} is classical. 
\end{theo}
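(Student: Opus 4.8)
The plan is to bootstrap the regularity of the already global, bounded and nonnegative weak solution by a staged argument: first upgrade to Hölder continuity using parabolic $L^p$ estimates, then to full $C^{2,1}$ regularity using Schauder theory. The key is the triangular structure of the system: the $m$-equation is a genuine parabolic equation whose source depends only on $u,v$; the $v$-equation is an ODE whose solution, and in particular whose spatial derivatives, are completely determined by $m$ and $v_0$; and the $u$-equation is parabolic with coefficients built from $v$. Throughout I use that, by Theorem \ref{exis2}, $u,v,m$ are bounded on $Q_T=\Omega\times(0,T)$, with $v\in L^\infty(0,T;W^{1,q})$ and $m\in L^q(0,T;W^{1,q})$ for $q\ge d$. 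To avoid compatibility conditions I first establish regularity on $\Omega\times(t_0,T)$ for arbitrary $t_0>0$ (the smoothing of the parabolic flow), the behaviour up to $t=0$ following from compatibility of the $C^2(\overline{\Omega})$ data with the Neumann conditions.

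\textbf{The $L^p$ pass.} For $m$, the equation $\partial_t m-D_m\Delta m+\rho_m m=\mu_m uv$ with homogeneous Neumann condition has right-hand side in $L^\infty(Q_T)\subset L^p(Q_T)$ since $u,v$ are bounded, so maximal $L^p$ regularity yields $m\in W^{2,1}_p(Q_T)$ for every finite $p$; taking $p$ large and using $W^{2,1}_p\hookrightarrow C^{1+\beta,(1+\beta)/2}(\overline{Q_T})$ gives $m,\nabla m\in C^{\beta,\beta/2}$. Integrating the ODE, $v(x,t)=v_0(x)\exp(-\alpha M(x,t))$ with $M(x,t)=\int_0^t m(x,\tau)\,d\tau$; since $\nabla M=\int_0^t\nabla m\,d\tau$ is Hölder and $v_0\in C^2(\overline{\Omega})$, the field $v$ is bounded with $\nabla v\in C^{\beta,\beta/2}$, so the drift $\chi(v)\nabla v$ is now bounded (it was only in $L^q$ before). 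Passing to the divergence form (\ref{Chemoweak3}), the variable $s=u/\phi(v)$ solves, after division by $\phi(v)$, a linear parabolic equation $\partial_t s-D_u\Delta s-\chi(v)\nabla v\cdot\nabla s+c\,s=0$ whose drift is bounded and Hölder and whose zeroth-order coefficient $c$ is bounded (built from the bounded quantities $v,m,s,\phi(v)$, using $s=u/\phi(v)$ bounded). Standard $L^p$ estimates then give $s\in W^{2,1}_p(Q_T)$, hence $s$ and $u=s\phi(v)$ are Hölder continuous.

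\textbf{The Schauder pass.} Now $u,v$ are Hölder, so the source $\mu_m uv$ of the $m$-equation lies in $C^{\beta,\beta/2}$, and parabolic Schauder estimates upgrade $m$ to $C^{2+\beta,1+\beta/2}(\overline{Q_T})$. Returning to $v=v_0e^{-\alpha M}$, now $M\in C^{2,1}$ (as $m\in C^{2,1}$) and $v_0\in C^2$ give $v\in C^{2,1}(\overline{Q_T})$; in particular $\nabla v$ and $\Delta v$ are Hölder. Finally, expanding the haptotaxis term, the $u$-equation reads $\partial_t u-D_u\Delta u+\chi(v)\nabla v\cdot\nabla u+[\chi'(v)|\nabla v|^2+\chi(v)\Delta v]\,u=\mu_u u(1-u-v)$, a linear parabolic equation with $C^\beta$ coefficients (here the hypothesis $\chi\in C^1$ is used, for both $\chi(v)$ and $\chi'(v)$), so Schauder estimates give $u\in C^{2+\beta,1+\beta/2}$. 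Thus $u,v,m\in C^{2,1}$ satisfy the system pointwise, i.e. the solution is classical.

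The hard part is precisely the coupling between the parabolic $u$- and $m$-equations through the non-diffusing field $v$: because $v$ inherits \emph{all} of its spatial regularity from $m$ via the ODE, one cannot treat the $u$-equation classically — which needs $\nabla v$ and $\Delta v$ continuous — until $m$ has itself been pushed to $C^{2,1}$, and that in turn requires $u,v$ to be Hölder continuous. Breaking this circularity is what forces the two-stage scheme above: an $L^p$ pass to reach Hölder continuity of every field, followed by a Schauder pass that closes the loop. I expect the only remaining technical points to be checking that each regularization step is compatible with the homogeneous Neumann conditions and, if one wants regularity up to $t=0$, the compatibility of the $C^2(\overline{\Omega})$ initial data.
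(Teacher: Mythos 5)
Your argument is correct in outline and reaches the conclusion by a genuinely different route than the paper. The paper follows Marciniak-Czochra and Ptashnyk's $L^q$-energy bootstrap: it differentiates the $v$-ODE in space (once, then twice) and tests with $\vert v_{x_i}\vert^{q-2}v_{x_i}$, differentiates the $s$- and $m$-equations in time and tests with $\vert \partial_t s\vert^{q-2}\partial_t s$, and climbs through $L^q(W^{2,q})$, $L^\infty(W^{2,q})$ and $\partial_t s,\partial_t m\in L^\infty(L^q)$ before concluding $C([0,T];C^2(\overline{\Omega}))$ from continuity of the right-hand sides. You instead run the textbook two-pass scheme (maximal $L^p$ regularity to reach H\"older continuity of all fields, then Schauder to close), exploiting the same triangular structure $m\to v\to s\to m$ and, crucially, the same two devices the paper uses: the explicit solution $v=v_0e^{-\alpha\int_0^t m}$ of the ODE and the divergence-form $s$-equation whose drift is $\chi(v)\nabla v$. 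Your version is shorter and avoids the chain of Lebesgue exponents $\frac{6q}{3q-4}$, $\frac{6q}{q-4}$, at the price of invoking anisotropic embeddings $W^{2,1}_p\hookrightarrow C^{1+\beta,(1+\beta)/2}$ and oblique-derivative Schauder theory; the paper's version stays within $L^q$ testing and isolates $\Delta s$, $\Delta m$ algebraically.

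Two technical points deserve attention. First, in your final Schauder step you expand the haptotaxis term and work with the $u$-equation, whose zeroth-order coefficient contains $\chi'(v)\vert\nabla v\vert^2$; with only $\chi\in C^1$, the composition $\chi'(v(\cdot,\cdot))$ is continuous but need not be H\"older, so Schauder theory does not directly apply. The paper sidesteps this by performing the last regularity step on the $s$-equation, where only $\chi(v)$ (Lipschitz in $v$, hence H\"older in $(x,t)$) appears and the boundary condition is the homogeneous Neumann one $\partial_{\boldsymbol{\nu}}s=0$ rather than the conormal condition satisfied by $u$; you should do the same and recover $u=\phi(v)s$ at the end, or strengthen the hypothesis to $\chi\in C^{1,\beta}$. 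Second, your remark that regularity up to $t=0$ follows from "compatibility of the $C^2(\overline{\Omega})$ data" is optimistic: $C^2$ data need not satisfy $\partial_{\boldsymbol{\nu}}m_0=0$, and without it one only gets interior-in-time regularity; the paper's statement has the same implicit restriction, so this is not a defect relative to the paper, but it should be stated rather than waved at.
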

\begin{proof}
The proof is essentially in \cite{Marciniak}, Section 5.2. For convenience of the reader, we detail it here. From the regularity of weak solutions provided by Theorem \ref{exis2}, it holds, in particular,  that $\nabla m\in L^q(0,T;L^q(\Omega)).$ Then, differentiating the $v$-equation in (\ref{KNS}) with respect to $x_i,$ $i=1,...,n,$ testing the
obtained equation by $\vert v_{x_i}\vert^{q-2}v_{x_i}$ and integrating in time one gets $v\in L^{\infty}(0,T;W^{1,q}(\Omega))$ (cf. Lemma 5.3 in \cite{Marciniak}); moreover,
\begin{eqnarray*}
\sup_{0\leq t\leq T}\Vert \nabla v(t)\Vert^q_{L^q}\leq C\Vert \nabla v_0\Vert^q_{L^q}+C\Vert v\Vert_{L^\infty(L^\infty)}\int_0^T\int_\Omega\vert \nabla m\vert^q dxdt.
\end{eqnarray*}
From the regularity of weak solutions provided by Theorem \ref{exis2}, it holds, in particular,  that $-\rho_{m}m+ \mu_{m}uv\in L^q(0,T;L^q(\Omega)).$ Thus, from the parabolic regularity applied to the $m$-equation in (\ref{KNS}) (see \cite{Feireisl}, Theorem 10.22, p. 344), one has $m\in L^q(0,T;W^{2,q}(\Omega)).$ 
Differentiating the $v$-equation in (\ref{Chemoweak3}) with respect to $x_i$ and $x_j$ and testing the
obtained equation by $\vert v_{x_ix_j}\vert^{q-2}v_{x_ix_j}$ we can obtain that $v \in L^q(W^{2,q}(\Omega))$ (cf. \cite{Marciniak}, Lemma 5.4), and the following estimate holds:
\begin{eqnarray*}
\sup_{0\leq t\leq T}\Vert D^2v(t)\Vert_{L^q}^q\leq \Vert D^2v_0\Vert_{L^q}^q+C\Vert v\Vert_{L^\infty(L^\infty)}\int_0^T\int_\Omega\left(\vert \nabla m\vert^{2q}+\vert \nabla v\vert^{2q}+\vert D^2 m\vert^{q}\right)dxdt.
\end{eqnarray*}
Since $v\in L^\infty(0,T;W^{2,q}(\Omega))$ and $u\in L^2(0,T;H^1(\Omega)),$ from the equality $s=\frac{u}{\phi(v)},$ one has that $\nabla s\in L^2(0,T;L^2(\Omega)).$ Thus, rewriting  the $s$-equation in (\ref{Chemoweak3}) as
\begin{eqnarray}\label{est10}
\partial_{t}s-D_{u}\Delta s=D_u\frac{\phi'(v)}{\phi(v)}\nabla v\nabla s+s\left(\alpha\frac{\chi (v)}{D_{u}}vm+ \mu_{u}-\mu_{u}s\phi(v)-\mu{_u}v\right),
\end{eqnarray}
one gets that the right hand side of (\ref{est10}) belongs to $L^2(0,T;L^2(\Omega)).$ Consequently, by parabolic regularity (see \cite{Feireisl}, Theorem 10.22, p. 344) one deduces $s\in L^2(0,T;H^2(\Omega))\cap L^\infty(0,T;H^1(\Omega));$ thus, $\nabla s\in L^2(0,T;L^6(\Omega))\cap L^\infty(0,T;L^2(\Omega)).$ By Lebesgue interpolation one gets
$\nabla s\in L^q(0,T;L^{\frac{6q}{3q-4}}(\Omega)).$ Applying parabolic regularity again, one gets $s\in L^q(0,T;W^{2,\frac{6q}{3q-4}}(\Omega)),$ that is, $\nabla s\in L^q(0,T;W^{1,\frac{6q}{3q-4}}(\Omega)).$ If $q\leq 4,$ the last regularity implies, by Sobolev embeddings,  that $\nabla s\in L^q(0,T;L^q(\Omega)).$ Otherwise, if $q>4,$ one has $\nabla s\in L^q(0,T; L^{\frac{6q}{q-4}}(\Omega)).$ Then, applying parabolic regularity once more, one gets $s\in L^q(0,T;W^{2,\frac{6q}{q-4}}(\Omega)),$ that is, $\nabla s\in L^q(0,T;W^{1,\frac{6q}{q-4}}(\Omega)),$ which implies that $\nabla s\in L^q(0,T;L^q(\Omega)).$ Now, differentiating equations (\ref{Chemoweak3})$_1$ with respect to $t,$ testing the obtained equation by $\vert \partial_ts\vert^{q-2}\partial_ts$ and taking into account that $\nabla s\in L^q(0,T;L^q(\Omega)),$ it holds that $\partial_ts\in L^\infty(0,T;L^q(\Omega))$ (cf. \cite{Marciniak}, Lemma 5.5).  Moreover, the following estimate is true
\begin{equation}
\sup_{0\leq t\leq T}\Vert \partial_ts(t)\Vert^q_{L^q}\leq C\left(\Vert [v,m,s]\Vert_{L^\infty(L^\infty)},\Vert s_0\Vert_{W^{2,q}}\right)+ C\Vert [v,m]\Vert_{L^\infty(L^\infty)}\int_0^T\int_\Omega\vert \nabla s\vert^qdxdt.\label{est1}
\end{equation}
Analogously, differentiating (\ref{Chemoweak3})$_3$ with respect to $t,$ testing the obtained equation by $\vert \partial_tm\vert^{q-2}\partial_tm,$ and using (\ref{est1}), it holds that $\partial_tm\in L^\infty(0,T;L^q(\Omega))$ and
\begin{equation}
\sup_{0\leq t\leq T}\Vert \partial_tm(t)\Vert^q_{L^q}\leq C\left(\Vert [v,m,s]\Vert_{L^\infty(L^\infty)},\Vert m_0\Vert_{W^{2,q}}\right)+C\Vert [v,m]\Vert_{L^\infty(L^\infty)}\int_0^T\int_\Omega\vert \partial_ts\vert^qdxdt.\label{est2}
\end{equation} 
Taking into account that $u=\phi(v)s,$ from (\ref{est1}) it holds that $\partial_tu\in L^\infty(0,T;L^q(\Omega)).$ On the other hand, isolating the terms $\Delta m$ and $\Delta s$ in (\ref{Chemoweak3}) and (\ref{est10}) respectively, and using (\ref{est1})-(\ref{est2}), it is straightforward to prove that $s,m\in L^\infty(0,T;W^{2,q}(\Omega))$ (cf. \cite{Marciniak}, Lemma 5.6). Furthermore, the following estimate holds:
\begin{equation*}
\sup_{0\leq t\leq T}\Vert \Delta s(t)\Vert^q_{L^q}\leq C\Vert [v,m,s]\Vert_{L^\infty(L^\infty)}\left( 1+\sup_{0\leq t\leq T}\Vert \nabla v(t)\Vert_{L^{2q}}^{2q}+\sup_{0\leq t\leq T}\Vert \partial_ts(t)\Vert^q_{L^q}\right),
\end{equation*}
\begin{equation*}
\sup_{0\leq t\leq T}\Vert \Delta m(t)\Vert^q_{L^q}\leq C\Vert [v,m,s]\Vert_{L^\infty(L^\infty)}\left( 1+\sup_{0\leq t\leq T}\Vert \partial_tm(t)\Vert^q_{L^q}\right).
\end{equation*}
Since $m,s\in L^\infty(0,T;W^{2,q}(\Omega))$ and $\partial_tm,\partial_ts\in L^\infty(0,T;L^q(\Omega))$, then, in particular one has that $m,s\in C([0,T];C(\overline{\Omega})).$  
Notice that the right hand side of (\ref{Chemoweak3})$_3$ belongs to $C([0,T];C(\overline{\Omega})).$ Thus, since $m_0\in C^{2}(\overline{\Omega}),$ from the parabolic regularity (cf. \cite{Feireisl}, Theorem 10.23.) one gets $m\in C([0,T];C^{2}(\overline{\Omega}))$ and $\partial_tm\in C([0,T];C(\overline{\Omega})).$ Now, since 
$$ D_u\frac{\phi'(v)}{\phi(v)}\nabla v+\left(\alpha\frac{\chi (v)}{D_{u}}vm+ \mu_{u}-\mu_{u}s\phi(v)-\mu{_u}v\right)\in C([0,T];C(\overline{\Omega})),$$
and $s_0\in C^{2}(\overline{\Omega}),$ from the parabolic regularity (cf. \cite{Ladys}) one has $s\in C([0,T];C^{2}(\overline{\Omega}))$ and $\partial_ts\in C([0,T];C(\overline{\Omega})).$ Since $s=\frac{u}{\phi(v)}$ one can conclude that
\begin{eqnarray*}
u\in C([0,T];C^{2}(\overline{\Omega})),\ \partial_{t}u\in C([0,T];C(\overline{\Omega})),
\end{eqnarray*}
which conclude that the global solution is classical.
\end{proof}

A strong difficulty to deal with system {(\ref{KNS})}-{(\ref{initialdata})} comes from the second order nonlinear term in the cancer cell density equation. Thus, in order to control it numerically, following the ideas in \cite{Abel}, we introduce the new variable $\boldsymbol{\sigma}=\nabla v,$ which allows us to propose a FE scheme for which we can analyze convergence rates. Specifically, we consider the following variational formulation:
\begin{equation}\label{split}
\begin{split}
\left \{
\begin{array}{lccl}
 (\partial_{t}m,\bar{m})+D_{m}(\nabla m,\nabla \bar{m})+\rho_{m}(m,\bar{m})=\mu_{m}(uv,\bar{m}),\\
\partial_tv=-\alpha mv, \\
(\partial_{t}u,\bar{u})+D_{u}(\nabla u,\nabla\bar{u}) = (\chi(v)u{\boldsymbol\sigma},\nabla \bar{u}) +\mu_{u}(u - u^2-uv,\bar{u}), \\
(\partial_{t}{\boldsymbol{\sigma}},\bar{\boldsymbol{\sigma}}) + \alpha(m{\boldsymbol{\sigma}},\bar{\boldsymbol{\sigma}})=-\alpha (v\nabla m,\bar{\boldsymbol{\sigma}}), 
\end{array}
\right.
\end{split}
\end{equation}
for all $[\bar{m},\bar{u},\bar{\boldsymbol{\sigma}}]\in H^1(\Omega)\times H^1(\Omega)\times\mathbf{H}^{1}(\Omega)$, where the equation (\ref{split})$_4$ was obtained by applying the gradient operator to equation (\ref{KNS})$_2$. Then, the following result holds:
\begin{lemm}
If a triple $[m,v,u]$ is a classical solution of {(\ref{KNS})}-{(\ref{initialdata})}, then $[m,v,u,{\boldsymbol{\sigma}}]$ is a smooth solution of (\ref{split}). 
Reciprocally, if $[m,v,u,{\boldsymbol{\sigma}}]$ is a smooth solution of (\ref{split}), then $[m,v,u]$ is a classical solution of {(\ref{KNS})}-{(\ref{initialdata})}.
\end{lemm}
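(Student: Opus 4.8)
The plan is to prove the two implications separately. The forward one is a direct derivation: setting $\boldsymbol{\sigma}:=\nabla v$, I would verify each line of (\ref{split}) in turn. Line (\ref{split})$_2$ is literally (\ref{KNS})$_2$. For (\ref{split})$_1$ I would multiply (\ref{KNS})$_3$ by an arbitrary $\bar m\in H^1(\Omega)$, integrate over $\Omega$, and integrate by parts the Laplacian; the boundary integral $D_m\int_{\partial\Omega}\frac{\partial m}{\partial\boldsymbol{\nu}}\bar m\,dS$ vanishes by the Neumann condition in (\ref{initialdata}). Testing (\ref{KNS})$_1$ against $\bar u\in H^1(\Omega)$ and integrating by parts both the diffusion and the haptotaxis term produces the combined boundary term $\int_{\partial\Omega}\bigl(D_u\frac{\partial u}{\partial\boldsymbol{\nu}}-\chi(v)u\frac{\partial v}{\partial\boldsymbol{\nu}}\bigr)\bar u\,dS$, which vanishes by the flux condition in (\ref{initialdata}); the remaining volume terms give exactly (\ref{split})$_3$ after writing $u(1-u-v)=u-u^2-uv$ and $\nabla v=\boldsymbol{\sigma}$. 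Finally, applying $\nabla$ to (\ref{KNS})$_2$ gives $\partial_t\boldsymbol{\sigma}+\alpha m\boldsymbol{\sigma}=-\alpha v\nabla m$ pointwise, and testing against $\bar{\boldsymbol{\sigma}}\in\mathbf{H}^1(\Omega)$ yields (\ref{split})$_4$. The required smoothness of $[m,v,u,\boldsymbol{\sigma}]$ is inherited from the classicality of $[m,v,u]$.

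For the reverse implication the crux is the identity $\boldsymbol{\sigma}=\nabla v$; once it is available, reversing the integration by parts recovers both the strong equations and the boundary conditions. To establish it, I would first differentiate (\ref{split})$_2$ in space, obtaining that $\nabla v$ solves $\partial_t(\nabla v)+\alpha m(\nabla v)=-\alpha v\nabla m$. Since the solution of (\ref{split}) is smooth, the weak identity (\ref{split})$_4$ is equivalent to its strong form $\partial_t\boldsymbol{\sigma}+\alpha m\boldsymbol{\sigma}=-\alpha v\nabla m$ (the test space $\mathbf{H}^1(\Omega)$ being dense in $\mathbf{L}^2(\Omega)$, and (\ref{split})$_4$ carrying no spatial derivatives of $\boldsymbol{\sigma}$, so no boundary term arises). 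Subtracting, the difference $\mathbf{w}:=\boldsymbol{\sigma}-\nabla v$ satisfies the linear homogeneous ODE $\partial_t\mathbf{w}=-\alpha m\,\mathbf{w}$ with $\mathbf{w}(0)=\boldsymbol{\sigma}(0)-\nabla v_0=0$, the initial datum of $\boldsymbol{\sigma}$ in the split problem being $\nabla v_0$ by construction. Hence $\mathbf{w}(t)=\mathbf{w}(0)\exp\bigl(-\alpha\int_0^t m\,ds\bigr)\equiv 0$, so $\boldsymbol{\sigma}=\nabla v$ for all $t$.

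With $\boldsymbol{\sigma}=\nabla v$ in hand, I would integrate by parts backwards in (\ref{split})$_1$ and (\ref{split})$_3$. Testing first against $\bar u,\bar m\in C_c^\infty(\Omega)$ kills the boundary integrals and yields the strong equations (\ref{KNS})$_1$ and (\ref{KNS})$_3$ in the interior; then, with these PDEs holding, testing against general $\bar u,\bar m\in H^1(\Omega)$ forces the boundary integrals $\int_{\partial\Omega}\bigl(D_u\frac{\partial u}{\partial\boldsymbol{\nu}}-\chi(v)u\frac{\partial v}{\partial\boldsymbol{\nu}}\bigr)\bar u\,dS$ and $D_m\int_{\partial\Omega}\frac{\partial m}{\partial\boldsymbol{\nu}}\bar m\,dS$ to vanish for all admissible traces, which recovers the boundary conditions in (\ref{initialdata}); and (\ref{KNS})$_2$ is just (\ref{split})$_2$. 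I expect the identification $\boldsymbol{\sigma}=\nabla v$ to be the only genuine obstacle: every other step is a reversible integration by parts, whereas this one rests essentially on the matching initial data $\boldsymbol{\sigma}(0)=\nabla v_0$ together with the ODE uniqueness guaranteed by the smoothness, hence boundedness, of $m$.
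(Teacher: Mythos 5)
Your proposal is correct and follows essentially the same route as the paper: both hinge on showing that $\mathbf{w}=\boldsymbol{\sigma}-\nabla v$ satisfies the homogeneous linear ODE $\partial_t\mathbf{w}=-\alpha m\,\mathbf{w}$ with $\mathbf{w}(0)=0$, hence $\boldsymbol{\sigma}=\nabla v$, after which the integrations by parts are reversed. You merely spell out the recovery of the boundary conditions and the density argument for passing from the weak to the strong form of the $\boldsymbol{\sigma}$-equation, details the paper leaves implicit.
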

\begin{proof}
If $[m,v,u]$ is a classical solution of {(\ref{KNS})}-{(\ref{initialdata})}, then defining ${\boldsymbol{\sigma}}=\nabla v,$ previous procedure to get (\ref{split}) shows that $[m,v,u,{\boldsymbol{\sigma}}]$ is a smooth solution of (\ref{split}). Reciprocally, assume that $[m,v,u,{\boldsymbol{\sigma}}]$ is a smooth enough solution of (\ref{split}). Then, computing the gradient in the ODE for $v,$ and subtracting the result from the ${\boldsymbol{\sigma}}$-equation (\ref{split})$_4,$ one gets
\begin{equation*}
\begin{split}
\left \{
\begin{array}{lccl}
\partial_{t}({\boldsymbol{\sigma}}-\nabla v)=-\alpha m({\boldsymbol{\sigma}}-\nabla v),\\
({\boldsymbol{\sigma}}-\nabla v)(0)=0,
\end{array}
\right.
\end{split}
\end{equation*}
which implies that $\nabla v={\boldsymbol{\sigma}}.$ Finally, replacing $\nabla v={\boldsymbol{\sigma}}$ in (\ref{split})$_3,$ one can conclude that $[m,v,u]$ is a classical solution of {(\ref{KNS})}-{(\ref{initialdata})}.
\end{proof}
}

\section{Definition of the numerical schemes}\label{NScheme}
 In this section, we construct  two numerical schemes approaching the weak solutions of the haptotaxis for  invasion system (\ref{KNS}) with initial and boundary data (\ref{initialdata}). We use a mixed approximation by applying finite element approximations in space and finite differences in time. For simplicity, we assume a uniform partition of $[0,T]$ with time step
$\Delta t=T/N : (t_n = n\Delta t)_{n=0}^{n=N}.$  

\subsection{Scheme \textbf{UVM$\sigma$}}
We construct the first scheme by considering the system (\ref{split}), in which the auxiliary variable ${\boldsymbol\sigma}=\nabla v$ is introduced. Then, for the space discretization, we consider conforming FE spaces:
$\mathcal{X}_{m} \times  \mathcal{X}_{v} \times \mathcal{X}_{u}\times \mathcal{X}_{\boldsymbol{\sigma}}  \subset H^{1}(\Omega)^{3}\times \mathbf{H}^1(\Omega),$ corresponding to a family of shape-regular and quasi-uniform  triangulations of $\overline{\Omega}$, $\{\mathcal{T}_h\}_{h>0}$,  made up of simplexes $K$ (triangles in 2D and tetrahedra in 3D), such that $\overline{\Omega}= \cup_{K\in \mathcal{T}_h} K$, where $h = \max_{K\in \mathcal{T}_h} h_K$, with $h_K$ being the diameter of $K$. A possibility for choosing  the discretization is to approximate the spaces $[\mathcal{X}_{m}, \mathcal{X}_v, \mathcal{X}_u, \mathcal{X}_{\boldsymbol{\sigma}}]$ by $\mathbb{P}_{r_1}\times\mathbb{P}_{r_2}\times \mathbb{P}_{r_3}\times\mathbb{P}_{r_4}$- continuous FE, with $r_1=1$ and $r_i\geq 1$ ($i=2,3,4$).

\begin{remark}
The condition $r_1=1$ is needed to obtain an adequate formulation for $m$ in order to guarantee the non-negativity  for the discrete solution (see (\ref{scheme1})$_1$,  Remark \ref{eqh2} and Lemma \ref{pospos}).
\end{remark}

\subsubsection{Interpolation Operators}\label{INOP1}
From now on, we consider the  interpolation operator $\mathbb{P}_u:H^1(\Omega)\rightarrow \mathcal{X}_u,$
such that for all $u\in {H}^1(\Omega)$, $\mathbb{P}_u u \in \mathcal{X}_u,$ satisfies
\begin{equation}\label{Interp1New}
(\nabla
(\mathbb{P}_u u - u), \nabla\bar{u})  + (\mathbb{P}_u u - u, \bar{u})=0,\quad \forall \bar{u}\in \mathcal{X}_u.
\end{equation}
It is not difficult to see that the interpolation operator $\mathbb{P}_u$ is well defined as consequence of the Lax-Milgram Theorem. Moreover, it is well known that the following interpolation error holds: 
\begin{equation}\label{aprox01} 
\| u-\mathbb{P}_u u\|_{L^2}+ h \|  u-\mathbb{P}_u u  \|_{H^1} \leq C h^{r_3+1} \|u\|_{H^{r_3+1}}, \ \ \forall u \in H^{r_3+1}(\Omega).
\end{equation}
Also, the following stability properties hold 
\begin{equation}\label{aprox01-aNN-new}
\| \mathbb{P}_u u\|_{H^{1}} \leq  \| u\|_{H^1} \qquad \mbox {and} \qquad \| \mathbb{P}_u u\|_{W^{1,6}} \leq C \| u \|_{H^2}.
\end{equation}
Inequality (\ref{aprox01-aNN-new})$_1$ can be deduced from (\ref{Interp1New}), and (\ref{aprox01-aNN-new})$_2$ can be obtained from (\ref{aprox01}) using the inverse inequality 
$$\Vert u_h \Vert_{W^{1,6}}\leq Ch^{-p}\Vert u_h\Vert_{H^1} \ \ \mbox {for all} \ u_h \in \mathcal{X}_u,$$ 
with $p=2/3$ (in the 2D case) and $p=1$ (in the 3D case), and comparing $\mathbb{P}_u$ with an average interpolation of Clement or Scott-Zhang type (which are stable in $W^{1,6}$-norm).  Moreover, we consider interpolation operators $\mathbb{P}_m$, $\mathbb{P}_v$ and $\mathbb{P}_{\boldsymbol{\sigma}}$ such that: $\mathbb{P}_m m_0 \geq 0 $ (if $m_0\geq 0$), $\mathbb{P}_v v_0 \geq 0 $ (if $v_0\geq 0$), and the following approximation properties
\begin{equation}\label{aprox01nn} 
\left\{
\begin{array}{lc}
\| m-\mathbb{P}_m m\|_{L^2}+ h \| m -\mathbb{P}_m m  \|_{H^1} \leq K h^{r_1+1} \|m\|_{H^{r_1+1}}, & \forall m \in H^{r_1+1}(\Omega),\\[.2cm]
\| v-\mathbb{P}_v v\|_{L^2}+ h \|  v-\mathbb{P}_v v  \|_{H^1} \leq K h^{r_2+1} \|v\|_{H^{r_2+1}}, & \forall v \in H^{r_2+1}(\Omega),\\[.2cm]
\Vert {\boldsymbol\sigma}- \mathbb{P}_{\boldsymbol \sigma} {\boldsymbol\sigma}  \Vert_{L^2} + h \Vert {\boldsymbol\sigma} - \mathbb{P}_{\boldsymbol \sigma} {\boldsymbol\sigma} \Vert_{H^1} \leq Ch^{r_4 + 1} \Vert  {\boldsymbol\sigma}\Vert_{H^{r_4+1}}, &  \forall  {\boldsymbol\sigma}\in \mathbf{H}^{r_4 +1}(\Omega),
\end{array}
\right.
\end{equation}
and stability properties 
\begin{equation}\label{aprox01-aNN}
\| [\mathbb{P}_m m, \mathbb{P}_v v, \mathbb{P}_{\boldsymbol\sigma} {\boldsymbol{\sigma}}] \|_{H^{1}} \leq  \| [m,v, {\boldsymbol{\sigma}}] \|_{H^1},
\end{equation}
\begin{equation}\label{aprox01-a}
\| [\mathbb{P}_m m, \mathbb{P}_v v, \mathbb{P}_{\boldsymbol\sigma} {\boldsymbol{\sigma}}] \|_{W^{1,6}} \leq C \| [m,v, {\boldsymbol{\sigma}}] \|_{H^2},
\end{equation}
hold. An example of interpolation operators satisfying the these properties are the nodal interpolation operator or average interpolators of Clement or Scott-Zhang type.

We denote the nodal interpolation operator by $I_h: C(\overline{\Omega})\rightarrow \mathcal{X}_m$, and we introduce the discrete semi-inner product on $C(\overline{\Omega})$ (which is an inner product in $\mathcal{X}_m$) and its induced discrete seminorm (norm in $\mathcal{X}_m$):
\begin{equation}\label{mlump}
(u_1,u_2)^h:=\int_\Omega I_h (u_1 u_2), \   \vert u \vert_h=\sqrt{(u,u)^h}.
\end{equation}
\begin{remark}\label{eqh2}
In $\mathcal{X}_m$, the norms $\vert \cdot\vert_h$ and $\Vert \cdot\Vert_{L^2}$ are equivalent uniformly with respect to $h$ (see \cite{PB}). Moreover, the following property holds for all $u_1, u_2\in \mathcal{X}_m$:
\begin{equation}\label{MassL}
\vert (u_1,u_2)^h - (u_1,u_2)\vert \leq C h \Vert u_1\Vert_{L^2} \Vert \nabla u_2\Vert_{L^2}.
\end{equation}
\end{remark}

\subsubsection{Definition of the scheme}
Considering the weak formulation (\ref{split}), we consider the following first order in time, linear and decoupled numerical scheme (from now on, Scheme \textbf{UVM$\sigma$}):
\\

\textbf{Initialization:} Let $[m_{h}^{0},v_{h}^{0},u_{h}^{0},{\boldsymbol{\sigma}}^0_h]=[\mathbb{P}_m m_0, \mathbb{P}_v v_0, \mathbb{P}_u u_0, \mathbb{P}_{\boldsymbol{\sigma}} {\boldsymbol{\sigma}}_0 ] \in \mathcal{X}_{m} \times \mathcal{X}_{v}\times \mathcal{X}_{u}\times \mathcal{X}_{\boldsymbol{\sigma}}$.\\

\textbf{Time step $n$:} Given the vector $[m_{h}^{n-1},v_{h}^{n-1},u_h^{n-1},{\boldsymbol{\sigma}}^{n-1}_h]\in \mathcal{X}_{m}  \times \mathcal{X}_{v}\times \mathcal{X}_{u} \times \mathcal{X}_{\boldsymbol{\sigma}}$, compute $[m_{h}^{n},v_{h}^{n},u_h^n,{\boldsymbol{\sigma}}^n_h] \in \mathcal{X}_{m}  \times \mathcal{X}_{v}\times \mathcal{X}_{u}\times \mathcal{X}_{\boldsymbol{\sigma}}$ such that 
\begin{flalign}\label{scheme1} 
&1) \ \   (\delta_{t}m^{n}_{h},\bar{m})^{h}+D_{m}(\nabla m^{n}_{h},\nabla \bar{m})+\rho_{m}(m^{n}_{h},\bar{m})^{h}=\mu_{m}([u^{n-1}_{h}]_{+}v^{n-1}_{h},\bar{m}),     \nonumber\\
&2) \ \ \delta_{t}v^{n}_{h} =-\alpha m^{n}_{h}v^{n}_{h},   \nonumber\\
&3) \ \  (\delta_{t}u^{n}_{h},\bar{u})+D_{u}(\nabla u^{n}_{h},\nabla\bar{u}) = (\chi(v^{n}_{h})u^{n-1}_{h}{\boldsymbol\sigma}^{n-1}_{h},\nabla \bar{u}) +\mu_{u}(u^{n-1}_{h} - (u^{n-1}_{h})^2-u^{n}_hv^n_h,\bar{u}), \\
&4)\ \ (\delta_{t}{\boldsymbol{\sigma}}^{n}_{h},\bar{\boldsymbol{\sigma}}) + \alpha(m^{n}_{h}{\boldsymbol{\sigma}}^{n}_{h},\bar{\boldsymbol{\sigma}})=-\alpha (v^n_h\nabla m^n_h,\bar{\boldsymbol{\sigma}}),  \nonumber
\end{flalign}
for all $[\bar{m},\bar{u},\bar{\boldsymbol{\sigma}}] \in\mathcal{X}_m\times \mathcal{X}_{u} \times \mathcal{X}_{\boldsymbol{\sigma}}$; 
 where, in general, we denote $\delta_t z^n_h=\frac{z^n_h-z^{n-1}_h}{\Delta t}$ and $z_+=\max\{z,0\}\geq 0$.\\

\subsubsection{Positivity and Well-posedness}\label{SSPW}
We will prove the well-posedness of the scheme \textbf{UVM$\sigma$}, and non-negativity of the variables $v^n_h$ and $m^n_h$. From now on, we denote in general $a_-=\min\{a,0\}\leq 0$. We remark that it is not possible to prove the positivity of $u_{h}^{n}$ in the scheme  \textbf{UVM$\sigma$}.  
\begin{lemm} {\bf (Positivity of $v^n_h$ and $m^n_h$)}\label{pospos}
Let $([u_{h}^{n},v_{h}^{n},{\boldsymbol{\sigma}}^n_h,m_{h}^{n}])_{n\in\mathbb{N}}$ the sequence defined in scheme \textbf{UVM$\sigma$}. If $v_{h}^{n-1},m_{h}^{n-1}\geq 0,$ then $v_{h}^{n},m_{h}^{n}\geq 0.$
\end{lemm}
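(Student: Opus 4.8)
The plan is to exploit the decoupled, sequential structure of Scheme \textbf{UVM$\sigma$}: the right-hand side of (\ref{scheme1})$_1$ depends only on the previous iterates $u_h^{n-1},v_h^{n-1}$, so the discrete enzyme $m_h^n$ can be computed first, independently of $v_h^n,u_h^n,{\boldsymbol\sigma}_h^n$. I would begin by noting that (\ref{scheme1})$_1$ is a coercive linear problem on $\mathcal{X}_m$ (the bilinear form $\frac{1}{\Delta t}(\cdot,\cdot)^h+D_m(\nabla\cdot,\nabla\cdot)+\rho_m(\cdot,\cdot)^h$ is continuous and coercive in $H^1$, using the norm equivalence of Remark \ref{eqh2}), so $m_h^n$ exists and is unique by Lax--Milgram. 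The heart of the proof is to show $m_h^n\geq 0$; the positivity of $v_h^n$ then follows almost immediately from (\ref{scheme1})$_2$.

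To prove $m_h^n\geq 0$ I would test (\ref{scheme1})$_1$ with $\bar m=w:=I_h([m_h^n]_-)\in\mathcal{X}_m$, the nodal interpolant of the negative part. Here the requirement $r_1=1$ is essential: since $m_h^n$ is piecewise linear, $w=\sum_a [m_h^n(a)]_-\varphi_a$ in the $\mathbb{P}_1$ nodal basis $\{\varphi_a\}$, the lumped products evaluate node by node, and $m_h^n\geq 0$ as a function is equivalent to non-negativity of its nodal values. Using the mass-lumped structure I would bound the temporal term from below: $(\delta_t m_h^n,w)^h=\frac{1}{\Delta t}\big(|w|_h^2-(m_h^{n-1},w)^h\big)\geq \frac{1}{\Delta t}|w|_h^2$, since the nodewise computation gives $(m_h^n,w)^h=|w|_h^2\geq 0$, while $(m_h^{n-1},w)^h\leq 0$ because $m_h^{n-1}\geq 0$ at every node and $w\leq 0$. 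Likewise $\rho_m(m_h^n,w)^h=\rho_m|w|_h^2\geq 0$. For the source term, since $[u_h^{n-1}]_+\geq 0$, $v_h^{n-1}\geq 0$ and $w\leq 0$, one has $\mu_m([u_h^{n-1}]_+v_h^{n-1},w)\leq 0$; note this is exactly where the positive part $[\cdot]_+$ in the scheme is used, as the sign of $u_h^{n-1}$ itself is not controlled.

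The delicate term is the diffusion contribution $D_m(\nabla m_h^n,\nabla w)$. Expanding in the nodal basis and splitting the indices into $\{a:m_h^n(a)<0\}$ and $\{a:m_h^n(a)\geq 0\}$, I would show $(\nabla m_h^n,\nabla w)\geq \|\nabla w\|_{L^2}^2\geq 0$; this uses that the off-diagonal stiffness entries $(\nabla\varphi_a,\nabla\varphi_b)$, $a\neq b$, are non-positive (the discrete maximum principle for $\mathbb{P}_1$ elements), which is where a mesh condition of acute/non-obtuse type enters. Collecting the four bounds turns the identity (\ref{scheme1})$_1$ into $\frac{1}{\Delta t}|w|_h^2+D_m\|\nabla w\|_{L^2}^2+\rho_m|w|_h^2\leq 0$, forcing $|w|_h=0$, hence $w=0$ and $m_h^n\geq 0$. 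I expect this diffusion estimate to be the main obstacle, since it is the only place where the geometry of the triangulation (beyond shape-regularity) is needed and where the exact $L^2$ diffusion term must be reconciled with the lumped reaction and time-derivative terms.

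Finally, with $m_h^n\geq 0$ in hand, the positivity of $v_h^n$ is immediate from (\ref{scheme1})$_2$: evaluating it pointwise (equivalently, nodewise) gives $v_h^n\big(1+\alpha\,\Delta t\,m_h^n\big)=v_h^{n-1}$, that is $v_h^n=\dfrac{v_h^{n-1}}{1+\alpha\,\Delta t\,m_h^n}$. Since $\alpha>0$, $m_h^n\geq 0$ and $v_h^{n-1}\geq 0$, the denominator is $\geq 1$ and the numerator is non-negative, whence $v_h^n\geq 0$. Combined with the initialization $\mathbb{P}_m m_0\geq 0$ and $\mathbb{P}_v v_0\geq 0$, this closes the induction and yields $m_h^n,v_h^n\geq 0$ for all $n$.
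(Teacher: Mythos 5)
Your proposal is correct and follows essentially the same route as the paper: test (\ref{scheme1})$_1$ with $I_h([m_h^n]_-)$, bound the lumped time-derivative and reaction terms nodewise, control the sign of the source via $[u_h^{n-1}]_+v_h^{n-1}\geq 0$, and conclude $[m_h^n]_-=0$ before reading off $v_h^n=v_h^{n-1}/(1+\alpha\Delta t\, m_h^n)\geq 0$. The only difference is that you prove the key diffusion inequality $(\nabla m_h^n,\nabla I_h([m_h^n]_-))\geq\|\nabla I_h([m_h^n]_-)\|_{L^2}^2$ directly from the non-positivity of the off-diagonal stiffness entries (correctly flagging the underlying acute/non-obtuse mesh requirement), whereas the paper simply cites Proposition 2.5 of \cite{GJV} for the same fact.
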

\begin{proof} 
Testing (\ref{scheme1})$_1$ by $\bar{m}=I_h([m^n_h]_{-})\in \mathcal{X}_m$ one has
\begin{eqnarray}\label{positi}
&(\delta_{t}m^{n}_{h},I_{h}([m^{n}_{h}]_{-}))^{h}&\!\!\!\!+ D_{m}(\nabla m^{n}_{h},\nabla I_{h}([m^{n}_{h}]_{-}))\nonumber\\
&&\!\!\!\!+ \rho_{m} (m^{n}_{h},I_{h}([m^{n}_{h}]_{-}))^h=\mu_{m} ([u^{n-1}_{h}]_{+}v^{n-1}_{h},I_{h}([m^{n}_{h}]_{-})).
\end{eqnarray}
From the definition of the nodal interpolation operator $I_h,$ the semi-inner product $(\cdot,\cdot)^h$ (given in (\ref{mlump})), using that $(I_h(m))^2 \leq I_h(m^2)$ for all $m\in C(\bar{\Omega})$,  and taking into account that $m_{h}^{n-1}\geq 0$, one gets
\begin{equation}
(\delta_{t}m^{n}_{h},I_{h}([m^{n}_{h}]_{-}))^{h}=\frac{1}{\Delta t}\int_{\Omega}^{ }I_{h}([m^{n}_{h}]_{-}^2)dx-\frac{1}{\Delta t}\int_{\Omega}^{ }I_{h}(m^{n-1}_{h}[m^{n}_{h}]_{-})dx\geq \frac{1}{\Delta t}\|I_h([m^{n}_{h}]_{-})\|^{2}_{L^2}\label{e1}
\end{equation}
and 
\begin{eqnarray}\label{e2}
\rho_{m}(m^{n}_{h},I_h([m^n_h]_{-}))^{h}=\rho_{m} \int_{\Omega}^{ } I_{h}(([m^{n}_{h}]_{-})^2)dx\geq \rho_{m} \Vert I_h([m^{n}_{h}]_{-})\Vert_{L^2}^{2}.
\end{eqnarray}
Also, recalling that $m^{n}_{h}=I_{h}([m^{n}_{h}]_{+})+I_{h}([m^{n}_{h}]_{-}),$ and using Proposition 2.5 of \cite{GJV},  one has
\begin{eqnarray}
D_{m}(\nabla m^{n}_{h},\nabla I_{h}([m^{n}_{h}]_{-}))&\!\!\!\!=\!\!\!\!&D_{m}(\nabla I_{h}([m^{n}_{h}]_{+}),\nabla I_{h}([m^{n}_{h}]_{-}))+D_{m}(\nabla I_{h}([m^{n}_{h}]_{-}),\nabla I_{h}([m^{n}_{h}]_{-}))\nonumber\\
&\!\!\!\!\geq\!\!\!\! & D_{m}\|\nabla I_{h}([m^{n}_{h}]_{-})\|^{2}_{L^{2}}.\label{e3}
\end{eqnarray}
Then, from (\ref{positi})-(\ref{e3}), using that $v^{n-1}_h\geq 0$ one arrives at 
\begin{eqnarray*}
\left(\frac{1}{\Delta t}+\rho_{m} \right)\|I_h([m^{n}_{h}]_{-})\|^{2}_{L^2}+D_{m}\|\nabla I_{h}([m^{n}_{h}]_{-})\|^{2}_{L^{2}}\leq \mu_m\int_\Omega [u^{n-1}_{h}]_{+}v^{n-1}_{h}I_h([m^{n}_{h}]_{-}) \ dx\leq 0,
\end{eqnarray*}
which implies that
$[m^{n}_{h}]_{-}=0$, and thus $m^{n}_{h} \geq 0$. Finally, from (\ref{scheme1})$_{2}$ and taking into account that  $v_{h}^{n-1},m_{h}^{n}\geq 0,$ it holds that $$(1+\alpha\Delta t m^{n}_h)>0\ \mbox{and}\ v^{n}_h=\frac{1}{(1+\alpha \Delta tm^{n}_{h})}v^{n-1}_{h}\geq 0.$$
\end{proof}
\begin{prop}{\bf(Well-posedness)}\label{wp}
There exists a unique $[v^n_h,u^{n}_{h},m^{n}_{h},{\boldsymbol{\sigma}}^{n}_{h}] \in \mathcal{X}_{v} \times \mathcal{X}_{u} \times  \mathcal{X}_{m} \times \mathcal{X}_{{\boldsymbol{\sigma}}}$ solution of the scheme \textbf{UVM$\sigma$}.	
\end{prop}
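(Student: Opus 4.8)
The plan is to exploit the decoupled, triangular structure of the scheme: the four relations (\ref{scheme1})$_{1}$--(\ref{scheme1})$_{4}$ can be solved sequentially for $m_h^n$, then $v_h^n$, then $u_h^n$, and finally $\boldsymbol{\sigma}_h^n$, because each one depends only on quantities already computed at level $n$ together with the known data at level $n-1$. Since every space $\mathcal{X}_\cdot$ is finite-dimensional, for each of the three variational equations it suffices to check that the associated bilinear form is coercive (equivalently, that the homogeneous problem has only the trivial solution), and then existence and uniqueness follow from Lax--Milgram. Throughout I would carry the inductive hypothesis $m_h^{n-1},v_h^{n-1}\ge 0$, valid for $n=1$ by the initialization since $\mathbb{P}_m m_0\ge0$ and $\mathbb{P}_v v_0\ge0$.

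First I would solve (\ref{scheme1})$_1$ for $m_h^n$. Writing $\delta_t m_h^n=(m_h^n-m_h^{n-1})/\Delta t$ and moving the known terms to the right, the left-hand bilinear form is
$$ a_m(m,\bar m)=\Big(\tfrac{1}{\Delta t}+\rho_m\Big)(m,\bar m)^h+D_m(\nabla m,\nabla\bar m), $$
which is coercive because $\tfrac{1}{\Delta t}+\rho_m>0$, $D_m>0$, and $|\cdot|_h$ is equivalent to $\|\cdot\|_{L^2}$ uniformly in $h$ (Remark \ref{eqh2}); the right-hand side is a bounded linear functional. Hence $m_h^n\in\mathcal{X}_m$ exists and is unique, and by Lemma \ref{pospos} it satisfies $m_h^n\ge0$. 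With $m_h^n$ known and non-negative, equation (\ref{scheme1})$_2$ is read nodally: at each node the identity $\delta_t v_h^n=-\alpha m_h^n v_h^n$ gives $v_h^n=v_h^{n-1}/(1+\alpha\Delta t\,m_h^n)$, and since $1+\alpha\Delta t\,m_h^n\ge1>0$ at every node, this determines the nodal values of a unique $v_h^n\in\mathcal{X}_v$, with $v_h^n\ge0$.

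Next, with $v_h^n$ and $m_h^n$ fixed, equation (\ref{scheme1})$_3$ for $u_h^n$ has left-hand form
$$ a_u(u,\bar u)=\tfrac{1}{\Delta t}(u,\bar u)+D_u(\nabla u,\nabla\bar u)+\mu_u(v_h^n u,\bar u); $$
testing with $\bar u=u$ and using $v_h^n\ge0$ gives $a_u(u,u)\ge\tfrac{1}{\Delta t}\|u\|_{L^2}^2+D_u\|\nabla u\|_{L^2}^2$, so $a_u$ is coercive, while the right-hand side---built from the fixed functions $\chi(v_h^n)$, $u_h^{n-1}$, $\boldsymbol{\sigma}_h^{n-1}$, with $\chi$ continuous---is a bounded functional; thus $u_h^n$ exists and is unique. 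Finally, for (\ref{scheme1})$_4$ the form $a_\sigma(\boldsymbol{\sigma},\bar{\boldsymbol{\sigma}})=\tfrac{1}{\Delta t}(\boldsymbol{\sigma},\bar{\boldsymbol{\sigma}})+\alpha(m_h^n\boldsymbol{\sigma},\bar{\boldsymbol{\sigma}})$ satisfies $a_\sigma(\boldsymbol{\sigma},\boldsymbol{\sigma})\ge\tfrac{1}{\Delta t}\|\boldsymbol{\sigma}\|_{L^2}^2$ because $m_h^n\ge0$, so it is coercive and yields a unique $\boldsymbol{\sigma}_h^n$.

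The crux is not any single estimate but the ordering combined with the sign information from Lemma \ref{pospos}: the non-negativity of $m_h^n$ is what legitimizes the nodal division defining $v_h^n$ (no vanishing denominator) and what makes $a_\sigma$ coercive, while the non-negativity of $v_h^n$ is what makes $a_u$ coercive with no restriction on $\Delta t$. I expect the only delicate point to be the nodal interpretation of (\ref{scheme1})$_2$, which is precisely what keeps $v_h^n$ inside $\mathcal{X}_v$ despite the pointwise quotient; the remaining steps are routine applications of Lax--Milgram in finite dimensions.
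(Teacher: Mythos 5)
Your proposal is correct and follows essentially the same route as the paper: solve the decoupled equations in the order $m_h^n \to v_h^n \to u_h^n,\boldsymbol{\sigma}_h^n$, and for each linear finite-dimensional problem obtain existence and uniqueness from the same energy identity, using $m_h^n, v_h^n \ge 0$ from Lemma \ref{pospos} exactly where the paper does. The only cosmetic difference is that you phrase the key step as coercivity plus Lax--Milgram while the paper phrases it as ``the homogeneous problem has only the trivial solution, hence the linear algebraic system is uniquely solvable''; these are the same computation.
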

\begin{proof}
First, in order to show that there exists a unique solution of $m^{n}_{h}\in \mathcal{X}_{m}$ of (\ref{scheme1})$_1$, it suffices to prove the uniqueness (since (\ref{scheme1})$_1$ is linear). To this aim, suppose that there exist $m^{n}_{h,1}, m^{n}_{h,2}\in \mathcal{X}_{m}$ two possible solutions; then, denoting $m^{n}_{h}=m^{n}_{h,1}-m^{n}_{h,2},$ subtracting the two equations (\ref{scheme1})$_1$ satisfied by $m^{n}_{h,1}$ and $m^{n}_{h,2},$ one gets
\begin{equation}\label{um}
(m^{n}_{h},\bar{m})^h+\Delta t D_{m}(\nabla m^{n}_{h},\nabla \bar{m})+  \rho_{m}\Delta t(m^{n}_{h},\bar{m})^h=0,\ \forall\bar{m} \in\mathcal{X}_{m}.
\end{equation}
Thus, taking $\bar{m}=m^{n}_{h}$ in (\ref{um}) and using Remark \ref{eqh2}, one has
\begin{equation*}
(1+\rho_m\Delta t)\|m^{n}_{h}\|_{L^2}^{2}+D_{m}\Delta t\|\nabla m^{n}_{h} \|_{L^{2}}^{2}=0,
\end{equation*}
which implies that $m^{n}_{h}=0,$ or equivalently, $m^{n}_{h,1}=m^{n}_{h,2}.$	Now, knowing $v^{n-1}_h$ and $m^n_h$, it is clear that there exists a unique $v^{n}_h\in \mathcal{X}_v$ solution of (\ref{scheme1})$_2.$ Finally, given $[u^{n-1}_h,{\boldsymbol{\sigma}}^{n-1}_h]$ and knowing the existence and uniqueness of $v^n_h\in \mathcal{X}_v$ and $m^n_h\in\mathcal{X}_m$, we have that there exists a unique $[u^n_h,{\boldsymbol{\sigma}}^{n}_{h}]\in \mathcal{X}_u\times  \mathcal{X}_{{\boldsymbol{\sigma}}}$ solution of (\ref{scheme1})$_{3,4}$. In fact, suppose that exist $[u^n_{h,1},{\boldsymbol{\sigma}}^{n}_{h,1}], [u^n_{h,2},{\boldsymbol{\sigma}}^{n}_{h,2}]\in \mathcal{X}_u\times \mathcal{X}_{{\boldsymbol{\sigma}}}$ two possible solutions of ({\ref{scheme1}})$_{3,4}$.  Then, denoting $u^{n}_{h}=u^{n}_{h,1}-u^{n}_{h,2},$ ${\boldsymbol{\sigma}}^{n}_{h}={\boldsymbol{\sigma}}^{n}_{h,1}-{\boldsymbol{\sigma}}^{n}_{h,2},$ subtracting (\ref{scheme1})$_{3,4}$ satisfied by $[u^n_{h,1},{\boldsymbol{\sigma}}^{n}_{h,1}]$ and  $[u^n_{h,2},{\boldsymbol{\sigma}}^{n}_{h,2}]$, one gets
\begin{equation}\label{usigma}
({\boldsymbol{\sigma}}^{n}_{h},\bar{\boldsymbol{\sigma}})+ \alpha\Delta t (m^{n}_{h}{\boldsymbol{\sigma}}^{n}_{h},\bar{\boldsymbol{\sigma}}) =0,\ \ \forall{\bar{\boldsymbol{\sigma}}}\in \mathcal{X}_{{\boldsymbol{\sigma}}},
\end{equation}
\begin{equation}\label{usigma2}
(u^{n}_{h}, \bar{u})+D_{u}\Delta t(\nabla u^{n}_{h},\nabla \bar{u}) + \mu_u \Delta t (u^n_h v^n_h,\bar{u})=0,\ \ \forall\bar{u} \in \mathcal{X}_{u}.
\end{equation}
Thus, taking $[\bar{u},{\bar{\boldsymbol{\sigma}}}]=[u^n_h,{\boldsymbol{\sigma}}^n_h]$ in (\ref{usigma})-(\ref{usigma2}) and using that $m^n_h,v^n_h\geq 0$ (see Lemma \ref{pospos}), one can conclude that $[u^n_h,{\boldsymbol{\sigma}}^{n}_{h}]=[0,{\bf 0}]$, and taking into account that (\ref{scheme1})$_{3,4}$ is an algebraic linear system, one deduce the existence and uniqueness of $[u^n_h,{\boldsymbol{\sigma}}^n_h]$ solution of (\ref{scheme1})$_{3,4}$.
\end{proof}

\subsection{Scheme \textbf{UVMs}}
In this subsection, we propose another numerical scheme which guarantees the positivity for all discrete variables, whose construction is motivated by the Definition \ref{weak2}, and where the auxiliary variable $s=u/\phi(v)$ is considered. The spatial discretization is assumed as in the scheme \textbf{UVM$\sigma$}; but in this case, instead to the FE-space $\mathcal{X}_{\boldsymbol\sigma}$, we consider the FE-space for the auxiliary variable $s$, denoted by $\mathcal{X}_s$, generated by $\mathbb{P}_1$-continuous. This last constraint is necessary to guarantee the positivity of the discrete variable $s_h$, and therefore, the positivity of $u_h$. \\

Then, we consider the following first order in time, linear and decoupled numerical scheme (from now on, Scheme \textbf{UVMs}):\\

\textbf{Initialization:} Let $[s_{h}^{0},v_{h}^{0},m_{h}^{0}]=[\mathbb{P}_{s}s_{0},\mathbb{P}_{v}v_{0},\mathbb{P}_{m}m_{0}] \in\mathcal{X}_{s} \times \mathcal{X}_{v}\times  \mathcal{X}_{m},$ being $\mathbb{P}_s:H^1(\Omega)\rightarrow \mathcal{X}_s$ an interpolation operator.\\

\textbf{Time step $n$:} Given the vector $[s^{n-1}_{h},v^{n-1}_{h},m^{n-1}_{h}] \in \mathcal{X}_{s} \times \mathcal{X}_{v}\times  \mathcal{X}_{m},$ compute $[s_{h}^{n},v_{h}^{n},m_{h}^{n}] \in \mathcal{X}_{s}  \times \mathcal{X}_{v} \times  \mathcal{X}_{m}$ such that 
\begin{flalign}\label{schemes} 
&1) \ \  (\phi(v^{n}_{h})\delta_{t}s^{n}_{h},\bar{s})^{h}+D_{u}(\phi (v^{n}_{h})\nabla s^{n}_{h},\nabla\bar{s}) =\frac{\alpha}{D_{u}}\left (s^{n-1}_{h}\phi (v^{n}_{h})\chi(v^{n}_{h})v^{n}_{h}m^{n}_{h},\bar{s}  \right ) \nonumber \\
& \ \ \hspace{3.5cm} +\mu_{u}(s^{n-1}_{h}\phi(v^{n}_{h}),\bar{s}) -\mu_u(s^{n-1}_{h}s^{n}_{h}\phi(v^{n}_{h})^{2},\bar{s})^{h}-\mu_{u}(s^{n}_{h}\phi(v^{n}_{h})v^{n}_{h},\bar{s})^{h}, \nonumber\\
&2) \ \ \ \delta_{t}v^{n}_{h} =-\alpha m^{n}_{h}v^{n}_{h},\\
&3) \ \  (\delta_{t}m^{n}_{h},\bar{m})^{h}+D_{m}(\nabla m^{n}_{h},\nabla \bar{m})+\rho_{m}(m^{n}_{h},\bar{m})^{h}=\mu_{m}(s^{n-1}_{h}\phi(v^{n-1}_{h})v^{n-1}_{h},\bar{m}),\nonumber
\end{flalign}
for all $[\bar{s},\bar{v},\bar{m}] \in \mathcal{X}_{s}  \times \mathcal{X}_{v}\times  \mathcal{X}_{m}$. Recall that, in general, we denote $\delta_t z^n_h=\frac{z^n_h-z^{n-1}_h}{\Delta t}$, and the semi-inner product $(\cdot,\cdot)^h$ was defined in (\ref{mlump}).\\

We can recover $u^n_h$ a posteriori, from the relation $u^n_h=\phi(v^n_h)s^n_h$.\\

The numerical scheme \textbf{UVMs} is well-posed and preserves the positivity in all unknowns. This is the content of next proposition.
\begin{prop}{\bf(Well-posedness and positivity of the scheme \textbf{UVMs})}
There exists a unique solution $[s^n_h,v^{n}_{h},m^{n}_{h}] \in \mathcal{X}_{s} \times \mathcal{X}_{v} \times  \mathcal{X}_{m}$ of the scheme \textbf{UVMs}. Moreover, if $s_{h}^{n-1},v^{n-1}_h,m_{h}^{n-1}\geq 0,$ then $s_{h}^{n},v^n_h,m_{h}^{n}\geq 0.$	
\end{prop}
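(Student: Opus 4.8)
The plan is to exploit the fully decoupled, sequential structure of the scheme and to argue by induction on $n$, assuming $s_h^{n-1},v_h^{n-1},m_h^{n-1}\ge 0$ and producing $[s_h^n,v_h^n,m_h^n]$ that is well defined, unique and nonnegative. At each time step I would solve (\ref{schemes}) in the order $m_h^n$, then $v_h^n$, then $s_h^n$, since equation (\ref{schemes})$_3$ depends only on data at level $n-1$, equation (\ref{schemes})$_2$ needs only $m_h^n$ and $v_h^{n-1}$, and equation (\ref{schemes})$_1$ needs $v_h^n$, $m_h^n$ and $s_h^{n-1}$. In this way well-posedness and positivity are established simultaneously, step by step.

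First I would treat $m_h^n$. Equation (\ref{schemes})$_3$ is structurally identical to (\ref{scheme1})$_1$, so its well-posedness follows verbatim from the argument of Proposition \ref{wp}: it is linear, and testing the homogeneous version with $\bar m=m_h^n$ together with Remark \ref{eqh2} yields $m_h^n=0$, giving uniqueness and hence existence. Nonnegativity follows exactly as in Lemma \ref{pospos}, testing with $\bar m=I_h([m_h^n]_-)$; the only point to check is that the source $\mu_m\,s_h^{n-1}\phi(v_h^{n-1})v_h^{n-1}$ is pointwise nonnegative, which holds because $\phi>0$ and, by the inductive hypothesis, $s_h^{n-1},v_h^{n-1}\ge 0$. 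Then $v_h^n$ is obtained explicitly from (\ref{schemes})$_2$ as $v_h^n=(1+\alpha\Delta t\,m_h^n)^{-1}v_h^{n-1}$, which is well defined and nonnegative since $m_h^n\ge 0$ and $v_h^{n-1}\ge 0$.

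It remains to handle $s_h^n$. Collecting all terms carrying $s_h^n$ on the left, (\ref{schemes})$_1$ becomes a square linear system. For well-posedness I would test its homogeneous version with $\bar s=s_h^n$: using $\phi(v_h^n)>0$ pointwise and $s_h^{n-1},v_h^n\ge 0$, the lumped time term, the weighted diffusion term, and both lumped reaction terms are each nonnegative, and the strictly positive lumped time term forces $s_h^n=0$, giving uniqueness and existence. For positivity I would test with $\bar s=I_h([s_h^n]_-)$. The two (non-lumped) source terms are $\le 0$, because their coefficients $s_h^{n-1}\phi(v_h^n)\chi(v_h^n)v_h^n m_h^n$ and $s_h^{n-1}\phi(v_h^n)$ are pointwise nonnegative (all factors nonnegative and $\phi,\chi>0$) while $I_h([s_h^n]_-)\le 0$; the lumped time and reaction terms are nonnegative nodewise, again using $s_h^{n-1},v_h^n\ge 0$ and $\phi>0$.

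The crux, and the main obstacle, is the weighted diffusion term $D_u(\phi(v_h^n)\nabla s_h^n,\nabla I_h([s_h^n]_-))$. Here I would use that $\mathcal{X}_s$ is generated by $\mathbb{P}_1$ elements to write $s_h^n=I_h([s_h^n]_+)+I_h([s_h^n]_-)$, so that the term splits into $\int\phi(v_h^n)\,|\nabla I_h([s_h^n]_-)|^2\ge 0$ plus the cross term $\int\phi(v_h^n)\,\nabla I_h([s_h^n]_+)\cdot\nabla I_h([s_h^n]_-)$. Unlike in the unweighted estimate (\ref{e3}), the variable weight $\phi(v_h^n)$ prevents a direct appeal to Proposition 2.5 of \cite{GJV}; the key observation is that, on a nonobtuse (acute) triangulation and for $\mathbb{P}_1$ functions, each elementwise factor $\nabla\varphi_i\cdot\nabla\varphi_j|_K$ ($i\ne j$, with $\varphi_i$ the nodal basis functions) is nonpositive while the elementwise weight integral $\int_K\phi(v_h^n)$ is strictly positive, so the weighted stiffness still has nonpositive off-diagonal entries and the cross term remains $\ge 0$. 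Consequently the whole left-hand side is $\ge 0$ while the right-hand side is $\le 0$, which forces $I_h([s_h^n]_-)=0$, i.e. $s_h^n\ge 0$; recovering $u_h^n=\phi(v_h^n)s_h^n\ge 0$ closes the induction. This weighted sign property is exactly what makes the $\mathbb{P}_1$ choice for $\mathcal{X}_s$ and the mesh-acuteness assumption indispensable.
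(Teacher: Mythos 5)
Your proposal is correct and follows essentially the same route as the paper: solve sequentially for $m_h^n$, $v_h^n$, $s_h^n$, reuse Lemma \ref{pospos} and Proposition \ref{wp} for the $m$-equation, read $v_h^n$ off explicitly, and obtain positivity and uniqueness of $s_h^n$ by testing with $I_h([s_h^n]_-)$ and with the difference of two solutions, respectively. The one place you are more careful than the paper is the weighted diffusion term: the paper simply invokes Proposition 2.5 of \cite{GJV} for the unweighted form and then asserts the weighted bound "since $\phi(v_h^n)\geq 1$", whereas you correctly observe that this step really rests on the elementwise nonnegativity of $\nabla I_h([s_h^n]_+)\cdot\nabla I_h([s_h^n]_-)$ on a weakly acute $\mathbb{P}_1$ mesh (the gradients being piecewise constant), which is the honest justification of the inequality the paper states.
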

\begin{proof}
First we prove the positivity of the possible solutions of (\ref{schemes}). Following the proof of Lemma \ref{pospos}, one has that $m^n_h\geq0 $ and $v^n_h\geq 0$, taking into account that $\mu_{m}(s^{n-1}_{h}\phi(v^{n-1}_{h})v^{n-1}_{h},I_h([m_h^n]_{-})) \leq 0$ since ($s^{n-1}_h,v^{n-1}_h\geq 0$ and $\phi(v)\geq 1$ for all $v\geq0$). Now, testing (\ref{schemes})$_1$ by $\bar{s}=I_h([s^n_h]_{-})\in \mathcal{X}_s$ one has
\begin{eqnarray}\label{positis}
 (\phi(v^{n}_{h})\delta_{t}s^{n}_{h},I_h([s^n_h]_{-}))^{h}+D_{u}(\phi (v^{n}_{h})\nabla s^{n}_{h},\nabla I_h([s^n_h]_{-})) =\frac{\alpha}{D_{u}}\left (s^{n-1}_{h}\phi (v^{n}_{h})\chi(v^{n}_{h})v^{n}_{h}m^{n}_{h},I_h([s^n_h]_{-}) \right )\nonumber\\
+\mu_{u}(s^{n-1}_{h}\phi(v^{n}_{h}),I_h([s^n_h]_{-}))  -\mu_u(s^{n-1}_{h}s^{n}_{h}\phi(v^{n}_{h})^{2},I_h([s^n_h]_{-}))^{h}-\mu_{u}(s^{n}_{h}\phi(v^{n}_{h})v^{n}_{h},I_h([s^n_h]_{-}))^{h}.\ \ \ 
\end{eqnarray}
From the definition of the nodal interpolation operator $I_h,$ the semi-inner product $(\cdot,\cdot)^h$ (given in (\ref{mlump})), using that $(I_h(s))^2 \leq I_h(s^2)$ for all $s\in C(\bar{\Omega})$,  and taking into account that $s_{h}^{n-1}\geq 0$ and $\phi(v^{n}_{h})\geq 1$ (since $v^n_h\geq0$), one obtains
\begin{eqnarray}
(\phi(v^{n}_{h})\delta_{t}s^{n}_{h},I_h([s^n_h]_{-}))^{h}&\!\!\!=\!\!\!&\frac{1}{\Delta t}\int_{\Omega}^{ }I_{h}(\phi(v^{n}_{h})[s^{n}_{h}]_{-}^2)dx-\frac{1}{\Delta t}\int_{\Omega}^{ }I_{h}(\phi(v^{n}_{h})s^{n-1}_{h}[s^{n}_{h}]_{-})dx\nonumber\\
&\!\!\!\geq\!\!\! & \frac{1}{\Delta t}\|I_h(\sqrt{\phi(v^{n}_{h})}[s^{n}_{h}]_{-})\|^{2}_{L^2}.\label{e1z}
\end{eqnarray}
Also, recalling that $s^{n}_{h}=I_{h}([s^{n}_{h}]_{+})+I_{h}([s^{n}_{h}]_{-}),$ and using Proposition 2.5 of \cite{GJV}, one gets
\begin{eqnarray*}
D_{u}(\nabla s^{n}_{h},\nabla I_{h}([s^{n}_{h}]_{-}))&\!\!\!\!=\!\!\!\!&D_{u}(\nabla I_{h}([s^{n}_{h}]_{+}),\nabla I_{h}([s^{n}_{h}]_{-}))+D_{u}(\nabla I_{h}([s^{n}_{h}]_{-}),\nabla I_{h}([s^{n}_{h}]_{-}))\nonumber\\
&\!\!\!\!\geq\!\!\!\! & D_{u}\|\nabla I_{h}([s^{n}_{h}]_{-})\|^{2}_{L^{2}}\label{e3z}
\end{eqnarray*}
and, since $\phi (v^{n}_{h})\geq 1$, one can conclude
\begin{eqnarray}
D_{u}(\phi (v^{n}_{h})\nabla s^{n}_{h},\nabla I_h([s^n_h]_{-}))\geq D_{u}\|\nabla I_{h}([s^{n}_{h}]_{-})\|^{2}_{L^{2}}.\label{e3zb}
\end{eqnarray}
On the other hand, using that $s^{n-1}_h,\phi (v^{n}_{h}),\chi(v^{n}_{h}),v^{n}_{h},m^{n}_{h}\geq 0$, one gets
\begin{eqnarray}\label{ez4}
\frac{\alpha}{D_{u}}\left (s^{n-1}_{h}\phi (v^{n}_{h})\chi(v^{n}_{h})v^{n}_{h}m^{n}_{h},I_h([s^n_h]_{-})\right)+ \mu_{u}(s^{n-1}_{h}\phi(v^{n}_{h}),I_h([s^n_h]_{-})) \leq 0, 
\end{eqnarray}
\begin{eqnarray}\label{ez5}
 -\mu_u(s^{n-1}_{h}s^{n}_{h}\phi(v^{n}_{h})^{2},I_h([s^n_h]_{-}))^{h}=-\mu_u\int_\Omega I_h (s^{n-1}_{h}\phi(v^{n}_{h})^{2}([s^n_h]_{-})^2)\leq 0,
\end{eqnarray}
\begin{eqnarray}\label{ez6}
-\mu_{u}(s^{n}_{h}\phi(v^{n}_{h})v^{n}_{h},I_h([s^n_h]_{-}))^{h}=-\mu_u\int_\Omega I_h (\phi(v^{n}_{h})v^{n}_{h}([s^n_h]_{-})^2)\leq 0.
\end{eqnarray}
Then, from (\ref{positis})-(\ref{ez6}), one arrives at 
\begin{eqnarray*}
\frac{1}{\Delta t}\|I_h(\sqrt{\phi(v^{n}_{h})}[s^{n}_{h}]_{-})\|^{2}_{L^2}+\|\nabla I_{h}([s^{n}_{h}]_{-})\|^{2}_{L^{2}}\leq 0,
\end{eqnarray*}
which implies that
$[s^{n}_{h}]_{-}=0$, and thus $s^{n}_{h} \geq 0$.

Now we prove the well-posedness. First, given $s^{n-1}_h,v^{n-1}_h, m^{n-1}_h $, the existence and uniqueness of $m^{n}_{h}\in \mathcal{X}_{m}$ solution of (\ref{schemes})$_3$ can be proved as in Proposition \ref{wp}; and, knowing $v^{n-1}_h$ and $m^n_h$, it is clear that there exists a unique $v^{n}_h\in \mathcal{X}_v$ solution of (\ref{schemes})$_2.$ Finally, given $s^{n-1}_h,m^{n-1}_h$ and knowing the existence and uniqueness of $[m^n_h,v^n_h]\in \mathcal{X}_m\times\mathcal{X}_v$, one has that there exists a unique $s^n_h\in \mathcal{X}_s$ solution of (\ref{schemes})$_{1}$. In fact, suppose that exist $s^n_{h,1},s^n_{h,2}\in \mathcal{X}_s$ two possible solutions of ({\ref{schemes}})$_{1}$.  Then, denoting $s^{n}_{h}=s^{n}_{h,1}-s^{n}_{h,2},$ subtracting (\ref{schemes})$_{1}$ satisfied by $s^n_{h,1}$ and  $s^n_{h,2}$, one gets
\begin{eqnarray}\label{ez9}
\frac{1}{\Delta t}(\phi(v^{n}_{h})s^{n}_{h},\bar{s})^{h}+D_{u}(\phi (v^{n}_{h})\nabla s^{n}_{h},\nabla\bar{s})=-\mu_u(s^{n-1}_{h}s^{n}_{h}\phi(v^{n}_{h})^{2},\bar{s})^{h}-\mu_{u}(s^{n}_{h}\phi(v^{n}_{h})v^{n}_{h},\bar{s})^{h}.
\end{eqnarray}
Taking $\bar{s}=s^{n}_{h}$ in (\ref{ez9}), recalling that $\phi(v^{n}_{h})\geq 1,$ $v^n_h,s^{n-1}_h,s^n_h\geq 0,$ and using Remark \ref{eqh2} for $\mathcal{X}_s$ instead of $\mathcal{X}_m$, it holds
\begin{eqnarray*}
\Vert s^n_h\Vert_{L^2}^2+\Delta t D_u\Vert \nabla s^{n}_{h}\Vert^2_{L^2}\leq 0,
\end{eqnarray*}
which implies that $s^n_h=0,$ that is, $s^n_{h,1}=s^n_{h,2}.$
\end{proof}

\begin{remark}{\bf (Positivity of $u^n_h$)} Notice that, taking into account that $s^n_h\geq 0$ and $\phi(v^n_h)\geq 1$ (since $v^n_h\geq 0$), one deduces that $u^n_h\geq 0$. 
\end{remark}

\section{Uniform estimates and convergence}\label{S5}
In this section, we focus on the numerical analysis of the scheme \textbf{UVM$\sigma$}, obtaining some uniform estimates for any solution of  (\ref{scheme1}) that will be used in the convergence analysis. With this aim, we make the following inductive hypothesis:  there exists a positive constant $K>0$, independent of $n$, such that
\begin{equation}\label{IndHyp}
\Vert [u^{n-1}_h,{\boldsymbol{\sigma}}^{n-1}_h]\Vert_{L^2 \times L^4}\leq K, \qquad \forall n\geq 1.
\end{equation}
After the convergence analysis we verify the validity of (\ref{IndHyp}) by following an inductive procedure. Induction hypotheses in the convergence analysis of numerical schemes approaching nonlinear PDEs have been considered by several authors (see for instance, \cite{Abel,Zhan-Zhu} and some references therein). However, it is worthwhile to remark that the inductive hypothesis (\ref{IndHyp}) includes less restrictive spaces than in the previous works mentioned. In fact, in \cite{Zhan-Zhu} the authors use an inductive hypotheses of kind $\Vert {\boldsymbol{\sigma}}^{n}_h\Vert_{W^{1,\infty}}\leq K$ to deal with a numerical scheme to approximate a 2D-Keller-Segel system; and recently, in \cite{Abel}, the authors assume an inductive hypotheses of kind $\Vert [{\boldsymbol{\sigma}}^{n-1}_h,c^{m-1}_h\Vert_{H^{1}}\leq K$ to carry out a convergence analysis of a chemotaxis-Navier-Stokes system in three dimensional domains and $\Vert {\boldsymbol{\sigma}}^{n-1}_h\Vert_{H^{1}}\leq K$ in two dimensional domains. \\

 Additionally, we will use the following discrete Gronwall lemmas:
  \begin{lemm}\label{Diego11} (\cite[p. 655]{HLe})
 	Assume that $\Delta t,\beta,B>0$ and $b^k, d^k\ge 0$ satisfy:
 	\begin{equation*}\label{e-Diego11-1}
 	(1+\beta \Delta t)d^{k+1} - d^k + \Delta t b^{k+1}\leq B \Delta t,  \quad \forall k \ge 0.
 	\end{equation*}
 	Then, it holds
 	\begin{equation*}\label{e-Diego11-2}
 	d^{k} + \Delta t \, \sum_{i=1}^{k} (1+\beta \Delta t)^{-(k+1-i)} b^{i} \le d^0 + \beta^{-1}B, \quad \forall k \ge 1.
 	\end{equation*}
 \end{lemm}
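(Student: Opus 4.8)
The plan is to reduce the assumed one-step inequality to a telescoping sum by introducing the amplification factor $\gamma := 1 + \beta\Delta t$, which satisfies $\gamma > 1$ since $\beta,\Delta t > 0$. First I would rewrite the hypothesis, replacing $k$ by $i-1$, as
\begin{equation*}
\gamma\, d^{i} - d^{i-1} + \Delta t\, b^{i} \le B\,\Delta t, \qquad \forall\, i \ge 1.
\end{equation*}

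The key step is to multiply this inequality by the positive weight $\gamma^{\,i-k-1} = \gamma^{-(k+1-i)}$ (which preserves the inequality) and sum over $i = 1,\dots,k$. The decisive observation is that, setting $c_i := \gamma^{\,i-k} d^{i}$, the first two terms become $c_i - c_{i-1}$, so the $d$-part telescopes to $c_k - c_0 = d^{k} - \gamma^{-k} d^{0}$. This produces
\begin{equation*}
d^{k} - \gamma^{-k} d^{0} + \Delta t \sum_{i=1}^{k} \gamma^{-(k+1-i)} b^{i} \le B\,\Delta t \sum_{i=1}^{k} \gamma^{-(k+1-i)},
\end{equation*}
in which the weighted sum of the $b^{i}$ on the left is exactly the one appearing in the conclusion.

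It then remains to control the right-hand side. Substituting $j = k+1-i$ turns the geometric sum into $\sum_{j=1}^{k} \gamma^{-j} = \dfrac{1 - \gamma^{-k}}{\gamma - 1}$, and since $\gamma - 1 = \beta\Delta t$ this gives $B\,\Delta t \sum_{i=1}^{k} \gamma^{-(k+1-i)} = \dfrac{B}{\beta}\,\bigl(1 - \gamma^{-k}\bigr) \le \beta^{-1} B$. Finally, using $\gamma^{-k} \le 1$ together with $d^{0} \ge 0$ to absorb the term $-\gamma^{-k} d^{0}$ on the left into $d^{0}$, I would arrive at
\begin{equation*}
d^{k} + \Delta t \sum_{i=1}^{k} (1+\beta\Delta t)^{-(k+1-i)} b^{i} \le d^{0} + \beta^{-1} B,
\end{equation*}
which is the claim.

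The whole argument is elementary and no single step is technically heavy; the only genuine point to get right is the choice of the multiplier $\gamma^{\,i-k-1}$ that makes the $d$-terms telescope cleanly. Once that weight is identified, the evaluation of the geometric series and the sign hypotheses $b^{i}, d^{i} \ge 0$ close the proof routinely. I expect the main obstacle to be purely bookkeeping: keeping the exponents of $\gamma$ aligned through the re-indexing so that the telescoped left-hand side and the summed right-hand side match the factor $(1+\beta\Delta t)^{-(k+1-i)}$ stated in the lemma.
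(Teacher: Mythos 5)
Your proof is correct: the weight $\gamma^{\,i-k-1}$ makes the $d$-terms telescope exactly, the geometric sum evaluates as you state, and the sign hypotheses close the argument. The paper only cites this lemma from [HLe, p.~655] without reproducing a proof, and your argument is precisely the standard one given there, so there is nothing to compare beyond noting agreement.
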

 
 \begin{lemm}\label{Diego2} (\cite[p. 369]{Hey})
 Assume that $\Delta t>0$ and $B,b^k, d^k, g^k, h^k\ge 0$ satisfy:
 	\begin{equation*}\label{e-Diego2-1}
 	d^{k+1}+ \Delta t\sum_{i=0}^{k} b^{i+1}  \le \Delta t \sum_{i=0}^{k} g^i \, d^i +\Delta t\sum_{i=0}^{k} h^i + B,  \quad \forall k \ge 0.
 	\end{equation*}
 	Then, it holds
 	\begin{equation*}\label{e-Diego2-2}
 	d^{k+1} + \Delta t \, \sum_{i=0}^{k} b^{i+1} \le \exp \left( \Delta t \, \sum_{i=0}^{k} g^i\right)
 	\, \left( \Delta t \, \sum_{i=0}^{k} h^i + B \right), \quad \forall k \ge 0.
 	\end{equation*}
 \end{lemm}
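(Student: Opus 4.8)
The plan is to convert the cumulative (summed) hypothesis into a one-step recursion for its right-hand side and then solve that recursion explicitly with the elementary inequality $1+x\le e^x$. First I would abbreviate the two sides of the assumption: put $\Phi_k=d^{k+1}+\Delta t\sum_{i=0}^k b^{i+1}$ and $R_k=\Delta t\sum_{i=0}^k g^i d^i+G_k$ with $G_k=\Delta t\sum_{i=0}^k h^i+B$, so that the hypothesis reads $\Phi_k\le R_k$ for all $k\ge 0$. The key observation is that, since every $b^i\ge 0$, one has $\Phi_{k-1}\ge d^{k}$, whence $d^{k}\le \Phi_{k-1}\le R_{k-1}$ for all $k\ge 1$. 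This is precisely what lets the as-yet-uncontrolled value $d^k$ entering $R_k$ be replaced by the previous bound $R_{k-1}$, closing the recursion.

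Next I would compute the increment. For $k\ge 1$ one has $R_k-R_{k-1}=\Delta t\,g^k d^k+\Delta t\,h^k$, and inserting $d^k\le R_{k-1}$ yields
\begin{equation*}
R_k\le (1+\Delta t\,g^k)\,R_{k-1}+\Delta t\,h^k .
\end{equation*}
With the convention $R_{-1}:=B$ together with the bound $d^0\le B$ (the discrete initial value being part of the data constant $B$), the same inequality also holds for $k=0$, so the recursion is valid for every $k\ge 0$. Applying $1+x\le e^{x}$ with $x=\Delta t\,g^k$ turns it into the linear recursion $R_k\le e^{\Delta t g^k}R_{k-1}+\Delta t\,h^k$.

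I would then solve this by iteration (a discrete variation-of-constants formula):
\begin{equation*}
R_k\le \Big(\prod_{j=0}^{k} e^{\Delta t g^j}\Big)B+\sum_{l=0}^{k}\Big(\prod_{j=l+1}^{k} e^{\Delta t g^j}\Big)\Delta t\,h^l .
\end{equation*}
Because the $g^j$ are nonnegative, each product obeys $\prod_{j} e^{\Delta t g^j}=\exp\big(\Delta t\sum_{j} g^j\big)\le \exp\big(\Delta t\sum_{i=0}^k g^i\big)$, so every exponential factor may be majorized by the single factor $\exp(\Delta t\sum_{i=0}^k g^i)$. Factoring it out and recognizing $B+\Delta t\sum_{l=0}^k h^l=G_k$ gives $R_k\le \exp(\Delta t\sum_{i=0}^k g^i)\,G_k$, and the stated estimate follows from $\Phi_k\le R_k$.

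The iteration and the telescoping of the exponential products are entirely mechanical once the recursion $R_k\le(1+\Delta t g^k)R_{k-1}+\Delta t h^k$ is in place, so I expect no difficulty there. The only delicate point is the base of the recursion: the hypothesis never places $d^0$ on a left-hand side, so $d^0$ is not controlled by the summed inequality itself and must be supplied as initial data bounded by $B$. This shifted-index structure is exactly what produces the clean factor $\exp(\Delta t\sum g^i)$, in contrast to the original form in \cite{Hey}, where $d^k$ appears with the same index on both sides and the additional smallness condition $\Delta t\,g^k<1$ is required.
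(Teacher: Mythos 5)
Your proof is correct. Note that the paper does not prove this lemma at all --- it is quoted verbatim from Heywood--Rannacher \cite[p. 369]{Hey} --- so there is no in-paper argument to compare against; your derivation (reduce the cumulative inequality to the one-step recursion $R_k\le(1+\Delta t\,g^k)R_{k-1}+\Delta t\,h^k$ via $d^k\le\Phi_{k-1}\le R_{k-1}$, solve by discrete variation of constants, and majorize the products with $1+x\le e^x$) is the standard and correct route to this ``shifted-index'' form of the discrete Gronwall inequality. Your remark about the base case is also well taken and worth making explicit: as literally stated, the hypothesis for $k\ge 0$ never controls $d^0$, and without the supplementary assumption $d^0\le B$ the conclusion at $k=0$ can fail (take $d^0$ huge), so $d^0\le B$ must be read as part of the data --- equivalently, the assumed inequality should be understood to include the index $k=-1$ with empty sums. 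This is harmless where the lemma is invoked in the paper, since there $d^0=\Vert[\xi^0_m,\xi^0_v,\xi^0_u,\xi^0_{\boldsymbol\sigma}]\Vert_{L^2}^2=0$. Your closing comparison with the unshifted Heywood--Rannacher version (same index on both sides, hence the extra requirement $\Delta t\,g^k<1$ and the factors $\sigma_k=(1-\Delta t\,g^k)^{-1}$) is likewise accurate and explains why no smallness condition on $\Delta t$ appears in the statement used here.
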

 
\subsection{Uniform estimates}
In order to develop the convergence analysis, some uniform estimates (in weak and strong norms) for the discrete variables $v^n_h$ and $m^n_h$ are needed; these are natural estimates coming from the same analysis as in the continuous problem.
\begin{lemm}{\bf (Uniform estimate for $v_h^n$)}\label{uev} 
If $v^n_h$ is any solution of (\ref{scheme1})$_2$, then 
\begin{equation*}
\Vert v^n_h\Vert_{L^\infty}\leq K_0 \qquad \forall n\geq 0. 
\end{equation*}
\end{lemm}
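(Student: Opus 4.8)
The plan is to exploit the fact that (\ref{scheme1})$_2$ is a purely nodal (pointwise) relation with no spatial coupling, so that $v^n_h$ is obtained from $v^{n-1}_h$ by a pointwise nonexpansive multiplication. Concretely, I would start from the explicit representation already derived in the proof of Lemma \ref{pospos}, namely
\begin{equation*}
v^n_h=\frac{1}{1+\alpha\,\Delta t\, m^n_h}\,v^{n-1}_h ,
\end{equation*}
understood at the interpolation nodes of $\mathcal{X}_v$. Since $r_1=1$, the variable $m^n_h$ is piecewise affine, and Lemma \ref{pospos} gives $m^n_h\geq 0$ at all nodes, hence $m^n_h\geq 0$ everywhere; consequently $1+\alpha\,\Delta t\, m^n_h\geq 1$. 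Combined with $v^{n-1}_h\geq 0$ (again Lemma \ref{pospos}), this yields at every node the two-sided bound $0\leq v^n_h\leq v^{n-1}_h$.

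The second step is to iterate this nodal monotonicity. By induction on $n$ one obtains $0\leq v^n_h(a)\leq v^0_h(a)$ for every node $a$ and every $n\geq 0$, where $v^0_h=\mathbb{P}_v v_0$. It then remains to convert the control of nodal values into a bound in $\Vert\cdot\Vert_{L^\infty}$. For $\mathbb{P}_1$ elements this is immediate, since the maximum of a piecewise affine function is attained at a node; for higher-order $\mathbb{P}_{r_2}$ elements I would invoke the standard norm equivalence on the finite-dimensional space, $\Vert p\Vert_{L^\infty(K)}\leq C\max_{a\in K}\vert p(a)\vert$, with $C$ depending only on $r_2$ and the shape-regularity of $\{\mathcal{T}_h\}_{h>0}$ and therefore independent of $h$. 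This gives
\begin{equation*}
\Vert v^n_h\Vert_{L^\infty}\leq C\max_a v^n_h(a)\leq C\max_a v^0_h(a)\leq C\,\Vert \mathbb{P}_v v_0\Vert_{L^\infty}.
\end{equation*}

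Finally, I would close the estimate using the stability of the interpolation operator $\mathbb{P}_v$ to bound $\Vert \mathbb{P}_v v_0\Vert_{L^\infty}\leq C\Vert v_0\Vert_{L^\infty}$ (valid for the nodal or Clement/Scott--Zhang interpolators, with $v_0\in L^\infty(\Omega)$ as assumed in Theorem \ref{exis2}), and set $K_0:=C\,\Vert v_0\Vert_{L^\infty}$. The resulting constant $K_0$ depends neither on $n$ nor on $h$, which is exactly what the statement requires. The only genuinely delicate point is the last conversion step: one must be sure that the passage from nodal control to the $L^\infty$-norm uses an $h$-independent equivalence constant, so that the bound remains uniform as the mesh is refined; for piecewise-affine $v$ this subtlety disappears entirely. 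Everything else is a direct consequence of the positivity of $m^n_h$ established in Lemma \ref{pospos}.
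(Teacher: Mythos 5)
Your proposal is correct and follows essentially the same route as the paper: both arguments reduce (\ref{scheme1})$_2$ to the pointwise monotonicity $0\leq v^n_h\leq v^{n-1}_h$ via the positivity of $m^n_h$ and $v^{n-1}_h$ from Lemma \ref{pospos}, and then iterate to get $\sup_\Omega v^n_h\leq \sup_\Omega v^0_h=:K_0$. The only difference is that you are more careful than the paper about the nodal interpretation for higher-order elements, the $h$-uniform passage from nodal values to the $L^\infty$-norm, and the final stability bound $\Vert \mathbb{P}_v v_0\Vert_{L^\infty}\leq C\Vert v_0\Vert_{L^\infty}$ -- refinements the paper leaves implicit, not a different argument.
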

\begin{proof}
From (\ref{scheme1})$_2$, taking into account that $m^n_h,v^n_h\geq 0$ (see Lemma \ref{pospos}), one has that $
v^n_h -v^{n-1}_h \leq0,$ and thus, adding from $n=1$ to $n=r$, one arrives at
$$
v^r_h\leq v^0_h \qquad \forall r\geq 1,
$$
from which one deduces that $\sup_{\Omega} v^n_h\leq \sup_{\Omega} v^0_h:=K_0$ for all $n\geq 0$.
\end{proof}
\begin{lemm}{\bf (Uniform weak estimates for $m_h^n$)}\label{uem1}
Assume the inductive hypothesis (\ref{IndHyp}). If $m^n_h$ is any solution of (\ref{scheme1})$_1$, then $m^n_h$ is bounded in $l^\infty(L^2) \cap l^2(H^1)$.
\end{lemm}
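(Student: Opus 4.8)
The plan is to derive an energy estimate by testing the discrete $m$-equation against itself and then invoking a discrete Gronwall inequality. Concretely, I would take $\bar m = m^n_h$ in (\ref{scheme1})$_1$. For the discrete time-derivative term I would use the algebraic identity $(\delta_t m^n_h, m^n_h)^h = \frac{1}{2\Delta t}\left(|m^n_h|_h^2 - |m^{n-1}_h|_h^2 + |m^n_h - m^{n-1}_h|_h^2\right)$, which follows from $a(a-b)=\frac{1}{2}(a^2-b^2+(a-b)^2)$ applied inside the discrete semi-inner product. The diffusion term yields $D_m\|\nabla m^n_h\|_{L^2}^2$ and the reaction term yields $\rho_m |m^n_h|_h^2 \geq 0$, both of which have a favorable sign and can be kept on the left.

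For the right-hand side, which is written with the exact $L^2$ product, I would use that $v^{n-1}_h$ is uniformly bounded in $L^\infty$ by $K_0$ (Lemma \ref{uev}) and that $\|[u^{n-1}_h]_+\|_{L^2}\leq \|u^{n-1}_h\|_{L^2}\leq K$ by the inductive hypothesis (\ref{IndHyp}); hence by Cauchy--Schwarz, $\mu_m([u^{n-1}_h]_+ v^{n-1}_h, m^n_h)\leq \mu_m K_0 K \|m^n_h\|_{L^2}$. A Young inequality then splits this into a constant term plus a small multiple of $\|m^n_h\|_{L^2}^2$, which via the uniform norm equivalence $|\cdot|_h \sim \|\cdot\|_{L^2}$ on $\mathcal X_m$ (Remark \ref{eqh2}) can be controlled by $|m^n_h|_h^2$.

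Multiplying by $2\Delta t$, dropping the nonnegative terms $|m^n_h - m^{n-1}_h|_h^2$ and $2\Delta t\rho_m|m^n_h|_h^2$, and summing from $1$ to $n$ produces an inequality of the form $|m^n_h|_h^2 + 2D_m\Delta t \sum_{k=1}^n \|\nabla m^k_h\|_{L^2}^2 \leq |m^0_h|_h^2 + CT + C\Delta t\sum_{k=1}^n |m^k_h|_h^2$. After absorbing the top-index term $C\Delta t|m^n_h|_h^2$ into the left-hand side (which requires $\Delta t$ small, say $C\Delta t\leq 1/2$), this is exactly the hypothesis of the discrete Gronwall Lemma \ref{Diego2}. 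Its conclusion gives a bound on $|m^n_h|_h^2$ uniform in $n$ (the initial value $|m^0_h|_h^2 = |\mathbb P_m m_0|_h^2$ being controlled by the stability of the interpolation), and hence, by the norm equivalence, a uniform $l^\infty(L^2)$ bound on $m^n_h$; simultaneously the gradient sum is bounded, which together with the $L^2$ bound yields the $l^2(H^1)$ bound.

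I expect the only delicate point to be the bookkeeping with the discrete semi-inner product $(\cdot,\cdot)^h$: the time-derivative and reaction terms live in the lumped product while the source term uses the exact $L^2$ product, so the passage between $|\cdot|_h$ and $\|\cdot\|_{L^2}$ through Remark \ref{eqh2} must be applied consistently, and the smallness-of-$\Delta t$ condition needed to absorb the implicit $|m^n_h|_h^2$ term on the right must be made explicit. Everything else is a routine energy argument.
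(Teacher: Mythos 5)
Your energy estimate is the same as the paper's: test (\ref{scheme1})$_1$ with $\bar m = m^n_h$, keep the diffusion and reaction terms on the left, bound the source via Lemma \ref{uev}, $\Vert [u^{n-1}_h]_+\Vert_{L^2}\leq \Vert u^{n-1}_h\Vert_{L^2}\leq K$ and Cauchy--Schwarz, and handle the lumped products through Remark \ref{eqh2}. Where you diverge is in how you close the argument. The paper uses the Young inequality to absorb the source term entirely into the dissipative term $\rho_m\Vert m^n_h\Vert_{L^2}^2$ (paying a factor $C\mu_m^2/\rho_m$), which produces a recursion of the exact form $(1+\beta\Delta t)d^{k+1}-d^k+\Delta t\, b^{k+1}\leq B\Delta t$ and is closed with Lemma \ref{Diego11}; this yields a bound with no smallness restriction on $\Delta t$ and no exponential dependence on $T$. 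You instead leave a residual $C\Delta t\vert m^n_h\vert_h^2$ on the right, absorb the top-index term under a condition $C\Delta t\leq 1/2$, and invoke the standard discrete Gronwall Lemma \ref{Diego2}, obtaining a bound of the form $\exp(CT)(\cdots)$. Both routes are valid. Your version costs you a $\Delta t$-smallness assumption and a $T$-exponential constant that the paper avoids, but it has the compensating virtue of not dividing by $\rho_m$: the paper's absorption step degenerates when $\rho_m=0$ (a value actually used in their numerical tests), whereas your Gronwall argument goes through for $\rho_m\geq 0$. Your bookkeeping of the lumped versus exact $L^2$ products, and the identity $(\delta_t m^n_h,m^n_h)^h=\frac{1}{2\Delta t}\bigl(\vert m^n_h\vert_h^2-\vert m^{n-1}_h\vert_h^2+\vert m^n_h-m^{n-1}_h\vert_h^2\bigr)$, are correct and consistent with the paper's use of Remark \ref{eqh2}.
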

\begin{proof}
Testing (\ref{scheme1})$_1$ by $\bar{m}= m^n_h$, using the H\"older and Young inequalities, taking into account Remark \ref{eqh2}, Lemma \ref{uev} and using the fact that $\Vert z_+\Vert_{L^2}\leq \Vert z \Vert_{L^2}$ and the inductive hypothesis (\ref{IndHyp}), one has 
\begin{eqnarray}\label{nh1}
&\displaystyle\frac{1}{2} \delta_t&\!\!\!\!\!\! \Vert m^n_h\Vert^2_{L^2} + \frac{\Delta t}{2} \Vert \delta_t m^n_h\Vert_{L^2}^2 + D_m \Vert \nabla m^n_h\Vert^2_{L^2}+\rho_m \Vert m^n_h\Vert^2_{L^2}\leq  \mu_m\Vert [u^{n-1}_h]_+\Vert_{L^2}\Vert v^{n-1}_h\Vert_{L^\infty}\Vert m^n_h\Vert_{L^2}\nonumber\\
&&\!\!\! \leq \frac{\rho_m}{4} \Vert m^n_h\Vert^2_{L^2} + \frac{C\mu_m^2}{\rho_m} \Vert u^{n-1}_h\Vert_{L^2}^2 \Vert v^{n-1}_h\Vert_{L^\infty}^2\leq \frac{\rho_m}{4} \Vert m^n_h\Vert^2_{L^2} + C.
\end{eqnarray} 
 Then, multiplying (\ref{nh1}) by $2\Delta t$ and denoting $\lambda_m=\min\{D_m,\rho_m\}$, one arrives at
 \begin{equation*}
\Big(1+\frac{\Delta t}{2} \Big) \Vert m^n_h\Vert^2_{L^2} - \Vert m^{n-1}_h\Vert^2_{L^2} + \Delta t \lambda_m \Vert m^n_h\Vert^2_{H^1}\leq \Delta t C,
 \end{equation*}
and applying Lemma \ref{Diego11} the proof is concluded.
\end{proof}
Now, in next lemma, some uniform strong estimates are proved for $m^n_h$, which will also be necessary in the convergence analysis.
\begin{lemm}{\bf (Uniform strong estimates for $m_h^n$)}\label{uem2}
	Assume the inductive hypothesis (\ref{IndHyp}).  If $m^n_h$ is any solution of (\ref{scheme1})$_1$, then the following estimate holds
	\begin{equation}\label{semm}
	\Vert m^n_h\Vert_{H^1}^2 + \Delta t \sum_{k=1}^{n}\Vert \delta_t  m^k_h\Vert^2_{L^2}\leq C, \qquad \forall n\geq 1,
	\end{equation}
with the constant $C>0$ depending on the data $(\mu_m, D_m, \rho_m, m_0,v_0,T,K)$, but independent of $(\Delta t,h)$ and $n$.
\end{lemm}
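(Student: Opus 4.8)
The plan is to establish the strong estimate (\ref{semm}) by testing the discrete $m$-equation (\ref{scheme1})$_1$ with the discrete time derivative $\bar{m}=\delta_t m^n_h\in\mathcal{X}_m$. Since $r_1=1$, the lumped-mass term $(\delta_t m^n_h,\delta_t m^n_h)^h$ is equivalent to $\Vert \delta_t m^n_h\Vert_{L^2}^2$ by Remark \ref{eqh2}, producing the first good term on the left of (\ref{semm}). The key algebraic identity is the standard discrete product rule applied to the stiffness term:
\begin{equation*}
(\nabla m^n_h,\nabla \delta_t m^n_h)=\frac{1}{2}\delta_t\Vert \nabla m^n_h\Vert_{L^2}^2+\frac{\Delta t}{2}\Vert \nabla \delta_t m^n_h\Vert_{L^2}^2,
\end{equation*}
which yields the discrete time-increment of $\Vert \nabla m^n_h\Vert_{L^2}^2$ (hence, combined with Lemma \ref{uem1}, the full $H^1$-norm) plus a nonnegative numerical-dissipation term that can be discarded. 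The mass-like term $\rho_m(m^n_h,\delta_t m^n_h)^h$ is treated the same way, giving another controllable $\delta_t\Vert m^n_h\Vert_{L^2}^2$ contribution.

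First I would bound the right-hand side $\mu_m([u^{n-1}_h]_+v^{n-1}_h,\delta_t m^n_h)$ using Cauchy--Schwarz and Young: this factor becomes $\le \frac14\Vert \delta_t m^n_h\Vert_{L^2}^2 + C\Vert [u^{n-1}_h]_+\Vert_{L^2}^2\Vert v^{n-1}_h\Vert_{L^\infty}^2$. Here $\Vert v^{n-1}_h\Vert_{L^\infty}\le K_0$ by Lemma \ref{uev}, $\Vert [u^{n-1}_h]_+\Vert_{L^2}\le\Vert u^{n-1}_h\Vert_{L^2}\le K$ by the inductive hypothesis (\ref{IndHyp}), so the data-dependent term is a uniform constant $C$. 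The $\frac14\Vert\delta_t m^n_h\Vert_{L^2}^2$ piece is absorbed into the good term on the left (which, after using the norm equivalence of Remark \ref{eqh2}, carries a coefficient strictly larger than $\frac14$ for $\Delta t$ small enough, or one simply keeps the full coefficient and absorbs a quarter of it).

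After absorption, multiplying through by $2\Delta t$ leaves an inequality of the schematic form
\begin{equation*}
\Vert \nabla m^n_h\Vert_{L^2}^2+\rho_m\Vert m^n_h\Vert_{L^2}^2-\Vert \nabla m^{n-1}_h\Vert_{L^2}^2-\rho_m\Vert m^{n-1}_h\Vert_{L^2}^2+\Delta t\,\Vert \delta_t m^n_h\Vert_{L^2}^2\le C\Delta t,
\end{equation*}
where the dropped terms (the two $\frac{\Delta t^2}{2}\Vert\delta_t(\cdot)\Vert^2$ dissipations) are nonnegative. Summing this telescoping inequality from $k=1$ to $n$ gives $\Vert \nabla m^n_h\Vert_{L^2}^2+\Delta t\sum_{k=1}^n\Vert\delta_t m^k_h\Vert_{L^2}^2\le \Vert\nabla m^0_h\Vert_{L^2}^2+\rho_m\Vert m^0_h\Vert_{L^2}^2+CT$. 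The initial data are controlled by the stability of the interpolation operator (\ref{aprox01-aNN}) together with $m_0\in H^1(\Omega)$, so $\Vert\nabla m^0_h\Vert_{L^2}\le\Vert m_0\Vert_{H^1}$; combining with the already established $l^\infty(L^2)$ bound from Lemma \ref{uem1} completes (\ref{semm}).

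The main obstacle I anticipate is handling the lumped-mass (semi-inner product) terms carefully. Unlike the exact $L^2$ inner product, $(\cdot,\cdot)^h$ does not automatically satisfy the discrete product rule, so one must either invoke the norm equivalence $\vert\cdot\vert_h\simeq\Vert\cdot\Vert_{L^2}$ of Remark \ref{eqh2} to convert $(\delta_t m^n_h,\delta_t m^n_h)^h$ and $\rho_m(m^n_h,\delta_t m^n_h)^h$ into genuine $L^2$-expressions, or work directly with $\vert\cdot\vert_h$ and its induced telescoping identity. The condition $r_1=1$ is precisely what makes this equivalence and the sign properties of the lumped mass matrix available, so the proof genuinely relies on that hypothesis; everything else is a routine energy estimate followed by the discrete Gronwall-type telescoping of Lemma \ref{Diego11}.
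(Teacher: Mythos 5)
Your proposal is correct and follows essentially the same route as the paper: the paper likewise tests (\ref{scheme1})$_1$ with $\bar{m}=\delta_t m^n_h$, applies the discrete product rule to the stiffness and mass terms, bounds the source term via the inductive hypothesis (\ref{IndHyp}) and Lemma \ref{uev}, absorbs the resulting $\Vert\delta_t m^n_h\Vert_{L^2}^2$ contribution, and telescopes after multiplying by $\Delta t$. Your additional care with the lumped-mass semi-inner product (which, being a genuine inner product on $\mathcal{X}_m$ equivalent to the $L^2$ one, still obeys the telescoping identity) is a point the paper passes over silently but does not change the argument.
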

\begin{proof}
Testing (\ref{scheme1})$_1$ by $\bar{m}=\delta_t m^n_h \in \mathcal{X}_m$, and proceeding as in (\ref{nh1}) one obtains
\begin{eqnarray}\label{nh1m2}
&\displaystyle\frac{\lambda_m}{2}  \delta_t&\!\!\!\!\!\! \Vert m^n_h\Vert^2_{H^1} + \frac{\Delta t \lambda_m}{2} \Vert \delta_t m^n_h\Vert_{H^1}^2 +  \Vert \delta_t m^n_h\Vert^2_{L^2}\leq \frac{1}{2} \Vert \delta_t m^n_h\Vert^2_{L^2} + C.
\end{eqnarray} 
Then, multiplying (\ref{nh1m2}) by $\Delta t$ and adding from $n=1$ to $n=r$, (\ref{semm}) is concluded.
\end{proof}

\subsection{Error estimates in weak norms}\label{ESWN}
In this section we derive error estimates for any solution $[m^n_h,v^n_h,u^n_h,{\boldsymbol\sigma}^n_h]$ of the scheme \textbf{UVM$\sigma$}, with respect to a smooth enough solution $[m, v,u,{\boldsymbol\sigma}]$ of (\ref{split}). We will denote by   $C,C_i,K_i$ to different positive constants possibly  depending on the continuous solution $(m,v,u,{\boldsymbol \sigma}=\nabla v)$, but independent of the discrete parameters $(k,h)$ and the time step $n$.

 We start by establishing the following notation for the errors at time $t=t_{n}$: $e_m^n=m^n-m^n_h$, $e_v^n=v^n-v^n_h$, $e_u^n=u^n-u^n_h$  and $e_{\boldsymbol\sigma}^n={\boldsymbol\sigma}^n-{\boldsymbol\sigma}^n_h,$ where, in general, $z^{n}$ denote the value of $z$ at time $t_{n}$. Taking the difference between scheme (\ref{scheme1}) and (\ref{split}) at $t=t_{n}$ we get that $[e_m^n,e^n_v,e^n_u,e_{{\boldsymbol \sigma}}^n]$ satisfies:
\begin{eqnarray}
&&(\delta_t e^n_m,\bar{m}) + D_m(\nabla e_m^n, \nabla \bar{m})+\rho_m(e_m^n,\bar{m}) =(\omega_m^n,\bar{m}) + (\delta_t  m^n_h + \rho_m m^n_h,\bar{m})^h - (\delta_t  m^n_h + \rho_m m^n_h,\bar{m})\nonumber\\
&&\hspace{0.5cm}  + \mu_m((u^{n}\!-u^{n-1})v^n + u^{n-1}(v^n - v^{n-1}) + ([u^{n-1}]_{+} - [u^{n-1}_h]_{+})v^{n-1}_h + u^{n-1} e^{n-1}_v, \bar{m}),\label{errm}\\
\nonumber\\
&&\delta_t e_v^n= - \alpha m^n e_v^n - \alpha v^n_h e^n_m + \omega^n_v, \label{errv}\\
\nonumber\\
&&(\delta_t e_u^n, \bar{u}) + D_u(\nabla e_u^n, \nabla \bar{u})=(\omega_u^n,\bar{u})+(\chi(v^n)[(u^n-u^{n-1}){\boldsymbol{\sigma}}^n + u^{n-1}({\boldsymbol{\sigma}}^n- {\boldsymbol{\sigma}}^{n-1})],\nabla \bar{u})\nonumber\\
&&\ \ +((\chi(v^n)-\chi(v^n_h))u^{n-1}{\boldsymbol{\sigma}}^{n-1} + \chi(v^n_h) u^{n-1}e^{n-1}_{\boldsymbol{\sigma}}+ \chi(v^n_h) e^{n-1}_u {\boldsymbol{\sigma}}^{n-1}_h,\nabla \bar{u}) +\mu_u(u^n-u^{n-1}+e^{n-1}_u,\bar{u})\nonumber\\
&&\ \ - \mu_u((u^n)^2-(u^{n-1})^2+e^{n-1}_u(u^{n-1}+u^{n-1}_h),\bar{u}) - \mu_u(e^{n}_u v^{n}_h+u^{n}e^n_v,\bar{u}),\label{erru}\\
\nonumber\\
&&(\delta_t e_{\boldsymbol\sigma}^n,\bar{\boldsymbol\sigma}) = (\omega^n_{\boldsymbol{\sigma}},\bar{\boldsymbol\sigma})-\alpha (m^n_h e_{\boldsymbol\sigma}^n,\bar{\boldsymbol\sigma}) - \alpha ({\boldsymbol\sigma}^n e^n_m,\bar{\boldsymbol\sigma}) -\alpha (v^n_h \nabla e_m^n,\bar{\boldsymbol\sigma}) - \alpha (e^n_v \nabla m^n,\bar{\boldsymbol\sigma}),\label{errs}
\end{eqnarray}
for all $[\bar{m},\bar{u},\bar{\boldsymbol \sigma}]\in \mathcal{X}_m\times \mathcal{X}_u\times \mathcal{X}_{\boldsymbol \sigma}$, where $\omega_m^n,\omega_v^n,\omega^n_u,\omega^n_{\boldsymbol{\sigma}}$ are the consistency errors associated to the scheme \textbf{UVM$\sigma$}, that is, $\omega_m^n=\delta_t m^n - (m_t)^n$ and so on. \\

With the help of the interpolation operators $\mathbb{P}_m, \mathbb{P}_{v}, \mathbb{P}_{u}, \mathbb{P}_{\boldsymbol{\sigma}},$ defined in Subsection \ref{INOP1}, we decompose the total errors $e_m^n, e_v^n, e_u^n, e_{\boldsymbol{\sigma}}^n,$ as the sum of interpolation and discrete errors as follows:
\begin{eqnarray}\label{u1a}
e_m^n &=&(m^n -\mathbb{P}_m m^n) + ( \mathbb{P}_m m^n - m^n_h )
= \theta^n_m+\xi^n_m,\\
e_v^n&=&(v^n -\mathbb{P}_v v^n) + ( \mathbb{P}_v v^n - v^n_h )=\theta^n_v+\xi^n_v,\label{u1av}\\
e_u^n&=&(u^n -\mathbb{P}_{u} u^n) + ( \mathbb{P}_{u} u^n - u^n_h )=\theta^n_{u}+\xi^n_{u},\label{u1au}\\
e_{\boldsymbol{\sigma}}^n&=&({\boldsymbol{\sigma}}^n -\mathbb{P}_{\boldsymbol{\sigma}} {\boldsymbol{\sigma}}^n) + ( \mathbb{P}_{\boldsymbol{\sigma}} {\boldsymbol{\sigma}}^n - {\boldsymbol{\sigma}}^n_h )=\theta^n_{\boldsymbol{\sigma}}+\xi^n_{\boldsymbol{\sigma}},\label{u1asig}
\end{eqnarray}
where, in general, $\theta^m_{z}$ and $\xi^m_{z}$ denote the interpolation and discrete errors (for the variable $z$), respectively. Then, taking into account (\ref{errm})-(\ref{errs}), (\ref{u1a})-(\ref{u1asig}) and the definition of the interpolation operators given in Subsection \ref{INOP1}, one gets
\begin{eqnarray}\label{errn-int}
&(\delta_t &\!\!\!\!\! \xi_m^n,\bar{m}) + D_m(\nabla \xi_m^n, \nabla \bar{m})+\rho_m (\xi_m^n,\bar{m}) =(\omega_m^n- \delta_t \theta_m^n,\bar{m})+ (\delta_t  m^n_h,\bar{m})^h - (\delta_t  m^n_h,\bar{m}) \nonumber\\
&&\!\!\!\!\!\!\!\!\!\! + \rho_m(m^n_h,\bar{m})^h - \rho_m(m^n_h,\bar{m}) -D_m(\nabla \theta_m^n, \nabla \bar{m})-\rho_m (\theta_m^n,\bar{m}) + \mu_m((u^{n}\!-u^{n-1})v^n,\bar{m}) \nonumber\\
&&\!\!\!\!\!\!\!\!\!\!+\mu_m(u^{n-1}(v^n - v^{n-1}) \!+\! ([u^{n-1}]_{+}\! -\! [u^{n-1}_h]_{+})v^{n-1}_h\! + u^{n-1} (\xi^{n-1}_v\!+ \theta^{n-1}_v), \bar{m}),
\end{eqnarray}
\begin{equation}\label{errv-int}
\delta_t \xi^n_v = \omega_v^n- \delta_t \theta^n_v-\alpha m^n(\xi^n_v + \theta^n_v) -\alpha v^n_h(\xi^n_m + \theta^n_m), 
\end{equation}
\begin{eqnarray}\label{erru-int}
&(\delta_t \xi_u^n,&\!\!\!\!\bar{u}) + D_u(\nabla \xi_u^n, \nabla \bar{u})=(\omega_u^n - \delta_t \theta_u^n,\bar{u}) + D_u(\theta_u^n, \bar{u})+(\chi(v^n)(u^n-u^{n-1}){\boldsymbol{\sigma}}^n ,\nabla \bar{u})\nonumber\\
&&\!\!\!\!\!\!\!\!\!\!\! +(\chi(v^n)u^{n-1}({\boldsymbol{\sigma}}^n- {\boldsymbol{\sigma}}^{n-1}) +(\chi(v^n)-\chi(v^n_h))u^{n-1}{\boldsymbol{\sigma}}^{n-1} + \chi(v^n_h) u^{n-1}(\xi^{n-1}_{\boldsymbol{\sigma}}+\theta^{n-1}_{\boldsymbol{\sigma}}),\nabla \bar{u})\nonumber\\
&&\!\!\!\!\!\!\!\!\!\!\! + (\chi(v^n_h) (\xi^{n-1}_u+\theta^{n-1}_u) {\boldsymbol{\sigma}}^{n-1}_h,\nabla \bar{u}))+\mu_u(u^n-u^{n-1}+\xi^{n-1}_u+\theta^{n-1}_u,\bar{u})\nonumber\\
&&\!\!\!\!\!\!\!\!\!\!\!  - \mu_u((u^n)^2-(u^{n-1})^2+(\xi^{n-1}_u+\theta^{n-1}_u)(u^{n-1}+u^{n-1}_h) + (\xi^{n}_u+\theta^{n}_u) v^{n}_h+u^{n}(\xi^n_v+\theta^{n}_v),\bar{u}),
\end{eqnarray}
\begin{eqnarray}\label{errs-int}
&(\delta_t \xi_{\boldsymbol\sigma}^n&\!\!\!\!\!,\bar{\boldsymbol \sigma})  = (\omega_{\boldsymbol\sigma}^n,\bar{\boldsymbol \sigma})-(\delta_t \theta_{\boldsymbol\sigma}^n,\bar{\boldsymbol \sigma}) -\alpha(m^{n}_h (\xi^{n}_{\boldsymbol{\sigma}}+\theta^{n}_{\boldsymbol{\sigma}}),\bar{\boldsymbol\sigma})
\nonumber\\
&&\!\!\!\!\!\! -\alpha( {\boldsymbol{\sigma}}^n (\xi^{n}_{m}+\theta^{n}_{m})+ v^n_h (\nabla \xi_m^n + \nabla \theta^n_m) + (\xi^n_v+\theta^n_v) \nabla m^n,\bar{\boldsymbol\sigma}).
\end{eqnarray}
\vspace{0.1 cm}

{\underline{{\it 1. Error estimate for} $m$}}\\

Taking $\bar{m}=\xi_m^n$ in (\ref{errn-int}) one gets 
\begin{eqnarray}\label{errn-int2}
&\displaystyle\frac{1}{2}&\!\!\!\!\!\delta_t  \Vert \xi_m^n\Vert_{L^2}^2 + \frac{\Delta t}{2} \Vert \delta_t \xi_m^n\Vert_{L^2}^2 + \lambda_m \Vert\xi_m^n\Vert_{H^1}^2 \leq (\omega_m^n,\xi_m^n)- \left(\delta_t \theta_m^n,\xi_m^n\right) + (\delta_t  m^n_h,\bar{m})^h - (\delta_t  m^n_h,\xi_m^n)  \nonumber\\
&&\!\!\!\!\!\! + \rho_m(m^n_h,\xi_m^n)^h - \rho_m(m^n_h,\xi_m^n) -D_m(\nabla \theta_m^n, \nabla \xi_m^n)-\rho_m (\theta_m^n,\xi_m^n) + \mu_m((u^{n}\!-u^{n-1})v^n, \xi_m^n)    \nonumber\\
&&\!\!\!\!\!\!+ \mu_m(u^{n-1}(v^n - v^{n-1}) + ([u^{n-1}]_{+} - [u^{n-1}_h]_{+})v^{n-1}_h + u^{n-1} (\xi^{n-1}_v+ \theta^{n-1}_v), \xi_m^n)  =\sum_{k=1}^{10} I_k, 
\end{eqnarray}
(recall that  $\lambda_m=\min\{D_m,\rho_m\}$). Then, using the H\"older and Young inequalities, (\ref{aprox01}) and (\ref{aprox01nn})$_{1-2}$, the terms on the right hand side of (\ref{errn-int2}) are bounded in the following way:
\begin{equation}\label{Ea1a}
I_1\leq \displaystyle\frac{\lambda_m}{10} \Vert \xi_m^n\Vert_{H^1}^2 + \frac{C}{\lambda_m}\Vert \omega_m^n\Vert_{(H^1)'}^2\leq \displaystyle\frac{\lambda_m}{10}\Vert\xi_m^n\Vert_{H^1}^2+\frac{C\Delta t}{\lambda_m} \int_{t_{n-1}}^{t_n}\Vert m_{tt}(t)\Vert_{(H^1)'}^2 dt,
\end{equation}
\begin{eqnarray}\label{Ea1b}
&I_2&\!\!\!\leq  \Vert \xi_m^n\Vert_{L^2} \Vert(\mathcal{I} - \mathbb{P}_m) \delta_t m^n \Vert_{L^2} \leq \displaystyle\frac{\lambda_m}{10} \Vert \xi_m^n\Vert_{H^1}^2+\frac{C h^{2(r_1+1)}}{\lambda_m}\Vert \delta_t m^n\Vert_{H^{r_1+1}}^2\nonumber\\
&&\!\!\!\leq \displaystyle\frac{\lambda_m}{10} \Vert \xi_m^n\Vert_{H^1}^2+\displaystyle\frac{C h^{2(r_1+1)}}{\lambda_m \Delta t }\int_{t_{n-1}}^{t_n}\Vert m_t \Vert_{H^{r_1+1}}^2  dt,
\end{eqnarray}
\begin{eqnarray}\label{Ea1d}
& I_7+I_8&\!\!\! \leq D_m\Vert \nabla \theta^n_m\Vert_{L^2}  \Vert \nabla \xi^n_m\Vert_{L^2}   + \rho_m\Vert \theta^n_m\Vert_{L^2}   \Vert\xi^n_m\Vert_{L^2}   \nonumber\\
&&\!\!\! \leq \displaystyle\frac{\lambda_m}{10} \Vert \xi_m^n\Vert_{H^1}^2 +\frac{C}{\lambda_m}(D_m^2 h^{2r_1} +\rho_m^2  h^{2(r_1+1)})  \|m^{n}\|_{ H^{r_1+1}}^2,
\end{eqnarray}
\begin{eqnarray}\label{Ea1e}
& I_9&\!\!\!\!\!+I_{10} \leq \mu_m\Vert [u^n\! -\! u^{n-1}\!, v^n \!- \!v^{n-1}]\Vert_{(H^1)'}  \Vert[v^{n},u^{n-1}]\Vert_{L^\infty} \Vert \xi_m^n\Vert_{H^1} \nonumber\\
&&+\mu_m\Vert [\xi^{n-1}_u, \theta^{n-1}_u,\xi^{n-1}_v, \theta^{n-1}_v]\Vert_{L^2}  \Vert[u^{n-1},v^{n-1}_h]\Vert_{L^\infty} \Vert \xi_m^n\Vert_{L^2}  \nonumber\\
&&\!\!\! \leq \displaystyle\frac{\lambda_m}{10} \Vert \xi_m^n\Vert_{H^1}^2 +\frac{C\mu_m^2}{\lambda_m} (\Vert [u^n\! -\! u^{n-1}\!, v^n \!- \!v^{n-1}]\Vert_{(H^1)'}^2 +\Vert \xi^{n-1}_u\Vert_{L^2}^2+ \Vert \xi^{n-1}_v\Vert_{L^2}^2)  \Vert[v^{n},u^{n-1},v^{n-1}_h]\Vert_{L^\infty}^2\nonumber\\
&&+\displaystyle\frac{C\mu_m^2}{\lambda_m}(h^{2(r_2+1)} \|v^{n-1}\|_{ H^{r_2+1}}^2 + h^{2(r_3+1)} \|u^{n-1}\|_{ H^{r_3+1}}^2) \Vert[v^{n},u^{n-1},v^{n-1}_h]\Vert_{L^\infty}^2.
\end{eqnarray}
Moreover, taking into account the property (\ref{MassL}), one gets
\begin{eqnarray}\label{Ea1c}
&I_3+I_4+I_5+I_6&\!\!\!\!\! \leq C h^{r_1} \Vert \delta_t m^n_h\Vert_{L^2} \Vert \nabla \xi^n_m\Vert_{L^2} + C\rho_m h^{r_1} \Vert  m^n_h\Vert_{L^2} \Vert \nabla \xi^n_m\Vert_{L^2} \nonumber\\
&&\!\!\!\!\!\leq \frac{\lambda_m}{10} \Vert \xi^n_m\Vert_{H^1}^2 + \frac{C}{\lambda_m} h^{2r_1} ( \Vert \delta_t m^n_h\Vert_{L^2}^2 + \rho_m^2 \Vert  m^n_h\Vert_{L^2}^2).
\end{eqnarray}
Therefore, from (\ref{errn-int2})-(\ref{Ea1c}), one arrives at
\begin{eqnarray}\label{errnfin}
&\displaystyle\frac{1}{2}&\!\!\!\!\!\delta_t  \Vert \xi_m^n\Vert_{L^2}^2 + \frac{\Delta t}{2} \Vert \delta_t \xi_m^n\Vert_{L^2}^2 + \frac{\lambda_m}{2} \Vert\xi_m^n\Vert_{H^1}^2  \leq   C\int_{t_{n-1}}^{t_n}\left(\frac{h^{2(r_1+1)}}{\Delta t }\Vert m_t \Vert_{H^{r_1+1}}^2+ \Delta t \Vert m_{tt}(t)\Vert_{(H^1)'}^2\right)  dt \nonumber\\
&&\!\!\!\!\!\!\!\!\! +C (\Vert u^n\! -\! u^{n-1}\Vert_{(H^1)'}^2 + \Vert v^n \!- \!v^{n-1}\Vert_{(H^1)'}^2 +\Vert \xi^{n-1}_u\Vert_{L^2}^2+ \Vert \xi^{n-1}_v\Vert_{L^2}^2)  \Vert[v^{n},u^{n-1},v^{n-1}_h]\Vert_{L^\infty}^2\nonumber\\
&&\!\!\!\!\!\!\!\!\!+C(h^{2(r_2+1)} \|v^{n-1}\|_{ H^{r_2+1}}^2 + h^{2(r_3+1)} \|u^{n-1}\|_{ H^{r_3+1}}^2) \Vert[v^{n},u^{n-1},v^{n-1}_h]\Vert_{L^\infty}^2\nonumber\\
&&\!\!\!\!\!\!\!\!\!+ C ( h^{2r_1} +  h^{2(r_1+1)})  \|m^{n}\|_{ H^{r_1+1}}^2+ Ch^{2r_1} ( \Vert \delta_t m^n_h\Vert_{L^2}^2 +  \Vert  m^n_h\Vert_{L^2}^2).
\end{eqnarray}
\vspace{0.1 cm}

{\underline{{\it 2. Error estimate for} $v$}}\\

Testing (\ref{errv-int}) by $\xi^n_v\in \mathcal{X}_v$, using the H\"older and Young inequalities and (\ref{aprox01nn})$_{1-2}$, one has
\begin{eqnarray}\label{errc-int2}
&\displaystyle\frac{1}{2}&\!\!\!\!\!\delta_t  \Vert \xi_v^n\Vert_{L^2}^2 + \frac{\Delta t}{2} \Vert \delta_t \xi_v^n\Vert_{L^2}^2 + \alpha \int_{\Omega} m^n (\xi^n_v)^2 \ dx  =(\omega_v^n- \delta_t \theta^n_v-\alpha m^n \theta^n_v -\alpha v^n_h(\xi^n_m + \theta^n_m), \xi_v^n)\nonumber\\
&&\!\!\!\!\! \leq (\Vert \omega_v^n\Vert_{L^2} + \Vert \delta_t \theta^n_v\Vert_{L^2} +\alpha \Vert [m^n,v^n_h]\Vert_{L^\infty} (\Vert \theta_v^n\Vert_{L^2} + \Vert \xi_m^n\Vert_{L^2} + \Vert \theta_m^n\Vert_{L^2})) \Vert \xi_v^{n}\Vert_{L^2}\nonumber\\
&&\!\!\!\!\! \leq C \Vert \xi_v^{n}\Vert_{L^2}^2 + C (\Vert \omega_v^n\Vert_{L^2}^2 + \Vert \delta_t \theta^n_v\Vert_{L^2}^2 +\alpha^2 \Vert [m^n,v^n_h]\Vert_{L^\infty}^2 ( \Vert \theta_v^n\Vert_{L^2}^2 + \Vert \xi_m^n\Vert_{L^2}^2 + \Vert \theta_m^n\Vert_{L^2}^2))\nonumber\\
&&\!\!\!\!\! \leq C \Vert \xi_v^{n}\Vert_{L^2}^2 + C\int_{t_{n-1}}^{t_n}\left(\frac{h^{2(r_2+1)}}{\Delta t }\Vert v_t \Vert_{H^{r_2+1}}^2+ \Delta t \Vert v_{tt}(t)\Vert_{L^2}^2\right)  dt +C  \Vert [m^n,v^n_h]\Vert_{L^\infty}^2  \Vert \xi_m^n\Vert_{L^2}^2\nonumber\\
&& +C  \Vert [m^n,v^n_h]\Vert_{L^\infty}^2 ( h^{2(r_2+1)}\|v^{n}\|_{ H^{r_2+1}}^2  + h^{2(r_1+1)}\|m^{n}\|_{ H^{r_1+1}}^2).
\end{eqnarray}

\vspace{0.5 cm}
{\underline{{\it 3. Error estimate for} $u$}}\\

Taking $\bar{u}=\xi_{u}^n$ in (\ref{erru-int}), one gets
\begin{eqnarray}\label{erru-int2}
&\displaystyle\frac{1}{2}&\!\!\!\!\!\delta_t  \Vert \xi_u^n\Vert_{L^2}^2+\displaystyle\frac{\Delta t}{2}\Vert\delta_t  \xi_u^n\Vert_{L^2}^2 + D_{u}\Vert\nabla \xi_u^n\Vert_{L^2}^2 + \mu_u\int_{\Omega} v^n_h (\xi_u^n)^2 \ dx= (\omega_u^n - \delta_t \theta_u^n + D_u\theta_u^n, \xi_u^n)\nonumber\\
&&\!\!\!\!\!+(\chi(v^n)(u^n-u^{n-1}){\boldsymbol{\sigma}}^n +\chi(v^n)u^{n-1}({\boldsymbol{\sigma}}^n- {\boldsymbol{\sigma}}^{n-1}) +(\chi(v^n)-\chi(v^n_h))u^{n-1}{\boldsymbol{\sigma}}^{n-1} ,\nabla \xi_u^n)\nonumber\\
&&\!\!\!\!\! + (\chi(v^n_h) u^{n-1}(\xi^{n-1}_{\boldsymbol{\sigma}}+\theta^{n-1}_{\boldsymbol{\sigma}}),\nabla \xi_u^n)+ (\chi(v^n_h) (\xi^{n-1}_u+\theta^{n-1}_u) {\boldsymbol{\sigma}}^{n-1}_h,\nabla \xi_u^n)\nonumber\\
&&\!\!\!\!\!+\mu_u(u^n-u^{n-1}+\xi^{n-1}_u+\theta^{n-1}_u,\xi_u^n)  - \mu_u((u^n)^2-(u^{n-1})^2+(\xi^{n-1}_u+\theta^{n-1}_u)(u^{n-1}+u^{n-1}_h),\xi_u^n)\nonumber\\
&&\!\!\!\!\! - \mu_u(\theta^{n}_u v^{n}_h+u^{n}(\xi^n_v+\theta^{n}_v),\xi_u^n) =\sum_{k=1}^{7} J_k.
\end{eqnarray}
Then, using the H\"older and Young inequalities, (\ref{aprox01}) and (\ref{aprox01nn})$_{2-3}$,  the terms on the right hand side of (\ref{erru-int2}) are bounded as follows:
\begin{eqnarray}\label{Ea1au}
&J_1&\!\!\!\leq C\Vert \xi^n_u\Vert_{H^1} \Vert \omega^n_u\Vert_{(H^1)'} + C\Vert \xi^n_u\Vert_{L^2} (D_u\Vert \theta^n_u\Vert_{L^2}+\Vert \delta_t \theta^n_u\Vert_{L^2}) \nonumber\\
&&\!\!\!  \leq \displaystyle\frac{D_u}{8} \Vert \nabla \xi_{u}^n\Vert_{L^2}^2 + CD_u \Vert \xi_{u}^n\Vert_{L^2}^2 + \frac{C}{D_u} \int_{t_{n-1}}^{t_n}\!\! \left[\Delta t \Vert u_{tt}(t)\Vert_{(H^1)'}^2 + \frac{h^{2(r_3+1)}}{\Delta t }\Vert u_t \Vert_{H^{r_3+1}}^2\right]dt\nonumber\\
&& + CD_uh^{2(r_3+1)} \|u^{n}\|_{ H^{r_3+1}}^2,
\end{eqnarray}
\begin{eqnarray}\label{Ea1bu}
&J_2+J_3&\!\!\!\leq \displaystyle\frac{D_u}{8} \Vert \nabla \xi_{u}^n\Vert_{L^2}^2+ \frac{C}{D_u} \Vert[u^n - u^{n-1},{\boldsymbol{\sigma}}^n- {\boldsymbol{\sigma}}^{n-1}] \Vert_{L^2}^2\Vert[{\boldsymbol{\sigma}}^n,u^{n-1}] \Vert_{L^\infty}^2 \Vert\chi(v^n)\Vert_{L^\infty}^2\nonumber\\
&&\hspace{-1.3 cm}+  \frac{C}{D_u} ( \Vert\xi^{n}_v\Vert_{L^2}^2 + \Vert\xi^{n-1}_{\boldsymbol{\sigma}} \Vert_{L^2}^2 + h^{2(r_2+1)} \|v^{n}\|_{ H^{r_2+1}}^2 + h^{2(r_4+1)} \|{\boldsymbol{\sigma}}^{n-1}\|_{ H^{r_4+1}}^2)\Vert u^{n-1} \Vert_{L^\infty}^2\Vert[\chi(v^n_h),{\boldsymbol{\sigma}}^{n-1}] \Vert_{L^\infty}^2,
\end{eqnarray}
\begin{eqnarray}\label{Ea1du}
&J_5+J_7&\!\!\!  \leq C \Vert \xi_{u}^n\Vert_{L^2}^2 + C\mu_u^2 ( \Vert u^n \!- \!u^{n-1}\Vert_{L^2}^2+h^{2(r_3+1)}\! \|u^{n-1}\|_{H^{r_3+1}}^2+ \Vert \xi^{n-1}_{u}\Vert_{L^2}^2)\nonumber\\
&&\hspace{-0.6 cm} + C\mu_u^2  \Vert [v^n_h,u^n]\Vert_{L^\infty}^2 (h^{2(r_3+1)}\! \|u^{n}\|_{H^{r_3+1}}^2 + \Vert \xi^{n}_{v}\Vert_{L^2}^2+h^{2(r_2+1)}\! \|v^{n}\|_{H^{r_2+1}}^2).
\end{eqnarray}
Moreover, using the 3D interpolation inequalities
\begin{equation*}
\Vert u\Vert_{L^4} \leq  C\Vert u\Vert_{L^2}^{1/4}\Vert u\Vert_{H^1}^{3/4} \ \ \mbox{and} \ \
\Vert u\Vert_{L^3}\leq \Vert u\Vert_{L^2}^{1/2}\Vert u\Vert_{L^6}^{1/2} \  \ \mbox{for all} \ \ u\in H^1(\Omega),
\end{equation*}
as well as the H\"older and Young inequalities, (\ref{aprox01}), (\ref{aprox01-aNN-new})$_2$ and (\ref{aprox01-a}), one has
\begin{eqnarray}\label{Ea1cu}
& J_4&\!\!\! =  (\chi(v^n_h)\xi^{n-1}_u {\boldsymbol{\sigma}}^{n-1}_h,\nabla \xi_u^n) - (\chi(v^n_h)\theta^{n-1}_u \xi_{\boldsymbol{\sigma}}^{n-1},\nabla \xi_u^n)+ (\chi(v^n_h)\theta^{n-1}_u \mathbb{P}_{\boldsymbol{\sigma}}{\boldsymbol{\sigma}}^{n-1},\nabla \xi_u^n) \nonumber\\
&&\!\!\!\! \leq C(\Vert \xi^{n-1}_{u}\Vert_{L^2}^{1/4}\Vert \xi^{n-1}_{u}\Vert_{H^1}^{3/4} \Vert{\boldsymbol{\sigma}}^{n-1}_h\Vert_{L^4}+ \Vert \xi_{\boldsymbol\sigma}^{n-1}\Vert_{L^2}   \Vert \theta_u^{n-1}\Vert_{L^\infty} ) \Vert \nabla\xi_u^n\Vert_{L^2}\Vert \chi(v^n_h)\Vert_{L^\infty}  \nonumber\\
&&+C \Vert \mathbb{P}_{\boldsymbol{\sigma}} \boldsymbol{\sigma}^{n-1}\Vert_{L^\infty}  \Vert \theta_u^{n-1}\Vert_{L^2} \Vert \nabla\xi_u^n\Vert_{L^2}\Vert \chi(v^n_h)\Vert_{L^\infty}  \nonumber\\
&&\!\!\!\! \leq \displaystyle\frac{D_u}{8} \Vert \nabla \xi_{u}^n\Vert_{L^2}^2 +  \displaystyle\frac{D_u}{8} \Vert \nabla \xi^{n-1}_{u}\Vert_{L^2}^{2}+C D_u \Vert  \xi^{n-1}_{u}\Vert_{L^2}^{2}+ \frac{C}{D_u^7}  \Vert \xi^{n-1}_{u}\Vert_{L^2}^{2} \Vert\chi(v^n_h)\Vert_{L^\infty}^8 \Vert{\boldsymbol{\sigma}}^{n-1}_h\Vert_{L^4}^8 \nonumber\\
&&+\frac{C}{D_u}\Vert \chi(v^n_h)\Vert_{L^\infty}^2(\Vert u^{n-1}\Vert_{H^2}^2 \Vert \xi_{\boldsymbol{\sigma}}^{n-1}\Vert_{L^2}^2    +  h^{2(r_3 +1)} \Vert {\boldsymbol{\sigma}}^{n-1}\Vert_{H^2}^2   \Vert u^{n-1}\Vert_{H^{r_3 + 1}}^2),
\end{eqnarray}
\begin{eqnarray}\label{Ea1eu}
&J_6&\!\!\! \leq C \Vert \xi_{u}^n\Vert_{L^2}^2+ C\mu_u^2(\Vert u^n \!- \!u^{n-1}\!\Vert_{L^2}^2 + \Vert \xi^{n-1}_{u}\Vert_{L^2}^2 + h^{2(r_3+1)}\! \|u^{n-1}\|_{H^{r_3+1}}^2) \Vert [u^n + u^{n-1},u^{n-1}]\Vert_{L^\infty}^2 \nonumber\\
&& +C\mu_u^2\Vert \theta^{n-1}_u\Vert_{L^\infty}^2 \Vert \xi^{n-1}_u\Vert_{L^2}^2 + C\mu_u^2 h^{2(r_3 +1)} \Vert \mathbb{P}_u u^{n-1}\Vert_{L^\infty}^2 \Vert u^{n-1}\Vert_{H^{r_3 + 1}}^2 +\displaystyle\frac{D_u}{8} \Vert \nabla \xi_{u}^n\Vert_{L^2}^2\nonumber\\
&&+C D_u\Vert \xi_{u}^n\Vert_{L^2}^2 +  \displaystyle\frac{D_u}{8} \Vert \nabla \xi^{n-1}_{u}\Vert_{L^2}^{2}+C D_u \Vert  \xi^{n-1}_{u}\Vert_{L^2}^{2}+\frac{C\mu_u^4}{D_u^3} \Vert  \xi^{n-1}_{u}\Vert_{L^2}^{2} \Vert  u^{n-1}_{h}\Vert_{L^2}^{4}.
\end{eqnarray}
Therefore, from (\ref{erru-int2})-(\ref{Ea1eu}), using the inductive hypothesis (\ref{IndHyp}) and taking into account that $v^n_h\geq 0$, one arrives at
\begin{eqnarray}\label{errufin}
&\displaystyle\frac{1}{2}&\!\!\!\!\!\delta_t  \Vert \xi_u^n\Vert_{L^2}^2+\displaystyle\frac{\Delta t}{2}\Vert\delta_t  \xi_u^n\Vert_{L^2}^2 + \frac{D_{u}}{2}\Vert\nabla \xi_u^n\Vert_{L^2}^2 -  \frac{D_{u}}{4}\Vert\nabla \xi_u^{n-1}\Vert_{L^2}^2 \leq C \Vert \xi_{u}^n\Vert_{L^2}^2+ C \Vert \xi_{u}^{n-1}\Vert_{L^2}^2  \nonumber\\
&&+ C h^{2(r_3+1)} \|u^{n}\|_{ H^{r_3+1}}^2+ C\int_{t_{n-1}}^{t_n}\!\! \left[\Delta t \Vert u_{tt}(t)\Vert_{(H^1)'}^2 + \frac{h^{2(r_3+1)}}{\Delta t }\Vert u_t \Vert_{H^{r_3+1}}^2\right]dt \nonumber\\
&&+ C \Vert[u^n - u^{n-1},{\boldsymbol{\sigma}}^n- {\boldsymbol{\sigma}}^{n-1}] \Vert_{L^2}^2\Vert[{\boldsymbol{\sigma}}^n,u^{n-1}] \Vert_{L^\infty}^2 \Vert\chi(v^n)\Vert_{L^\infty}^2\nonumber\\
&&+  C ( \Vert\xi^{n}_v\Vert_{L^2}^2 + \Vert\xi^{n-1}_{\boldsymbol{\sigma}} \Vert_{L^2}^2 + h^{2(r_2+1)} \|v^{n}\|_{ H^{r_2+1}}^2 + h^{2(r_4+1)} \|{\boldsymbol{\sigma}}^{n-1}\|_{ H^{r_4+1}}^2)\Vert u^{n-1} \Vert_{L^\infty}^2\Vert[\chi(v^n_h),{\boldsymbol{\sigma}}^{n-1}] \Vert_{L^\infty}^2 \nonumber\\
&&+ C \Vert \xi^{n-1}_{u}\Vert_{L^2}^{2} \Vert\chi(v^n_h)\Vert_{L^\infty}^8 +C\Vert \chi(v^n_h)\Vert_{L^\infty}^2(\Vert u^{n-1}\Vert_{H^2}^2 \Vert \xi_{\boldsymbol{\sigma}}^{n-1}\Vert_{L^2}^2    +  h^{2(r_3 +1)} \Vert {\boldsymbol{\sigma}}^{n-1}\Vert_{H^2}^2   \Vert u^{n-1}\Vert_{H^{r_3 + 1}}^2)\nonumber\\
&&+C ( \Vert u^n \!- \!u^{n-1}\Vert_{L^2}^2+h^{2(r_3+1)}\! \|u^{n-1}\|_{H^{r_3+1}}^2+ \Vert \xi^{n-1}_{u}\Vert_{L^2}^2)\nonumber\\
&& + C  \Vert [v^n_h,u^n]\Vert_{L^\infty}^2 (h^{2(r_3+1)}\! \|u^{n}\|_{H^{r_3+1}}^2+ \Vert \xi^{n}_{v}\Vert_{L^2}^2+h^{2(r_2+1)}\! \|v^{n}\|_{H^{r_2+1}}^2)\nonumber\\
&&+ C(\Vert u^n \!- \!u^{n-1}\!\Vert_{L^2}^2 + \Vert \xi^{n-1}_{u}\Vert_{L^2}^2 +h^{2(r_3+1)}\! \|u^{n-1}\|_{H^{r_3+1}}^2) \Vert [u^n + u^{n-1},u^{n-1}]\Vert_{L^\infty}^2 \nonumber\\
&& +C\Vert \theta^{n-1}_u\Vert_{L^\infty}^2 \Vert \xi^{n-1}_u\Vert_{L^2}^2 + C h^{2(r_3 +1)} \Vert \mathbb{P}_u u^{n-1}\Vert_{L^\infty}^2 \Vert u^{n-1}\Vert_{H^{r_3 + 1}}^2 .
\end{eqnarray}
\vspace{0.5 cm}

{\underline{{\it 4. Error estimate for } $\boldsymbol{\sigma}$}}\\

Taking $\bar{\boldsymbol{\sigma}}=\xi_{\boldsymbol\sigma}^n$ in (\ref{errs-int}), using the H\"older and Young inequalities and (\ref{aprox01nn})$_{1-3}$, one arrives at
\begin{eqnarray}\label{errs-int2}
&\displaystyle\frac{1}{2}&\!\!\!\!\!\delta_t \Vert\xi_{\boldsymbol\sigma}^n\Vert_{L^2}^2+\displaystyle\frac{\Delta t}{2}\Vert\delta_t \xi_{\boldsymbol\sigma}^n\Vert_{L^2}^2 +\alpha\int_\Omega  m^{n}_h (\xi^{n}_{\boldsymbol{\sigma}})^2 \ dx= (\omega_{\boldsymbol\sigma}^n- \delta_t \theta_{\boldsymbol\sigma}^n,\xi_{\boldsymbol\sigma}^n) 
\nonumber\\
&&-\alpha(m^{n}_h \theta^{n}_{\boldsymbol{\sigma}},\xi_{\boldsymbol\sigma}^n)-\alpha( {\boldsymbol{\sigma}}^n (\xi^{n}_{m}+\theta^{n}_{m})+ v^n_h (\nabla \xi_m^n + \nabla \theta^n_m) + (\xi^n_v+\theta^n_v) \nabla m^n,\xi_{\boldsymbol\sigma}^n)\nonumber\\
&&\!\!\!\!\!\! \leq C \Vert \xi_{\boldsymbol{\sigma}}^{n}\Vert_{L^2}^2 + C\!\int_{t_{n-1}}^{t_n}\!\!\!\left(\frac{h^{2(r_4+1)}}{\Delta t }\Vert {\boldsymbol{\sigma}}_t \Vert_{H^{r_4+1}}^2+ \Delta t \Vert {\boldsymbol{\sigma}}_{tt}(t)\Vert_{L^2}^2\right)  dt  + C\alpha^2 \Vert \theta^n_{\boldsymbol{\sigma}}\Vert_{L^\infty}^2 \Vert \xi^n_m\Vert_{L^2}^2  \nonumber\\
&& +  C\alpha^2 h^{2(r_4+1)} \Vert\mathbb{P}_{m} m^n \Vert_{L^\infty}^2 \|\boldsymbol{\sigma}^{n}\|_{ H^{r_4+1}}^2+C \alpha^2 \Vert\boldsymbol{\sigma}^n\Vert_{L^\infty}^2 (\Vert \xi_{m}^{n}\Vert_{L^2}^2  + h^{2(r_1+1)}\|m^{n}\|_{ H^{r_1+1}}^2) + \frac{\lambda_m}{4} \Vert \xi^n_m\Vert_{H^1}^2\nonumber\\
&&  +C \alpha^2 \Vert[ v^{n}_h,\nabla m^n]\Vert_{L^\infty}^2 \Big(\frac{1}{\lambda_m}\Vert \xi_{\boldsymbol{\sigma}}^{n}\Vert_{L^2}^2  + h^{2r_1}   \|m^{n}\|_{ H^{r_1+1}}^2  +  \Vert \xi_{v}^{n}\Vert_{L^2}^2 + h^{2(r_2+1)}\|v^{n}\|_{ H^{r_2+1}}^2\Big). 
\end{eqnarray}
\vspace{0.5 cm}

{\underline{\it 5. Estimate for the terms $\Vert u^n - u^{n-1}\Vert_{(H^1)'}$, $\Vert v^n - v^{n-1}\Vert_{(H^1)'}$, $\Vert u^n - u^{n-1}\Vert_{L^2}$ and $\Vert{\boldsymbol{\sigma}^n} - \boldsymbol{\sigma}^{n-1}]\Vert_{L^2}$}}\\

Observe that the following estimates hold
\begin{equation}\label{mm1}
\displaystyle\Delta t \sum_{n=1}^r \Vert [u^n - u^{n-1},v^n-v^{n-1}]\Vert_{(H^1)'}^2\leq C(\Delta t)^4 \Vert [u_{tt},v_{tt}]\Vert^2_{L^2((H^1)')} + C(\Delta t)^2 \Vert [u_{t},v_{t}]\Vert^2_{L^2((H^1)')},
\end{equation}
\begin{equation}\label{mm1-2}
\displaystyle\Delta t \sum_{n=1}^r \Vert [u^n - u^{n-1},{\boldsymbol{\sigma}^n} - \boldsymbol{\sigma}^{n-1}]\Vert_{L^2}^2\leq C(\Delta t)^4 \Vert [u_{tt},{\boldsymbol{\sigma}}_{tt}]\Vert^2_{L^2(L^2)} + C(\Delta t)^2 \Vert [u_{t},{\boldsymbol{\sigma}}_{t}]\Vert^2_{L^2(L^2)}.
\end{equation}
Indeed, 
$$
\Vert \omega_u^n\Vert_{(H^1)'}=\Vert \delta_t u^n - (u_t)^n\Vert_{(H^1)'}=\Big\Vert \frac{1}{\Delta t} (u^n- u^{n-1}) - (u_t)^n\Big\Vert_{(H^1)'}\leq C(\Delta t)^{1/2} \Big(\int_{t_{n-1}}^{t_n}\Vert u_{tt}(t)\Vert_{(H^1)'}^2 dt\Big)^{1/2}, 
$$
where the last inequality was obtained as in (\ref{Ea1au}). Therefore, one can deduce 
$$
\Delta t \sum_{n=1}^r \Vert u^n - u^{n-1}\Vert_{(H^1)'}^2\leq C(\Delta t)^4 \Vert u_{tt}\Vert^2_{L^2((H^1)')} + C(\Delta t)^2 \Vert u_t\Vert^2_{L^2((H^1)')}.
$$
Analogously, we obtain the estimate for $v$ given in (\ref{mm1}) and the estimate for $[u,{\boldsymbol{\sigma}}]$ in the $L^2$-norm  given in (\ref{mm1-2}).

\vspace{0.7 cm}
Then, we can prove the following result:
 \begin{theo}\label{theo1N}
Assume (\ref{IndHyp}). Let  $[m^n_h,v^n_h,u^n_h,\boldsymbol{\sigma}^{n}_h]$ be any solution of the scheme \textbf{UVM$\sigma$} and consider a su\-ffi\-cien\-tly regular solution $[m, c, u, {\boldsymbol\sigma}]$ of (\ref{split}). There exists a constant $C$ (depending on the data of the problem (\ref{split})) such that  If $\Delta t C<\frac{1}{2}$, the following estimate for the discrete errors holds
\begin{equation}\label{EEtheo1}
\|[\xi^n_m,\xi^n_v,\xi^n_{u},\xi^n_{\boldsymbol{\sigma}}]\|_{l^{\infty}(L^2)} + \|[\xi^n_m,\xi^n_{u}]\|_{l^{2}(H^1)} \leq C(T) \Big(\Delta t +\max\{h^{r_1},h^{r_2+1},h^{r_3+1},h^{r_4+1}\}\Big).
\end{equation}
\end{theo}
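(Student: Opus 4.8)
The plan is to combine the four already-established discrete-error inequalities (\ref{errnfin}), (\ref{errc-int2}), (\ref{errufin}) and (\ref{errs-int2}) into a single one and close it with the discrete Gronwall Lemma \ref{Diego2}. First I would add the four estimates. The decisive structural point is that the coupling term $\frac{\lambda_m}{4}\Vert\xi_m^n\Vert_{H^1}^2$ on the right of the $\boldsymbol{\sigma}$-estimate (\ref{errs-int2})---forced on us because the $\boldsymbol{\sigma}$-equation carries $\nabla\xi_m^n$ and hence cannot be bounded without the full $H^1$ norm of $\xi_m^n$---is absorbed by the term $\frac{\lambda_m}{2}\Vert\xi_m^n\Vert_{H^1}^2$ on the left of the $m$-estimate (\ref{errnfin}), leaving a net dissipation $\frac{\lambda_m}{4}\Vert\xi_m^n\Vert_{H^1}^2$; this is exactly what produces the $l^2(H^1)$-control of $m$ claimed in (\ref{EEtheo1}). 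Since Lemma \ref{pospos} gives $v_h^n,m_h^n\geq 0$, the terms $\mu_u\int_\Omega v_h^n(\xi_u^n)^2\,dx$ and $\alpha\int_\Omega m_h^n(\xi_{\boldsymbol{\sigma}}^n)^2\,dx$ on the left are non-negative and may simply be discarded.

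Next I would multiply the resulting inequality by $2\Delta t$ and sum over $n=1,\dots,r$. Because the initialization sets each discrete unknown equal to the interpolant of the corresponding datum, all discrete errors vanish at $n=0$, so $\xi_z^0=0$ and the telescoping of the $\delta_t$-terms leaves only $\Vert[\xi_m^r,\xi_v^r,\xi_u^r,\xi_{\boldsymbol{\sigma}}^r]\Vert_{L^2}^2$. The only delicate point is the pair $\frac{D_u}{2}\Vert\nabla\xi_u^n\Vert_{L^2}^2-\frac{D_u}{4}\Vert\nabla\xi_u^{n-1}\Vert_{L^2}^2$ on the left of (\ref{errufin}) (whose negative part collects the $\frac{D_u}{8}\Vert\nabla\xi_u^{n-1}\Vert_{L^2}^2$ remainders of $J_4$ and $J_6$): after re-indexing and using $\xi_u^0=0$ it is bounded below by $\frac{D_u}{4}\sum_{n=1}^r\Vert\nabla\xi_u^n\Vert_{L^2}^2$, which supplies the $l^2(H^1)$-contribution of $u$.

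At this stage the coefficients still contain norms of the discrete solution, which I would bound uniformly in $(n,h,\Delta t)$: $\Vert v_h^n\Vert_{L^\infty}$ by Lemma \ref{uev}, and $\Vert m_h^n\Vert_{L^2}$ together with $\Delta t\sum_k\Vert\delta_t m_h^k\Vert_{L^2}^2$ by Lemmas \ref{uem1}--\ref{uem2}, while $\Vert u_h^{n-1}\Vert_{L^2}$ and $\Vert\boldsymbol{\sigma}_h^{n-1}\Vert_{L^4}$ (the latter entering through $\Vert\boldsymbol{\sigma}_h^{n-1}\Vert_{L^4}^8$ in $J_4$) are controlled by the inductive hypothesis (\ref{IndHyp}); the norms of the exact solution are finite by the assumed regularity. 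The consistency terms $\int_{t_{n-1}}^{t_n}\Delta t\Vert(\cdot)_{tt}\Vert^2\,dt$ sum to $O((\Delta t)^2)$, and the increments $\Vert u^n-u^{n-1}\Vert$, $\Vert\boldsymbol{\sigma}^n-\boldsymbol{\sigma}^{n-1}\Vert$ are also $O((\Delta t)^2)$ after summation by (\ref{mm1})--(\ref{mm1-2}). The interpolation terms produce $O(h^{2r_1})$ (from $\Vert\nabla\theta_m^n\Vert_{L^2}^2$ and the mass-lumping bound (\ref{MassL})) together with $O(h^{2(r_2+1)})$, $O(h^{2(r_3+1)})$, $O(h^{2(r_4+1)})$, the lowest power $h^{2r_1}$ being dominant.

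It then remains to move the right-hand-side error terms evaluated at the current level $n$---namely $\Vert\xi_v^n\Vert_{L^2}^2$, $\Vert\xi_u^n\Vert_{L^2}^2$, $\Vert\xi_{\boldsymbol{\sigma}}^n\Vert_{L^2}^2$ (including the $\frac{1}{\lambda_m}\Vert\xi_{\boldsymbol{\sigma}}^n\Vert_{L^2}^2$ of (\ref{errs-int2})) and the current-level $\Vert\xi_m^n\Vert_{L^2}^2$ arising in (\ref{errc-int2})--(\ref{errs-int2})---to the left. Scaled by $\Delta t$, the top index $n=r$ of these terms is absorbed into $\Vert[\xi_m^r,\dots]\Vert_{L^2}^2$ precisely under the smallness condition $\Delta t\,C<\tfrac12$, after which every surviving error quantity on the right is evaluated at a level $\leq r-1$. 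Setting $d^n=\Vert[\xi_m^n,\xi_v^n,\xi_u^n,\xi_{\boldsymbol{\sigma}}^n]\Vert_{L^2}^2$, the inequality is in the form of Lemma \ref{Diego2}, whose application followed by a square root gives (\ref{EEtheo1}) with rate $\Delta t+\max\{h^{r_1},h^{r_2+1},h^{r_3+1},h^{r_4+1}\}$. I expect the main obstacle to be the bookkeeping of the haptotactic coupling: ensuring that the $\nabla\xi_m^n$ entering the $\boldsymbol{\sigma}$-equation remains absorbable (which is why the $m$- and $\boldsymbol{\sigma}$-estimates must be summed jointly rather than closed separately), and checking that the genuinely nonlinear term $J_4$ can be handled using only the $L^4$-norm of $\boldsymbol{\sigma}_h^{n-1}$, so that the inductive hypothesis (\ref{IndHyp}) truly suffices.
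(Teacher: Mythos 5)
Your proposal is correct and follows essentially the same route as the paper: sum the four discrete-error inequalities (\ref{errnfin}), (\ref{errc-int2}), (\ref{errufin}), (\ref{errs-int2}), multiply by $\Delta t$ and telescope using $\xi^0_z=0$, control the coefficients via Lemmas \ref{uev}--\ref{uem2}, the inductive hypothesis (\ref{IndHyp}), the regularity of the exact solution and (\ref{mm1})--(\ref{mm1-2}), absorb the current-level term under $\Delta t\,C<\tfrac12$, and close with the discrete Gronwall Lemma \ref{Diego2}. Your explicit tracking of the two absorptions (the $\tfrac{\lambda_m}{4}\Vert\xi_m^n\Vert_{H^1}^2$ from the $\boldsymbol{\sigma}$-estimate into the $m$-dissipation, and the re-indexed $\tfrac{D_u}{4}\Vert\nabla\xi_u^{n-1}\Vert_{L^2}^2$ terms) only makes explicit what the paper's one-line summation leaves implicit.
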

\begin{proof}
Adding (\ref{errnfin}), (\ref{errc-int2}), (\ref{errufin}) and (\ref{errs-int2}), multiplying the resulting expression by $\Delta t$, adding from $k=1$ to $k=n$, taking into account that $m^n, m^n_h\geq 0$, using Lemmas \ref{uev}-\ref{uem2}, estimates (\ref{mm1})-(\ref{mm1-2}) and the regularity for the exact solution given in Theroem \ref{TRG}, and recalling that $[\xi^0_m,\xi^0_v,\xi^0_{u},\xi^0_{\boldsymbol{\sigma}}]=[0,0, 0,{\bf 0}]$, one has
\begin{eqnarray}\label{EEW1}
&\Vert [\xi^n_m, \xi^n_v, \xi^n_u,\xi^n_{\boldsymbol{\sigma}}]\Vert_{L^2}^2&\!\!\!\! + \Delta t  \sum_{k=1}^n \Big(\frac{D_{u}}{2}\Vert\nabla \xi_u^k\Vert_{L^2}^2 + \frac{\lambda_m}{2}  \Vert\xi_m^k\Vert_{H^1}^2\Big)\leq C_1((\Delta t)^2+(\Delta t)^4) \nonumber\\
&&\hspace{-2.5 cm} +C_2(h^{2(r_1+1)}+ h^{2(r_2+1)} + h^{2(r_3+1)}+ h^{2(r_4+1)}+ h^{2r_1}) + C_3\Delta t \sum_{k=1}^n \Vert [\xi^{k-1}_m, \xi^{k-1}_v, \xi^{k-1}_u,\xi^{k-1}_{\boldsymbol{\sigma}}]\Vert_{L^2}^2\nonumber\\
&& \hspace{-2.5 cm} +C_4 \Delta t \Vert [\xi^n_m, \xi^n_v, \xi^n_u,\xi^n_{\boldsymbol{\sigma}}]\Vert_{L^2}^2.
\end{eqnarray}
Therefore, if $\Delta t$ is small enough such that $\frac{1}{2}-C_4\Delta t>0$, by appying Lemma \ref{e-Diego2-1} to (\ref{EEW1}), (\ref{EEtheo1}) is concluded.
\end{proof}
As a consequence of Theorem \ref{theo1N}, the following results hold:
\begin{coro}
Under hypotheses of Theorem \ref{theo1N}, the following estimates for the total errors hold:
\begin{equation*}
\|[e^n_m,e^n_v,e^n_{u},e^n_{\boldsymbol{\sigma}}]\|_{l^{\infty}(L^2)}  \leq C(T) \Big(\Delta t +\max\{h^{r_1},h^{r_2+1},h^{r_3+1},h^{r_4+1}\}\Big),
\end{equation*}
\begin{equation*}
\|[e^n_m,e^n_{u}]\|_{l^{2}(H^1)}  \leq C(T) \Big(\Delta t +\max\{h^{r_1},h^{r_2+1},h^{r_3},h^{r_4+1}\}\Big).
\end{equation*}
\end{coro}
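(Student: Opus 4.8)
The plan is to exploit the error splitting $e^n_z = \theta^n_z + \xi^n_z$ already recorded in (\ref{u1a})--(\ref{u1asig}), bound the discrete parts $\xi^n_z$ by Theorem \ref{theo1N}, bound the interpolation parts $\theta^n_z$ by the approximation properties of Subsection \ref{INOP1}, and conclude both corollary estimates from the triangle inequality. First I would establish the $l^{\infty}(L^2)$-bound. For each variable the triangle inequality gives $\|e^n_z\|_{l^{\infty}(L^2)} \leq \|\theta^n_z\|_{l^{\infty}(L^2)} + \|\xi^n_z\|_{l^{\infty}(L^2)}$. By (\ref{aprox01}) and (\ref{aprox01nn}), together with the regularity of the exact solution furnished by Theorem \ref{TRG}, the interpolation errors in the $L^2$-norm satisfy $\|\theta^n_m\|_{L^2} \leq C h^{r_1+1}$, $\|\theta^n_v\|_{L^2} \leq C h^{r_2+1}$, $\|\theta^n_u\|_{L^2} \leq C h^{r_3+1}$ and $\|\theta^n_{\boldsymbol{\sigma}}\|_{L^2} \leq C h^{r_4+1}$, uniformly in $n$. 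Since $h^{r_1+1} \leq C h^{r_1}$ for $h$ bounded, each of these interpolation orders is dominated by the corresponding term inside the maximum in (\ref{EEtheo1}); combining with the discrete-error bound of Theorem \ref{theo1N} yields the first estimate, with the same maximum $\max\{h^{r_1},h^{r_2+1},h^{r_3+1},h^{r_4+1}\}$.

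Next I would treat the $l^{2}(H^1)$-bound, which concerns only $m$ and $u$. Writing again $e^n_z = \theta^n_z + \xi^n_z$, the discrete parts $\|[\xi^n_m,\xi^n_u]\|_{l^{2}(H^1)}$ are controlled by Theorem \ref{theo1N} by $C(T)(\Delta t + \max\{h^{r_1},h^{r_2+1},h^{r_3+1},h^{r_4+1}\})$. For the interpolation parts, the $H^1$-estimates in (\ref{aprox01}) and (\ref{aprox01nn}) lose one power of $h$ relative to the $L^2$-estimates, giving $\|\theta^n_m\|_{H^1} \leq C h^{r_1}$ and $\|\theta^n_u\|_{H^1} \leq C h^{r_3}$, uniformly in $n$ and hence also in the $l^2$-norm over $n \leq N$ (as $N\Delta t = T$). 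The term $h^{r_3}$ now exceeds the discrete contribution $h^{r_3+1}$, so after the triangle inequality the $u$-contribution to the spatial order degrades from $h^{r_3+1}$ to $h^{r_3}$, while the $m$-contribution remains $h^{r_1}$; this produces the maximum $\max\{h^{r_1},h^{r_2+1},h^{r_3},h^{r_4+1}\}$ of the second estimate.

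There is no genuine obstacle here: the statement is a direct consequence of the triangle inequality combined with the previously established bounds. The only point requiring care is the bookkeeping of interpolation orders in the two different norms --- in particular recognizing that passing from the $L^2$- to the $H^1$-norm costs one power of $h$ in the interpolation estimate for $u$, which is exactly what accounts for the exponent $r_3$ (rather than $r_3+1$) appearing in the $l^{2}(H^1)$-bound.
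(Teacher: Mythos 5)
Your proposal is correct and follows exactly the argument the paper intends: the corollary is stated as an immediate consequence of Theorem \ref{theo1N}, obtained by the triangle inequality applied to the splittings (\ref{u1a})--(\ref{u1asig}), with the interpolation errors bounded via (\ref{aprox01}) and (\ref{aprox01nn}) and the discrete errors via (\ref{EEtheo1}). Your bookkeeping of the orders is also the right explanation of the two maxima, including the degradation from $h^{r_3+1}$ to $h^{r_3}$ in the $l^2(H^1)$ estimate caused by the loss of one power of $h$ in the $H^1$-interpolation bound for $u$, while $h^{r_1}$ already dominates the $m$-contribution in both norms.
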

\begin{coro}
Under hypotheses of Theorem \ref{theo1N}. Then, $[v^n_h,\boldsymbol{\sigma}^{n}_h]$ converges to $[v,{\boldsymbol\sigma}]$ in 
	$L^\infty(L^2)$-norm and  $[m^n_h,u^n_h]$ converges to $[m, u]$ in 
	$L^\infty(L^2),L^2(H^1)$-norms, when the parameters $\Delta t$ and $h$ go to $0$.
\end{coro}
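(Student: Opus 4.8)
The plan is to read off the convergence directly from the total-error estimates of the preceding corollary. Those estimates give
\begin{equation*}
\|[e^n_m,e^n_v,e^n_{u},e^n_{\boldsymbol\sigma}]\|_{l^{\infty}(L^2)} + \|[e^n_m,e^n_{u}]\|_{l^{2}(H^1)} \leq C(T)\Big(\Delta t +\max\{h^{r_1},h^{r_2+1},h^{r_3},h^{r_4+1}\}\Big),
\end{equation*}
and since $r_1=1$ and $r_i\geq 1$ for $i=2,3,4$, every exponent on the right-hand side is strictly positive. Hence the bound tends to $0$ as $(\Delta t,h)\to(0,0)$, which is exactly the convergence of $[v^n_h,{\boldsymbol\sigma}^n_h]$ to $[v,{\boldsymbol\sigma}]$ in $L^\infty(L^2)$ and of $[m^n_h,u^n_h]$ to $[m,u]$ in $L^\infty(L^2)\cap L^2(H^1)$. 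Read under the standing assumption (\ref{IndHyp}), this one-line passage to the limit is all the statement requires.

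The point that deserves care lies in the surrounding analysis rather than in the corollary itself: the error estimate (\ref{EEtheo1}) is conditional on the inductive hypothesis (\ref{IndHyp}), so to make the convergence genuinely unconditional one must close the induction. I would argue by a bootstrap. Assuming (\ref{IndHyp}) at step $n-1$, Theorem \ref{theo1N} together with the decompositions (\ref{u1au})--(\ref{u1asig}) is available up to step $n$, and one recovers the bounds at step $n$ from $u^n_h=u^n-\theta^n_u-\xi^n_u$ and ${\boldsymbol\sigma}^n_h={\boldsymbol\sigma}^n-\theta^n_{\boldsymbol\sigma}-\xi^n_{\boldsymbol\sigma}$: the exact parts $u^n,{\boldsymbol\sigma}^n$ are controlled by the regularity of Theorem \ref{TRG}, the interpolation errors $\theta^n_u,\theta^n_{\boldsymbol\sigma}$ by (\ref{aprox01})--(\ref{aprox01nn}), and the discrete errors $\xi^n_u,\xi^n_{\boldsymbol\sigma}$ are made small by (\ref{EEtheo1}). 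Choosing $K$ larger than the norms of the continuous solution and taking $(\Delta t,h)$ small enough then reproduces (\ref{IndHyp}) at step $n$.

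The main obstacle is precisely the $L^4$ part of (\ref{IndHyp}): estimate (\ref{EEtheo1}) controls $\xi^n_{\boldsymbol\sigma}$ only in $L^2$, whereas the induction asks for an $L^4$ bound on ${\boldsymbol\sigma}^n_h$. I would bridge this gap with the inverse inequality $\|\xi^n_{\boldsymbol\sigma}\|_{L^4}\leq C h^{-d(1/2-1/4)}\|\xi^n_{\boldsymbol\sigma}\|_{L^2}=C h^{-d/4}\|\xi^n_{\boldsymbol\sigma}\|_{L^2}$, which introduces a negative power of $h$. For the resulting quantity $C h^{-d/4}\big(\Delta t+\max\{h^{r_1},\dots,h^{r_4+1}\}\big)$ to still vanish, the convergence rates must dominate the inverse-inequality factor, i.e. one needs a mild mesh/time constraint such as $\Delta t\leq C h^{d/4}$ together with exponents satisfying $h^{r_j-d/4}\to0$. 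Checking that this constraint is compatible with the hypotheses, so that the induction genuinely closes, is the delicate step; the qualitative convergence asserted here then follows at once from the unconditional form of the error estimates.
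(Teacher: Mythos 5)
Your proposal is correct and matches the paper's treatment: the corollary is an immediate passage to the limit in the total-error estimates (the paper gives no separate proof), and your bootstrap for closing the inductive hypothesis (\ref{IndHyp}) via the inverse inequality $\Vert \xi^n_{\boldsymbol{\sigma}}\Vert_{L^4}\leq h^{-p}\Vert \xi^n_{\boldsymbol{\sigma}}\Vert_{L^2}$ with $p=1/2$ (2D) and $p=3/4$ (3D) under a mesh condition of the type $\Delta t\leq h$ is exactly the argument the paper carries out after the corollaries.
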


Finally, it is clear that the error estimates were derived under the inductive hypothesis (\ref{IndHyp}). Now we have to check it. 
We derive (\ref{IndHyp}) by using (\ref{EEtheo1}) recursively.  Observe that 
$$\Vert [u^{n-1},{\boldsymbol{\sigma}}^{n-1}]\Vert_{L^2\times L^4}\leq \Vert [u,{\boldsymbol{\sigma}}]\Vert_{L^\infty(H^1 \times H^1)}:=C_0 \qquad \forall n\geq 1,$$
and therefore, using the stability properties (\ref{aprox01-aNN-new})$_1$ and (\ref{aprox01-aNN}), one has \begin{equation}\label{INNa}
\Vert [u^{0}_h,{\boldsymbol{\sigma}}^0_h]\Vert_{L^2\times L^4}= \Vert [\mathbb{P}_{u} u_0,\mathbb{P}_{\boldsymbol{\sigma}} {\boldsymbol{\sigma}}_0] \Vert_{L^2\times L^4}\leq C_0\leq C_0+1:=K
\end{equation}
and
\begin{equation*}
\Vert [u^{n-1}_h,{\boldsymbol{\sigma}}^{n-1}_h]\Vert_{L^2\times L^4}\leq \Vert [\xi^{n-1}_{u},\xi_{\boldsymbol{\sigma}}^{n-1}]\Vert_{L^2\times L^4}+\Vert [\mathbb{P}_{u}u^{n-1},\mathbb{P}_{\boldsymbol{\sigma}} {\boldsymbol{\sigma}}^{n-1}]\Vert_{L^2\times L^4}\leq \Vert [\xi^{n-1}_{u},\xi_{\boldsymbol{\sigma}}^{n-1}]\Vert_{L^2\times L^4} + C_0.
\end{equation*}
Then, it is enough to show that $\Vert [\xi^{n-1}_{u},\xi_{\boldsymbol{\sigma}}^{n-1}]\Vert_{L^2\times L^4}\leq 1,$ for each $n\geq 2.$ Notice that from (\ref{EEtheo1}) and using (\ref{INNa}), one has
\begin{eqnarray}\label{DD1}
&\Vert \xi^1_{u}\Vert_{L^2}&\!\!\!\!\leq C(T,\Vert u^0_h\Vert_{L^2},\Vert {\boldsymbol{\sigma}}^0_h\Vert_{L^4})(\Delta t+\max\{h^{r_1},h^{r_2+1},h^{r_3+1},h^{r_4+1}\})\nonumber\\
&&\!\!\!\! \leq C(T,K)(\Delta t+\max\{h^{r_1},h^{r_2+1},h^{r_3+1},h^{r_4+1}\})
\end{eqnarray}
and
\begin{eqnarray}\label{j1}
&\Vert \xi^1_{\boldsymbol{\sigma}}\Vert_{L^4}&\!\!\!\!\!\leq \frac{1}{h^p}\Vert \xi^1_{\boldsymbol{\sigma}}\Vert_{L^2}\leq C(T,\Vert u^0_h\Vert_{L^2},\Vert {\boldsymbol{\sigma}}^0_h\Vert_{L^4})\frac{1}{h^p}(\Delta t+\max\{h^{r_1},h^{r_2+1},h^{r_3+1},h^{r_4+1}\})\nonumber\\
&&\!\!\!\!\! \leq C(T,K)\Big(\frac{(\Delta t)^p}{h^p}(\Delta t)^{1-p}+\max\{h^{r_1-p},h^{r_2+1-p},h^{r_3+1-p},h^{r_4+1-p}\}\Big),
\end{eqnarray}
where in (\ref{j1}) the inverse inequality  $\Vert \xi^n_{\boldsymbol{\sigma}}\Vert_{L^4}\leq h^{-p} \Vert \xi^n_{\boldsymbol{\sigma}}\Vert_{L^2}$ (with $p=1/2$ in 2D and $p=3/4$ in 3D) was used. Therefore, taking $\Delta t$ and $h$ small enough with $\Delta t\leq h$, from (\ref{DD1})-(\ref{j1}) one can conclude that $\Vert  [\xi^{1}_{u},\xi^{1}_{\boldsymbol{\sigma}}]\Vert_{L^2\times L^4}\leq 1$, which implies $\Vert [{u}^{1}_h,{\boldsymbol{\sigma}}^{1}_h]\Vert_{L^2\times L^4}\leq K$.  Analogously, by using $\Vert [{u}^{1}_h,{\boldsymbol{\sigma}}^{1}_h]\Vert_{L^2\times L^4}\leq K$, one can obtain $\Vert  [\xi^{2}_{u},\xi^{2}_{\boldsymbol{\sigma}}]\Vert_{L^2\times L^4}\leq 1$, and therefore, $\Vert [{u}^{2}_h,{\boldsymbol{\sigma}}^{2}_h]\Vert_{L^2\times L^4}\leq K$. Arguing recursively we conclude that $\Vert [{u}^{n-1}_h,{\boldsymbol{\sigma}}^{n-1}_h]\Vert_{L^2\times L^4}\leq K$, for all $n\geq 1$.

\section{Numerical simulations} 
In this section, we present some numerical experiments in order to verify the good behavior of the Schemes \textbf{UVM$\sigma$} and \textbf{UVMs}. All simulations were computed by using the software Freefem++. We have considered the discrete spaces $\mathcal{X}_m, \mathcal{X}_v,\mathcal{X}_u,\mathcal{X}_{\boldsymbol\sigma},\mathcal{X}_s$ approximated by $\mathbb{P}_1-$continuous FE, the rectangular domain $\Omega=[0,1]\times[0,1]$ and an unstructured mesh.\\

 The aim of these experiments is to see the spatio-temporal evolution of the invasion of the extracellular matrix by the cancer cells, considering two different types of extracellular matrix (homogeneous and heterogeneous), comparing the behavior when there is absence and presence of cell proliferation. These experiments are motivated by the two dimensional numerical simulations presented in \cite{Anderson}, which can be compared with experimental and clinical observations. For this reason, we have considered the values for the parameters used in \cite{Anderson}, that is, $D_m=0.001$, $\rho_m=0$, $\mu_m=0.1$, $\alpha=10$, $D_u=0.001$ and $\chi=0.005$ in (\ref{KNS}). Moreover, the discrete parameters are taken $\Delta t=10^{-2}$ and $h=1/50$; and the simulations results are showed for the times $t = 1,5,10,15$.\\
 
 \underline{Test 1. Homogeneous extracellular matriz:} The aim of this experiment is to show the behavior of the schemes \textbf{UVM$\sigma$} and \textbf{UVMs} in the context of a homogeneous extracellular matriz (see Figure \ref{fig:RBM1}(b)). In order to simulate the absence and presence of cell proliferation , we consider $\mu_u=0$ and $\mu_u=2$ respectively; and we consider the following initial conditions (see Figure \ref{fig:RBM1}):
$$
u_0= \text{exp} (-400(x-0.5)^2-400(y-0.5)^2),
$$
$$
m_0 =0.5u_0 \ \ \mbox{ and } \ \ v_0 = 1-u_0.
$$

\begin{minipage}{\textwidth}
	\begin{flushleft}
		\begin{tabular}{ccc}

		 \includegraphics[width=43mm]{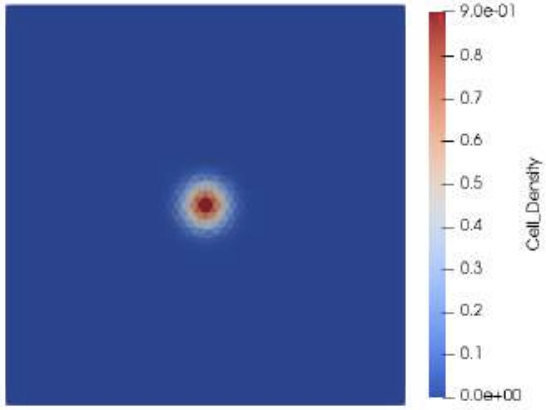} & \hspace{-2mm} \includegraphics[width=43mm]{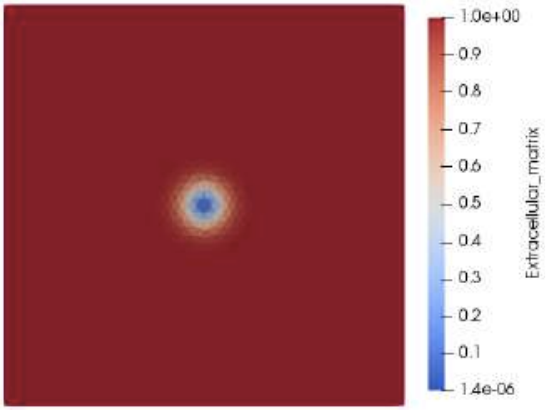} & \hspace{-2mm} \includegraphics[width=43mm]{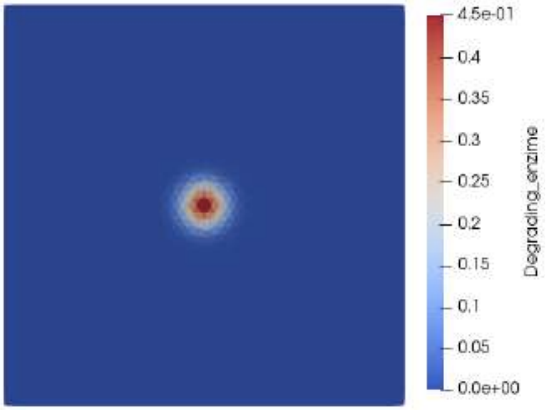}		  \\[2mm]
		  (a) {\bf Cell density} &  (b) {\bf Extracellular matrix} & (c)  {\bf Degrading enzyme}
		\end{tabular}
	\end{flushleft}
	\vspace{-0.5 cm}
\figcaption{Initial conditions in Test 1} \label{fig:RBM1}
\end{minipage}\\

The evolution results for the case  $\mu_u=0$ are showed in Figures \ref{fig:RBM2} and \ref{fig:RBM3} for the schemes \textbf{UVM$\sigma$} and \textbf{UVMs}, respectively. The behavior of the cell density reproduces the pattern observed in \cite{Anderson}. The ring of cells that makes up the tumor body at the beginning invades the extracellular matrix, while a correlated increase of the degrading enzyme occurs. In this case, we can see some of the main characteristics of the invasion of a solid
tumor in its avascular phase: diffusion, random motility, movement along the gradient of the density of adhesive components of
extracellular matrix (haptotaxis) and extracellular matrix degradation. The numerical simulations for both schemes show a very similar behavior; with the difference that the cell density computed with the scheme \textbf{UVM$\sigma$} takes negative values (very small) in some times, while in the scheme \textbf{UVMs} the cell density is always positive (see Figures \ref{fig:RBM2}, \ref{fig:RBM3} and \ref{fig:MM}). This fact is in agreement with the theoretical positivity results obtained in Subsection \ref{SSPW}.
 
\begin{minipage}{\textwidth}
	\begin{flushleft}
		\begin{tabular}{cccc}
		 {\bf Time} & {\bf Cell density} &  {\bf Extracellular matrix} & {\bf Degrading enzyme}  \\[2mm]
		 $t=1$ &		\includegraphics[width=43mm]{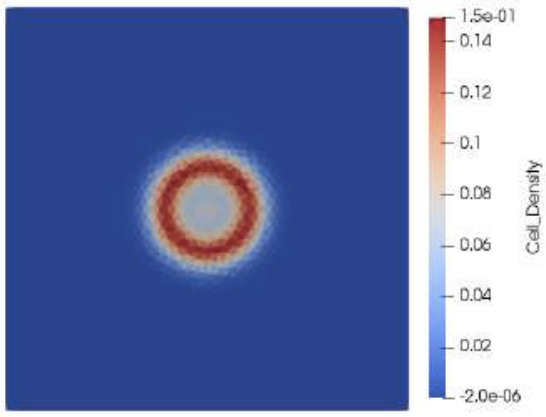} & \includegraphics[width=43mm]{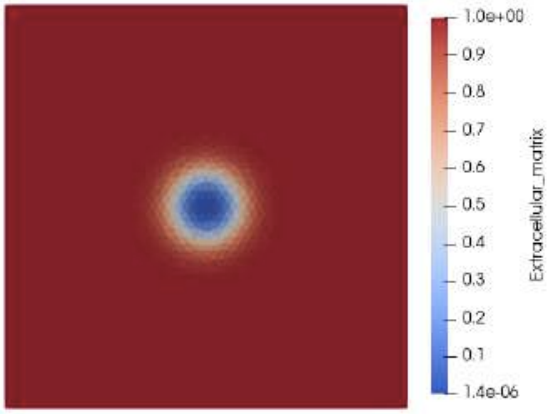} & \includegraphics[width=43mm]{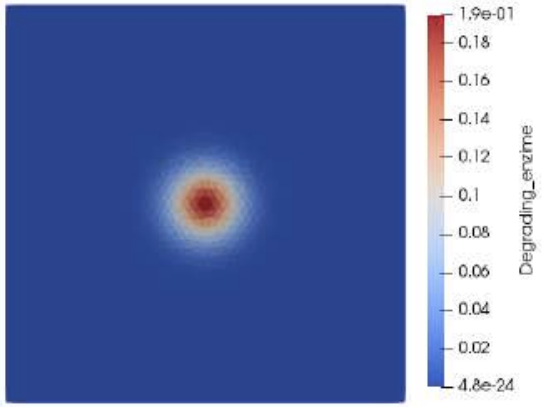}  \\[3mm]
		$t=5$ & \includegraphics[width=43mm]{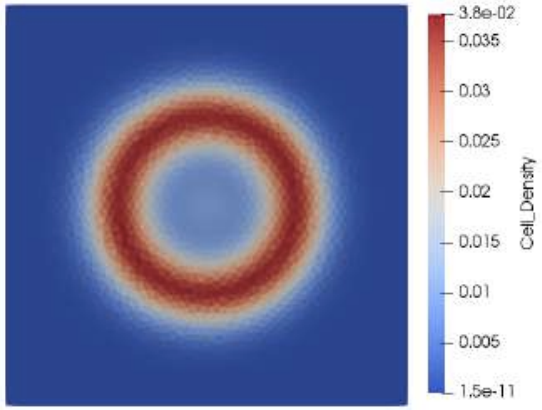} & \includegraphics[width=43mm]{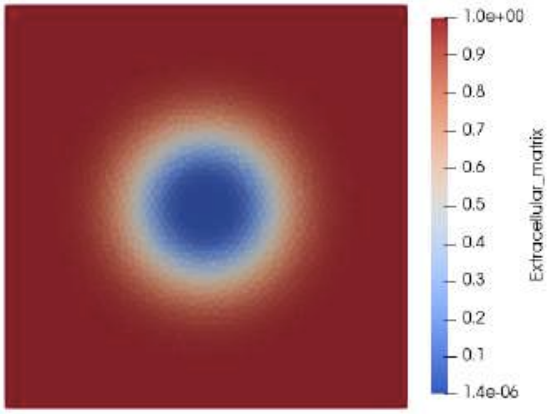} &\includegraphics[width=43mm]{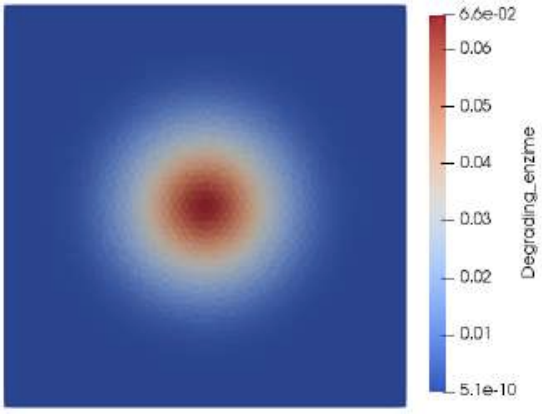} \\[3mm]
		$t=10$ & \includegraphics[width=43mm]{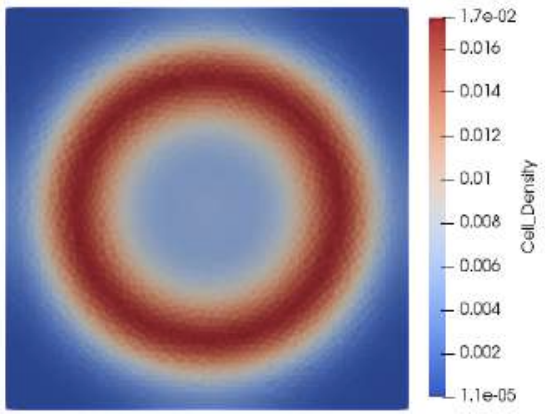} & \includegraphics[width=43mm]{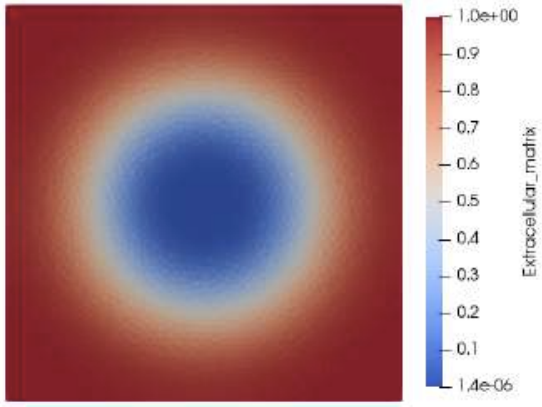} &\includegraphics[width=43mm]{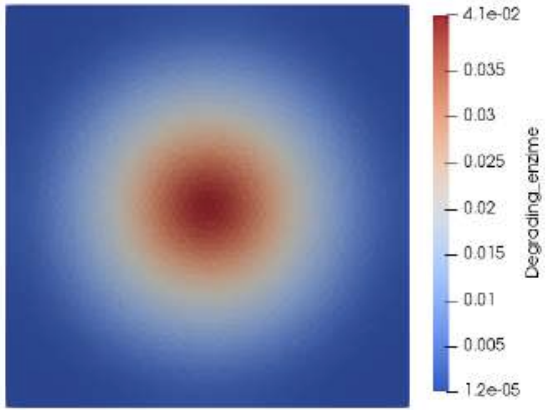} \\[3mm]
		$t=15$ & \includegraphics[width=43mm]{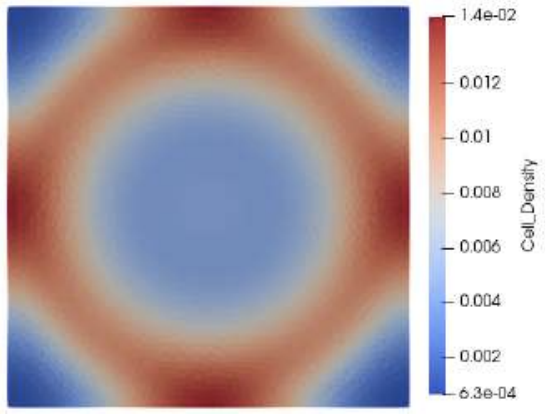} & \includegraphics[width=43mm]{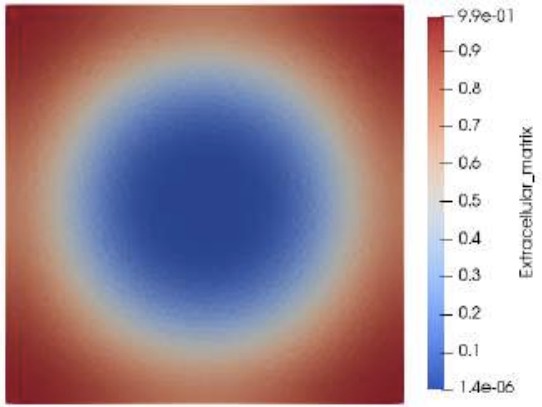} &\includegraphics[width=43mm]{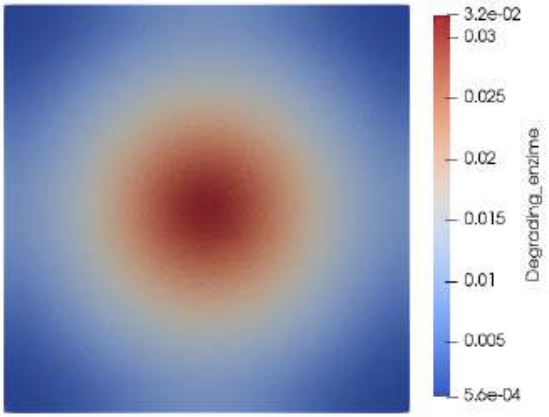}
		\end{tabular}
	\end{flushleft}
	\vspace{-0.5 cm}
\figcaption{Behavior of the scheme \textbf{UVM$\sigma$} in Test 1 for $\mu_u=0$.} \label{fig:RBM2}
\end{minipage}
\\
\\

\begin{minipage}{\textwidth}
	\begin{flushleft}
		\begin{tabular}{cccc}
		 {\bf Time} & {\bf Cell density} &  {\bf Extracellular matrix} & {\bf Degrading enzyme}  \\[2mm]
		 $t=1$ &		\includegraphics[width=43mm]{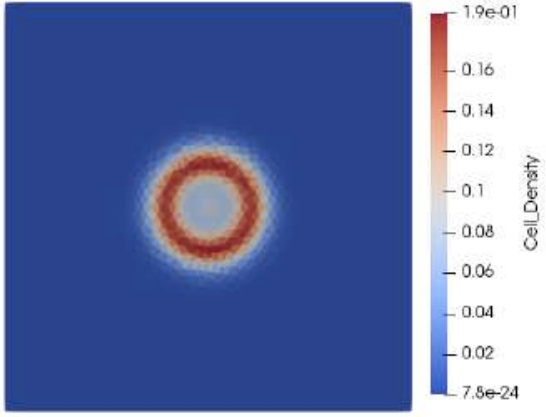} & \includegraphics[width=43mm]{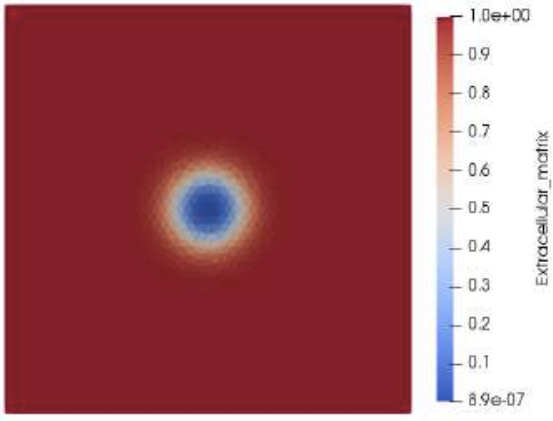} & \includegraphics[width=43mm]{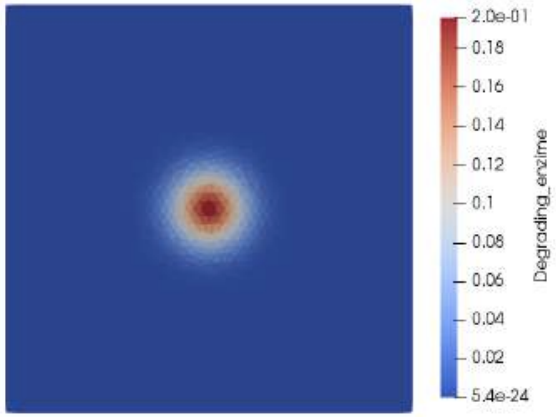}  \\[3mm]
		$t=5$ & \includegraphics[width=43mm]{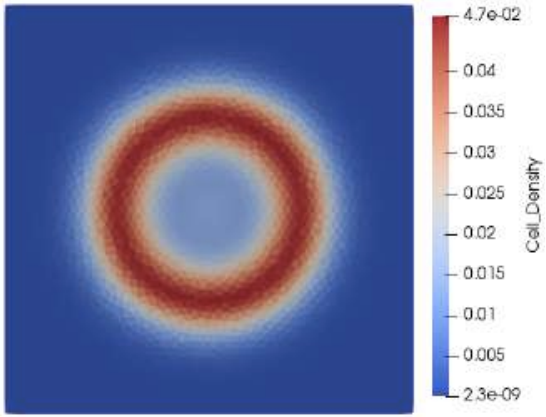} & \includegraphics[width=43mm]{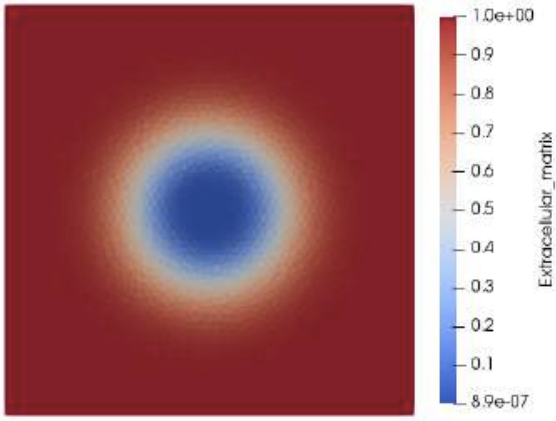} &\includegraphics[width=43mm]{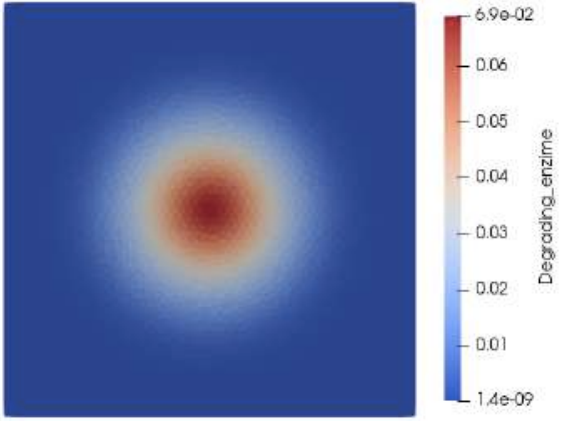} \\[3mm]
		$t=10$ & \includegraphics[width=43mm]{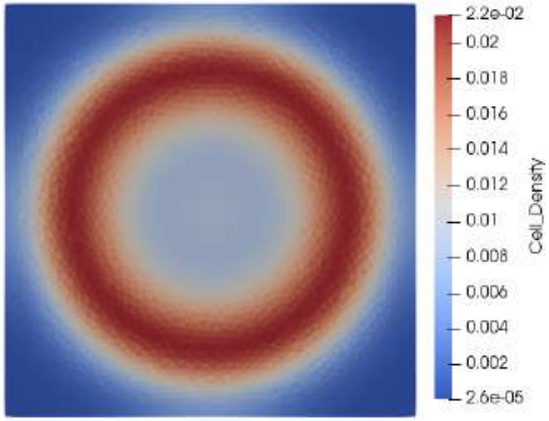} & \includegraphics[width=43mm]{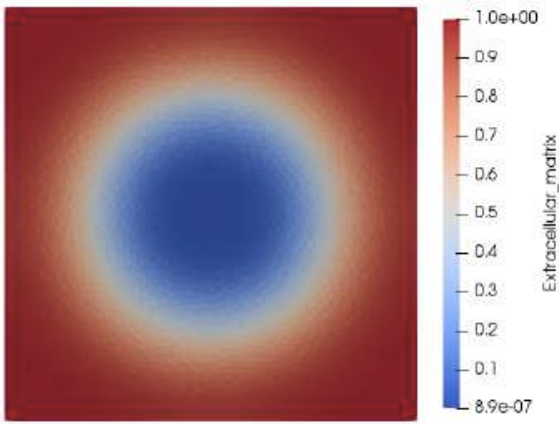} &\includegraphics[width=43mm]{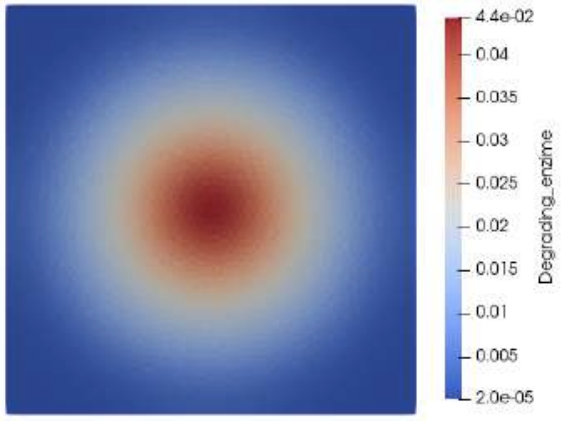} \\[3mm]
		$t=15$ & \includegraphics[width=43mm]{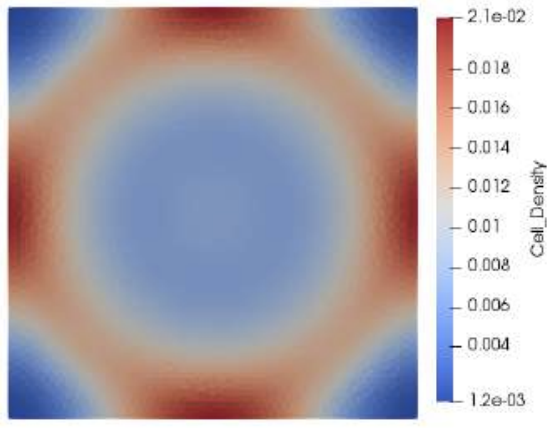} & \includegraphics[width=43mm]{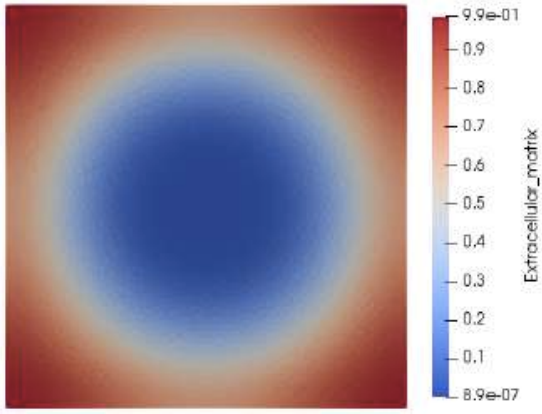} &\includegraphics[width=43mm]{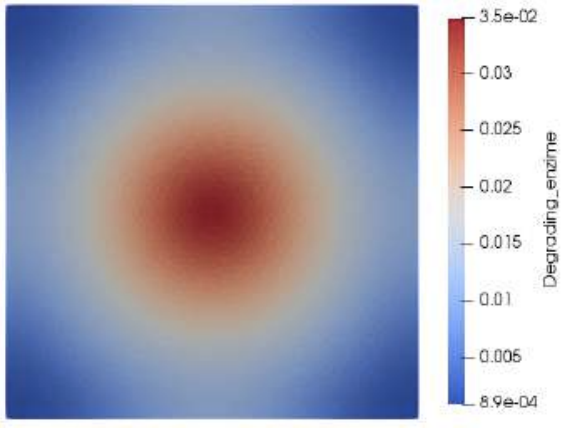}
		\end{tabular}
	\end{flushleft}
	\vspace{-0.5 cm}
\figcaption{Behavior of the scheme \textbf{UVMs} in Test 1 for $\mu_u=0$.} \label{fig:RBM3}
\end{minipage}

\vspace{1 cm}

 In Figures \ref{fig:RBM4} and \ref{fig:RBM5} we show the spatio-temporal evolution of the invasion of the extracellular matrix by the cancer cells with proliferation coefficient $\mu_u=2,$ for the schemes \textbf{UVM$\sigma$} and \textbf{UVMs}, respectively. 
The tumor growth (via proliferation) repopulates the regions where cancer cells were lacking, and it becomes more invasive. Like the case of no proliferation, the numerical simulations for both schemes show a very similar behavior; with the difference of the negative values taken of the cell density computed with the scheme \textbf{UVM$\sigma$}, contrasted with the positivity always evidenced by the scheme \textbf{UVMs} (see Figures \ref{fig:RBM4}, \ref{fig:RBM5} and \ref{fig:MM}); which is in agreement with the theoretical positivity results proved in Subsection \ref{SSPW}. We highlight that the negative values taken for the scheme \textbf{UVM$\sigma$} are very small (of order $10^{-5}$) which do not cause a significant distortion in the discrete variables obtained (for example, no spurious oscillations are evident as a result of these negative values or another strange behaviors).

 \begin{minipage}{\textwidth}
	\begin{flushleft}
		\begin{tabular}{cccc}
		 {\bf Time} & {\bf Cell density} &  {\bf Extracellular matrix} & {\bf Degrading enzyme}  \\[2mm]
		 $t=1$ &		\includegraphics[width=43mm]{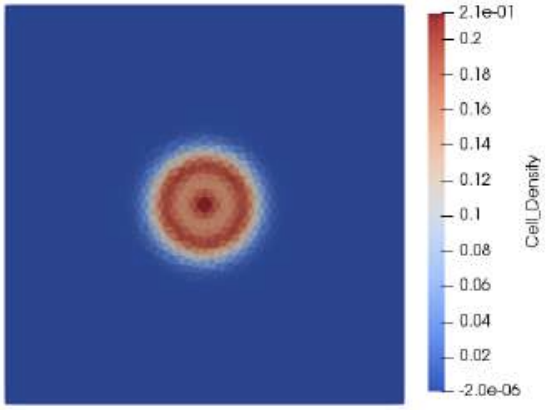} & \includegraphics[width=43mm]{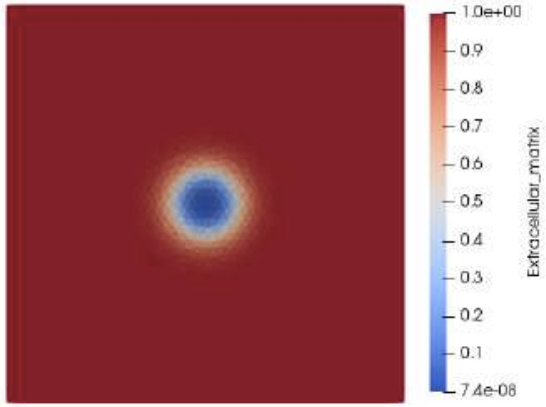} & \includegraphics[width=43mm]{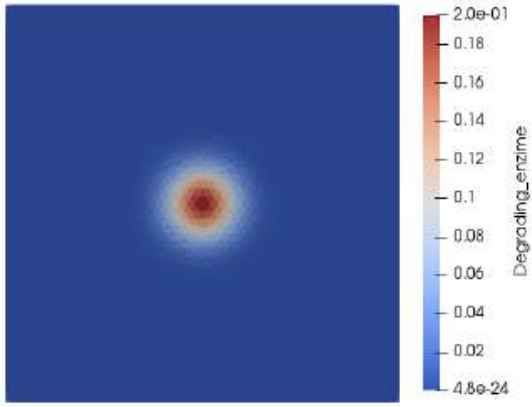}  \\[3mm]
		$t=5$ & \includegraphics[width=43mm]{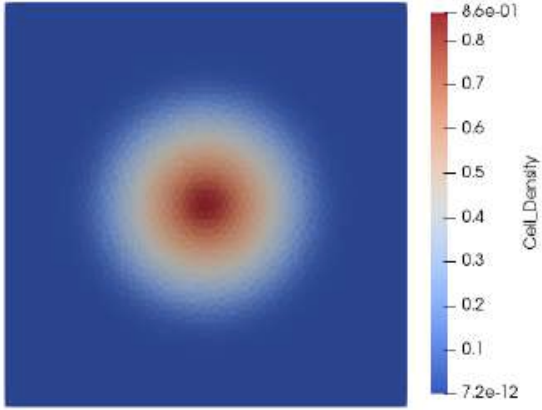} & \includegraphics[width=43mm]{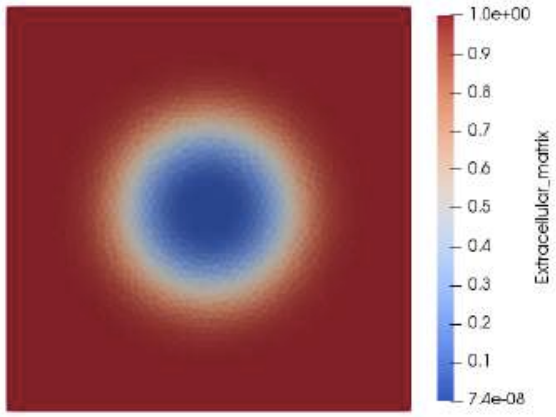} &\includegraphics[width=43mm]{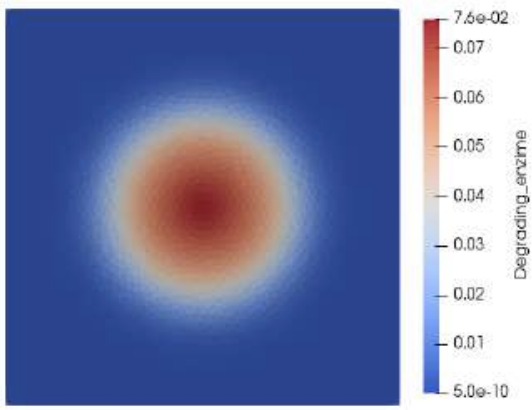} \\[3mm]
		$t=10$ & \includegraphics[width=43mm]{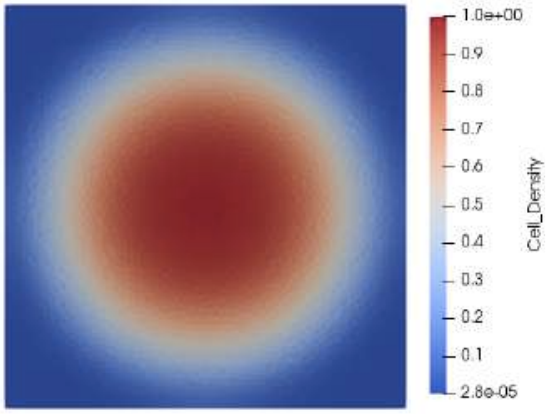} & \includegraphics[width=43mm]{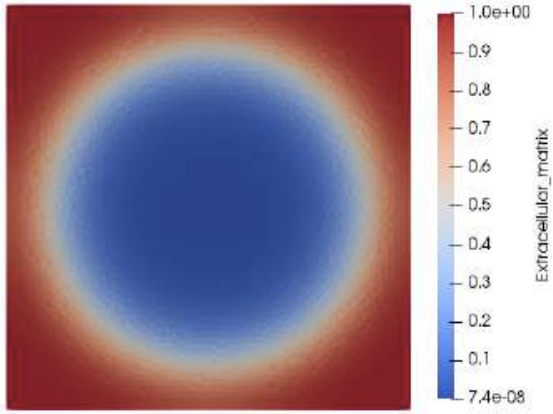} &\includegraphics[width=43mm]{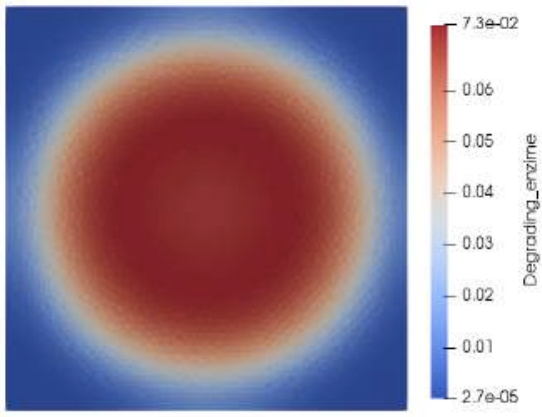} \\[3mm]
		$t=15$ & \includegraphics[width=43mm]{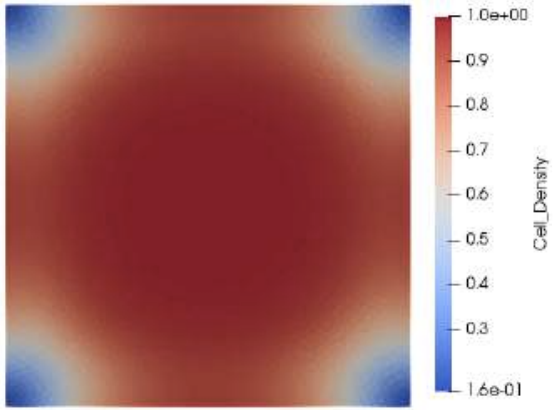} & \includegraphics[width=43mm]{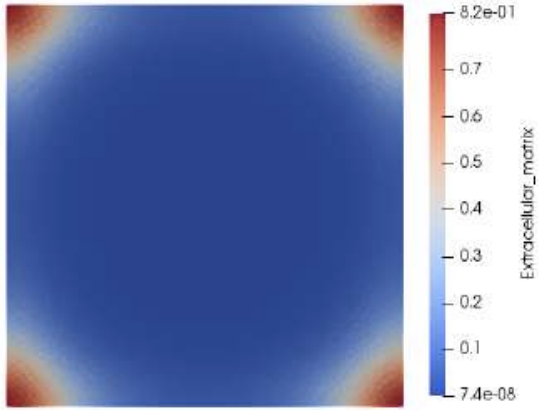} &\includegraphics[width=43mm]{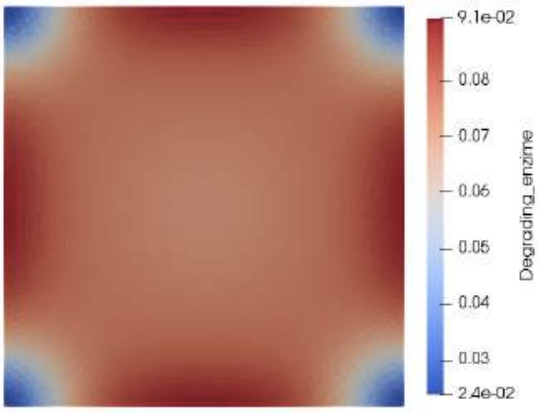} 
		\end{tabular}
	\end{flushleft}
	\vspace{-0.5 cm}
\figcaption{Behavior of the scheme \textbf{UVM$\sigma$} in Test 1 for $\mu_u=2$.} \label{fig:RBM4}
\end{minipage}

\begin{minipage}{\textwidth}
	\begin{flushleft}
		\begin{tabular}{cccc}
		 {\bf Time} & {\bf Cell density} &  {\bf Extracellular matrix} & {\bf Degrading enzyme}  \\[2mm]
		 $t=1$ &		\includegraphics[width=43mm]{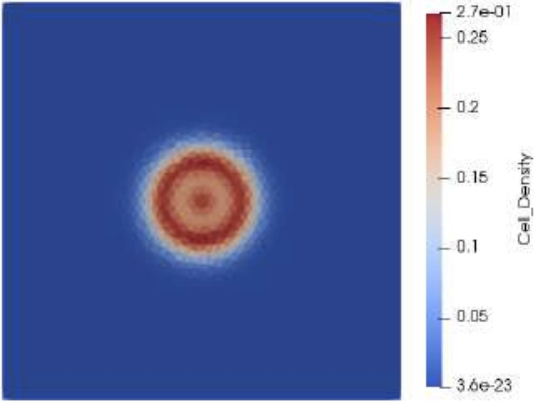} & \includegraphics[width=43mm]{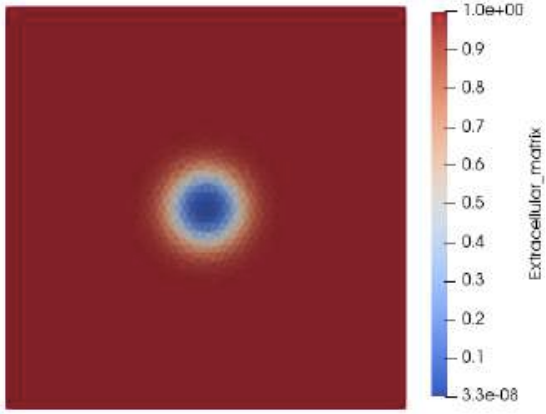} & \includegraphics[width=43mm]{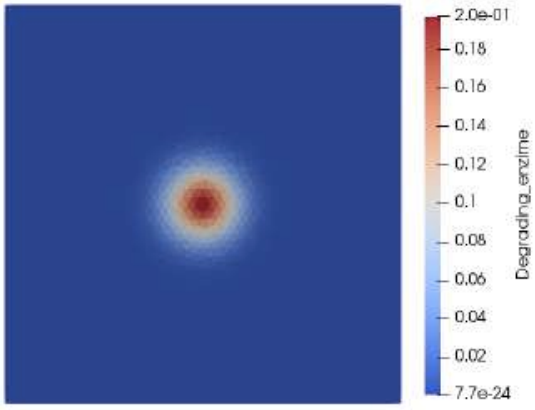}  \\[3mm]
		$t=5$ & \includegraphics[width=43mm]{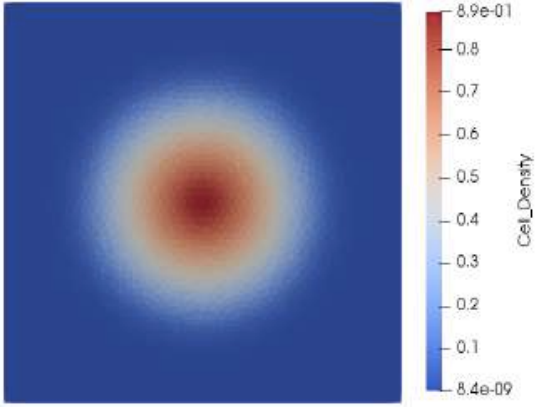} & \includegraphics[width=43mm]{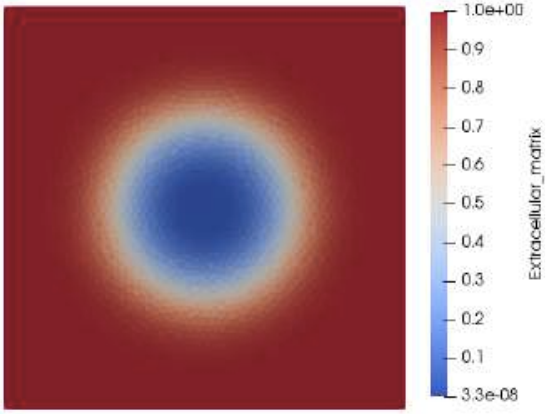} &\includegraphics[width=43mm]{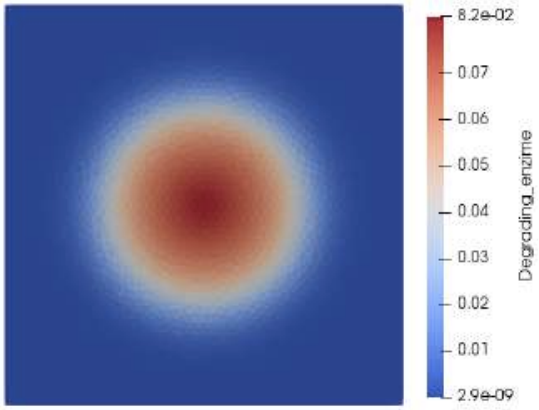} \\[3mm]
		$t=10$ & \includegraphics[width=43mm]{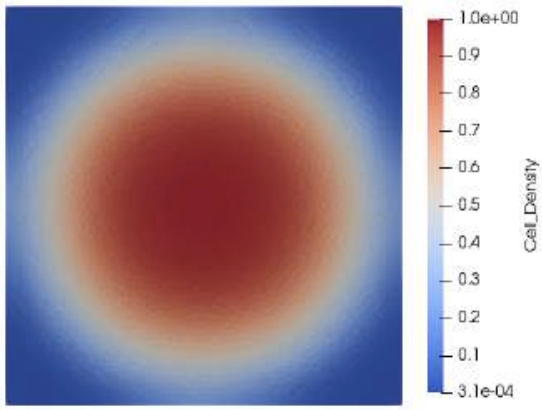} & \includegraphics[width=43mm]{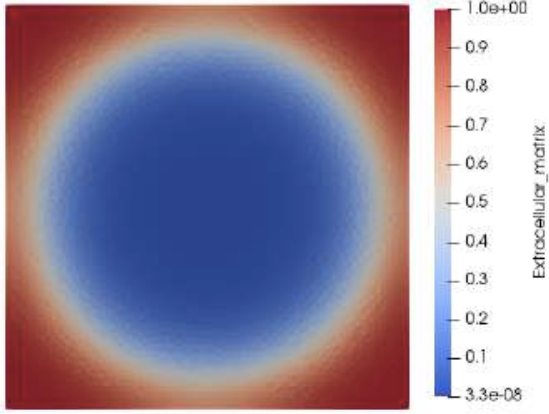} &\includegraphics[width=43mm]{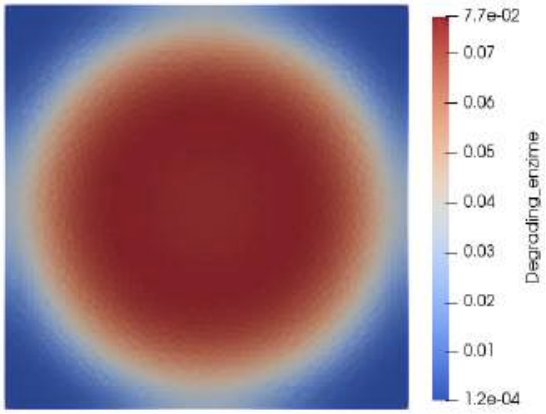} \\[3mm]
		$t=15$ & \includegraphics[width=43mm]{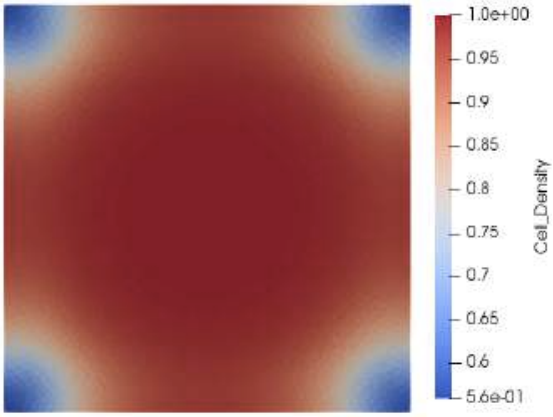} & \includegraphics[width=43mm]{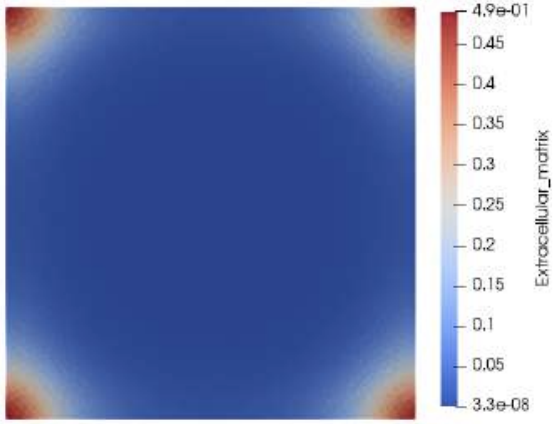} &\includegraphics[width=43mm]{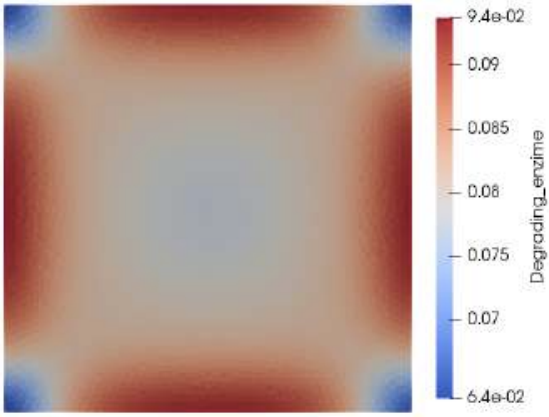}
		\end{tabular}
	\end{flushleft}
	\vspace{-0.5 cm}
\figcaption{Behavior of the scheme \textbf{UVMs} in Test 1 for $\mu_u=2$.} \label{fig:RBM5}
\end{minipage}
\\
\begin{figure}[htbp]
\centering
\subfigure[$\mu_u=0$]{\includegraphics[width=77mm]{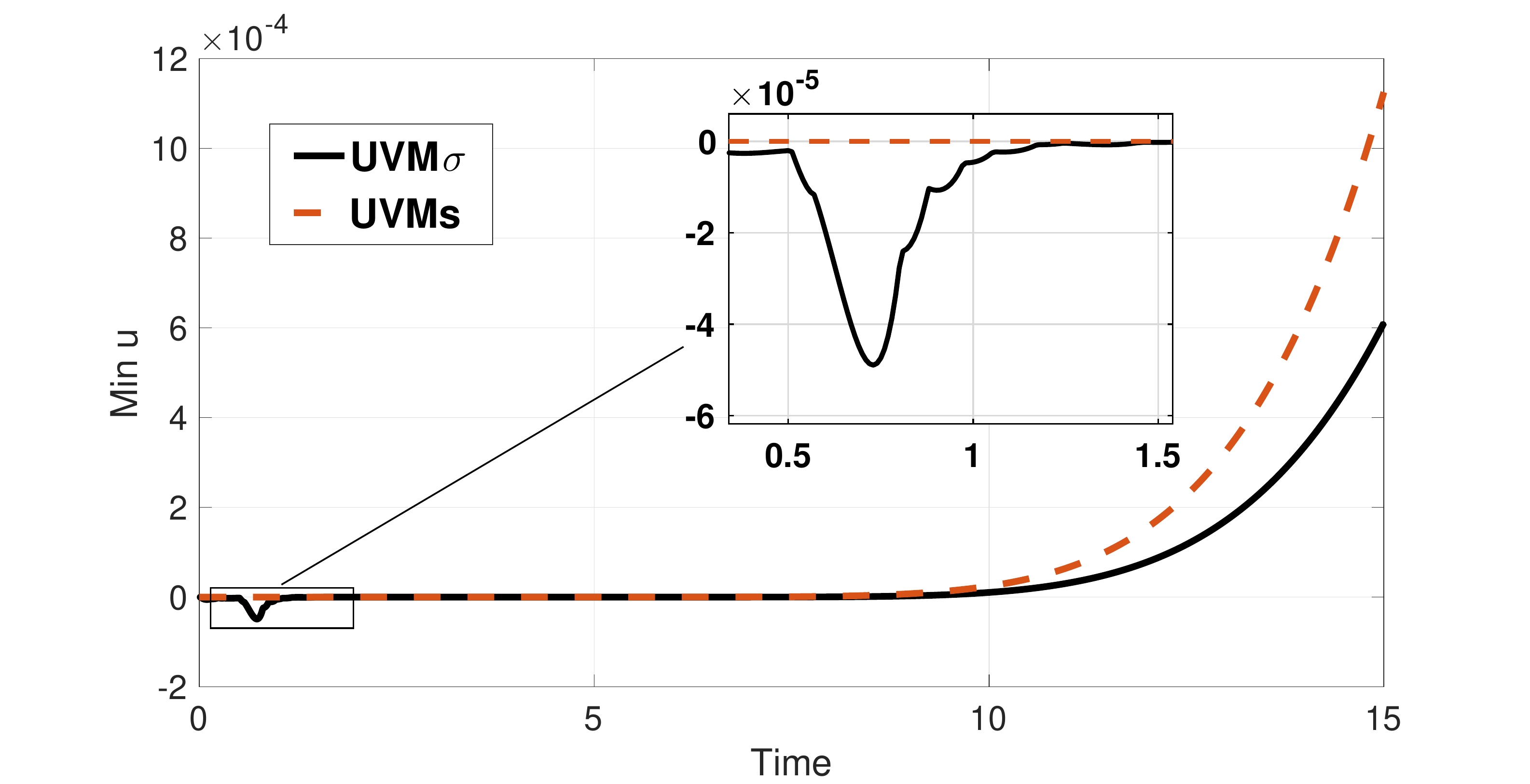}}
\subfigure[$\mu_u=2$]{\includegraphics[width=77mm]{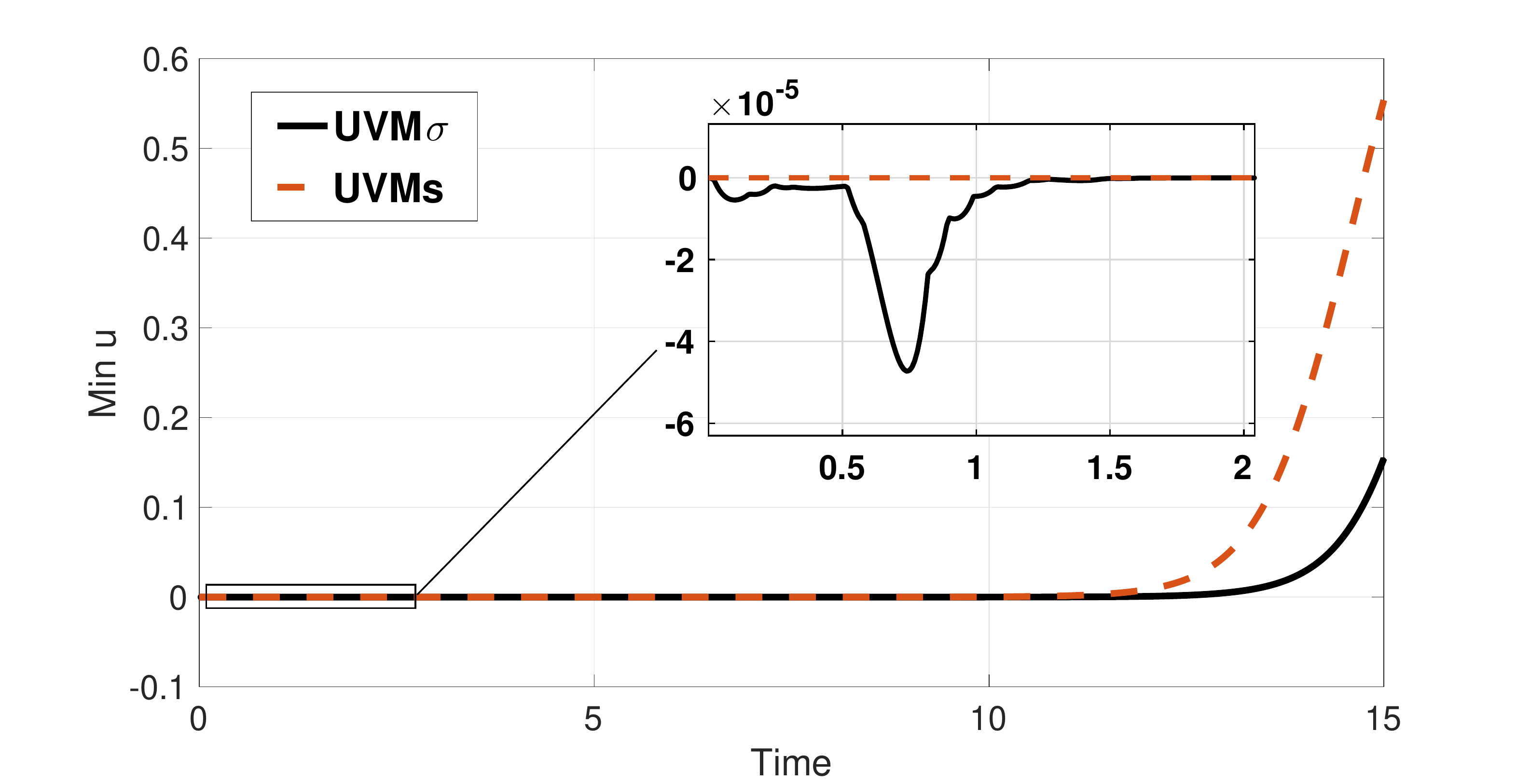}}
\caption{Minimum values of the discrete cell density $u^n_h$ computed with the schemes \textbf{UVM$\sigma$} and \textbf{UVMs} in Test 1.}\label{fig:MM}
\end{figure}

\vspace{0.6 cm}

\underline{Test 2. Heterogeneous extracellular matrix:} In this experiment we show the behavior of the schemes \textbf{UVM$\sigma$} and \textbf{UVMs} in the context of a heterogeneous extracellular matrix (see Figure \ref{fig:RBM6}). We also simulate the absence and presence of cell proliferation; for that, we consider $\mu_u=0$ and $\mu_u=2$ respectively, and the following initial conditions (see Figure \ref{fig:RBM6}):
$$
u_0= \text{exp} (-400(x-0.5)^2-400(y-0.5)^2), \ \ m_0 =0.5u_0 ,
$$
$$
v_0 = 1-\sum_{i=1}^7 \text{exp} (-b_i(x-x_i)^2-c_i(y-y_i)^2),
$$
where $b_1=b_2=800$, $b_3=b_4=b_5=600$, $b_6=400$, $b_7=100$, $c_1=c_2=100$, $c_3=c_4=c_5=200$, $c_6=300$, $c_7=50$, $x_1=y_1=y_5=0.2$, $x_2=x_6=y_3=0.5$, $y_2=0.1$, $x_3=0.3$, $x_4=0.6$, $y_4=y_7=0.7$, $x_5=x_7=0.8$ and $y_6=0.9$.\\ 
 
\begin{minipage}{\textwidth}
	\begin{flushleft}
		\begin{tabular}{ccc}
		  
		 \includegraphics[width=43mm]{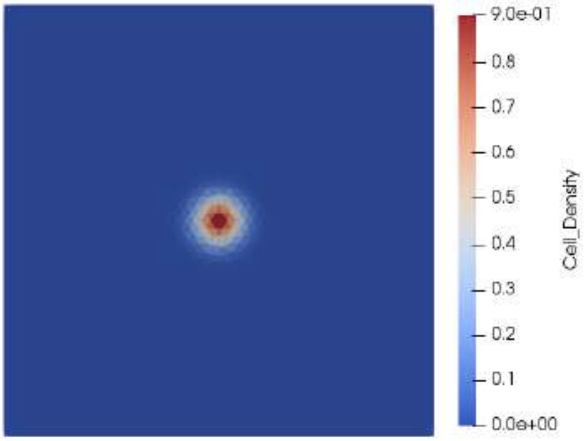} & \hspace{-2mm} \includegraphics[width=43mm]{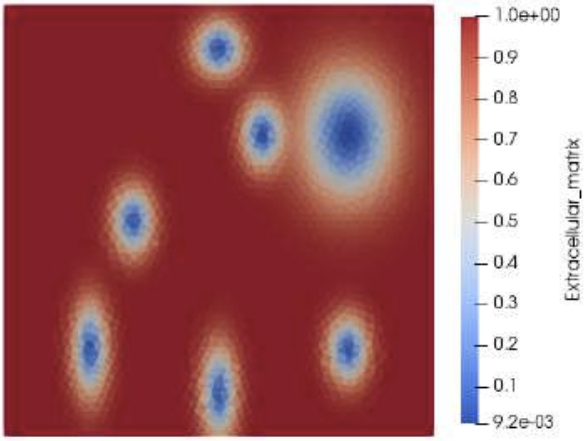} & \hspace{-2mm} \includegraphics[width=43mm]{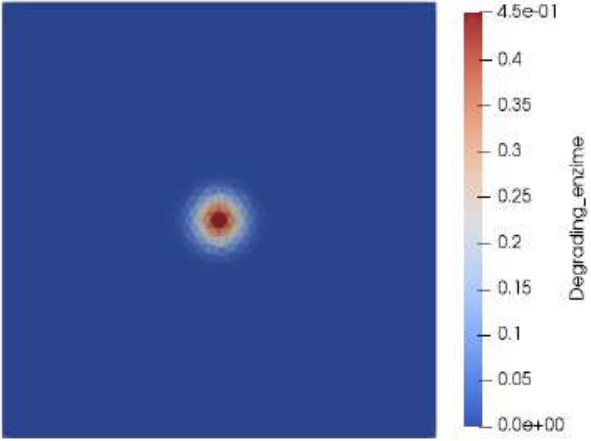}\\[2mm]
		(a) {\bf Cell density} & (b) {\bf Extracellular matrix} & (c) {\bf Degrading enzyme}  
		\end{tabular}
	\end{flushleft}
	\vspace{-0.5 cm}
\figcaption{Initial conditions in Test 2} \label{fig:RBM6}
\end{minipage}

\vspace{0.7 cm}

The evolution results for the case  $\mu_u=0$ are showed in Figures \ref{fig:RBM7} and \ref{fig:RBM8} for the schemes \textbf{UVM$\sigma$} and \textbf{UVMs}, respectively. The behavior of the cell density also reproduces the pattern reported in \cite{Anderson} in the context of heterogeneous extracellular matrix. A deterioration of the matrix is observed; sets of cancer cells emerge from the tumor body in its beginning, invading the extracellular matrix leading to possible metastasis.  The effect of the heterogeneous matrix on the cancer cell dynamics can be seen as the cancer cells approach their steady distribution. Again, the behavior of evolution of the unknowns in both schemes is similar, except in terms of the positivity of $u^n_h$ (see Figures  \ref{fig:MM2}, \ref{fig:RBM7} and \ref{fig:RBM8}). On the other hand, Figures \ref{fig:RBM9} and \ref{fig:RBM10} show the complementary effect of cell proliferation.

\begin{figure}[htbp]
\centering
\subfigure[$\mu_u=0$]{\includegraphics[width=77mm]{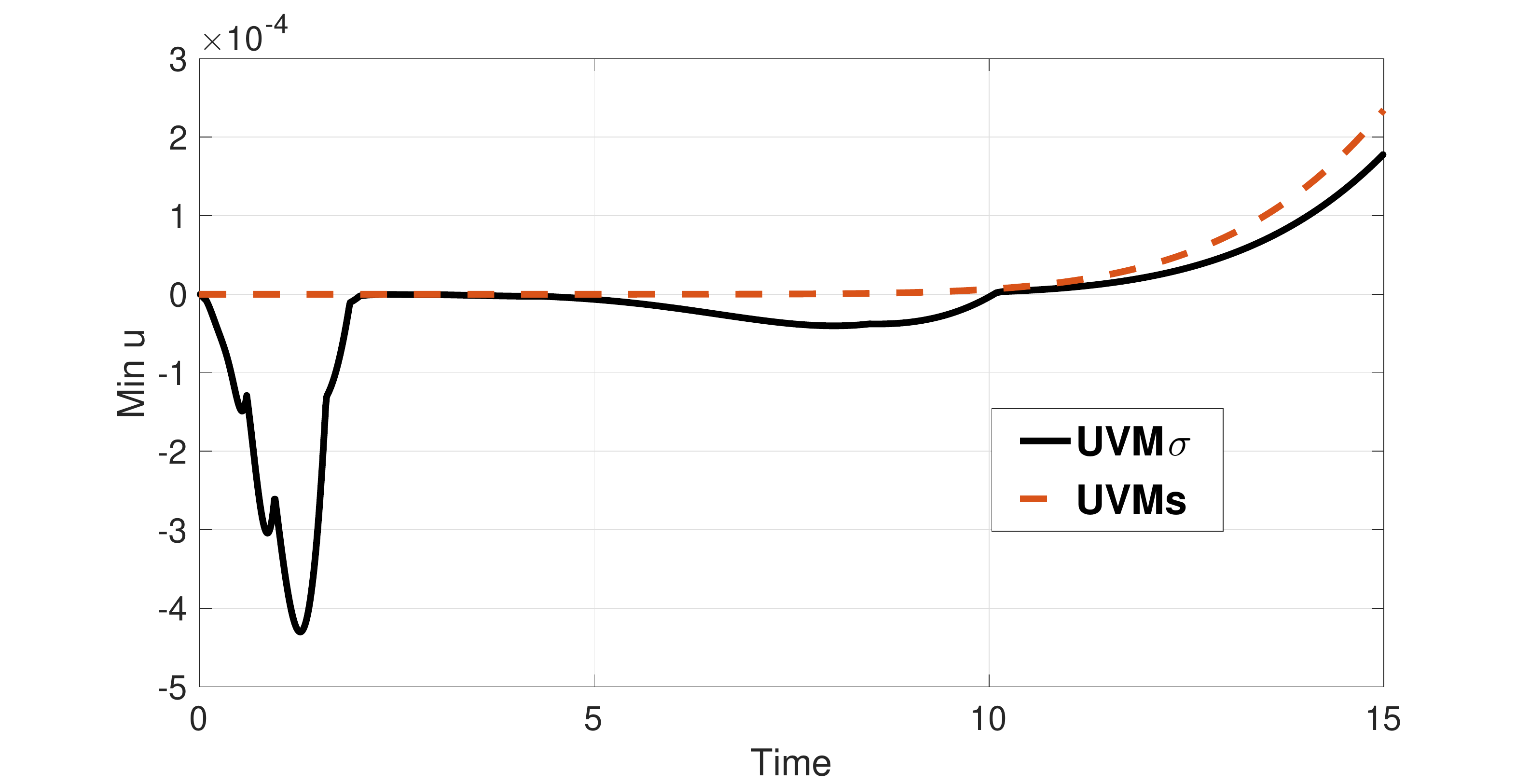}}
\subfigure[$\mu_u=2$]{\includegraphics[width=77mm]{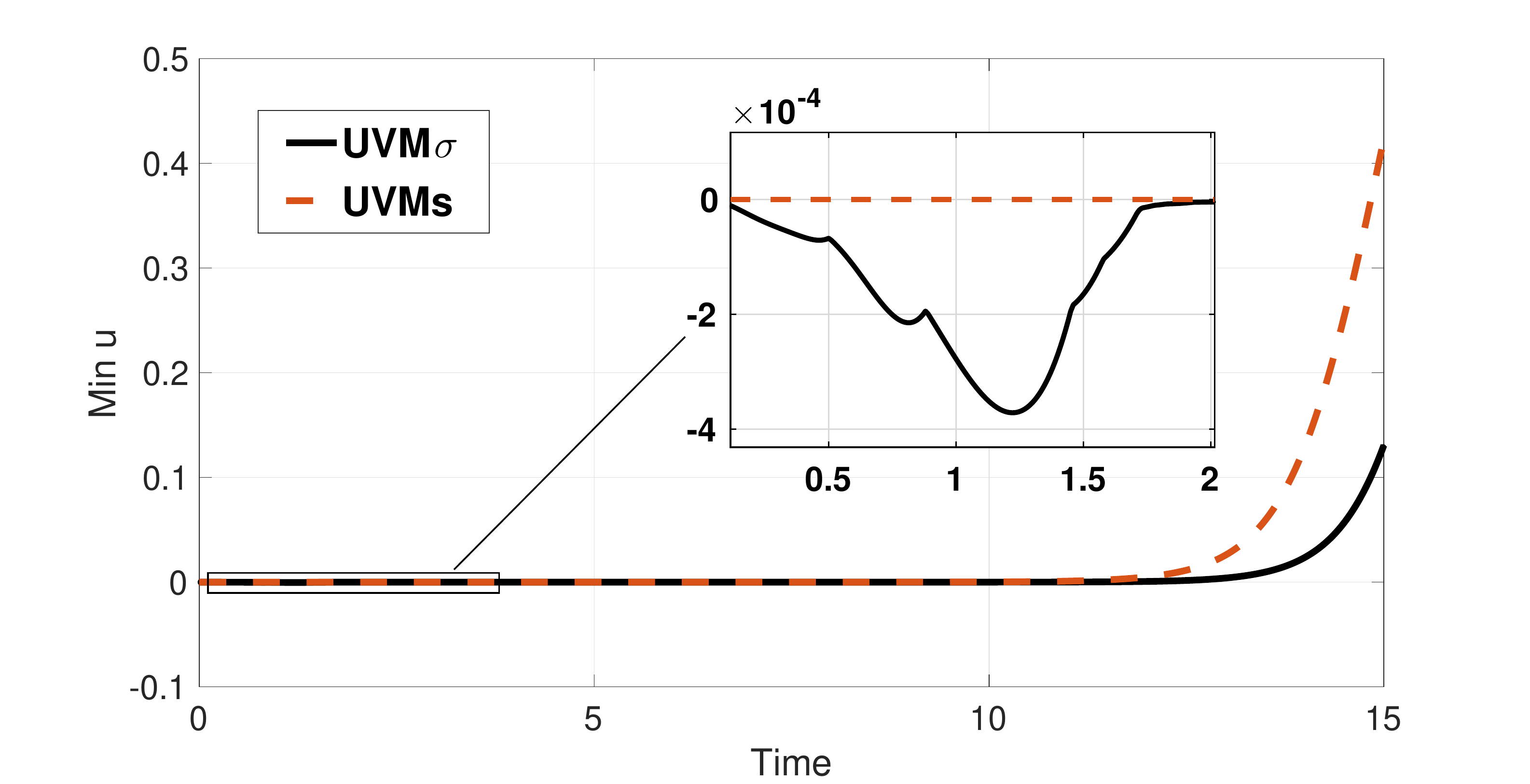}}
\caption{Minimum values of the discrete cell density $u^n_h$ computed with the schemes \textbf{UVM$\sigma$} and \textbf{UVMs} in Test 2.}\label{fig:MM2}
\end{figure}

\begin{minipage}{\textwidth}
	\begin{flushleft}
		\begin{tabular}{cccc}
		 {\bf Time} & {\bf Cell density} &  {\bf Extracellular matrix} & {\bf Degrading enzyme}  \\[2mm]
		 $t=1$ &		\includegraphics[width=43mm]{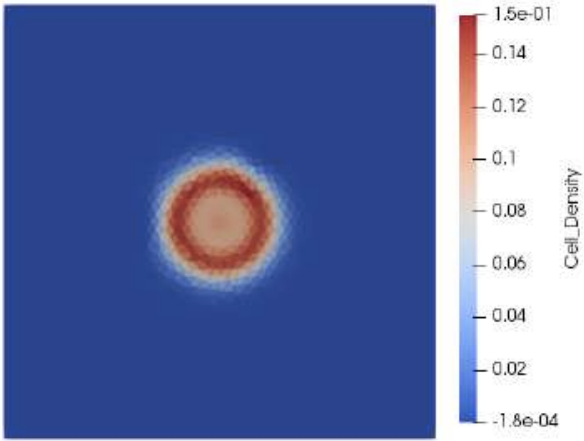} & \includegraphics[width=43mm]{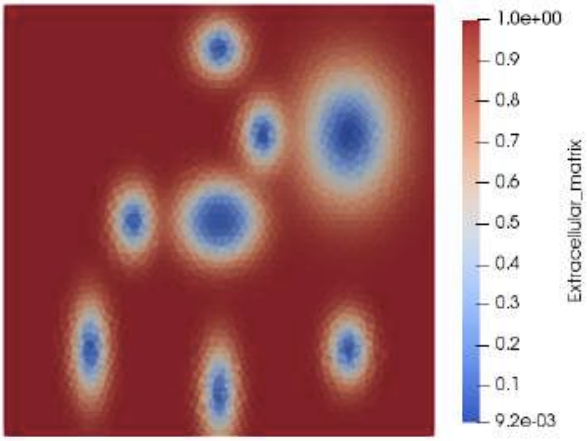} & \includegraphics[width=43mm]{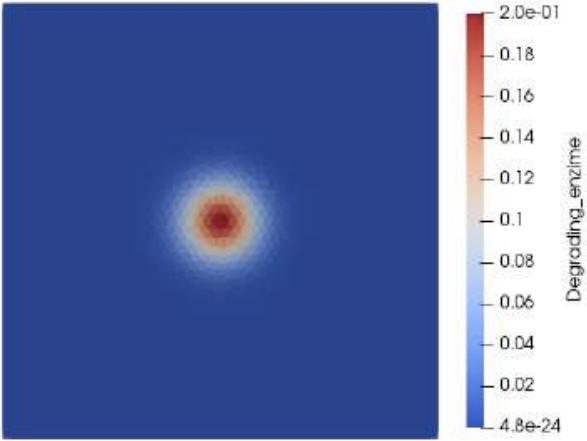}  \\[3mm]
		$t=5$ & \includegraphics[width=43mm]{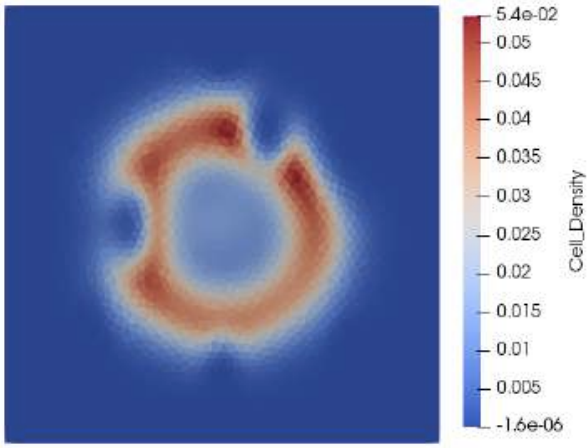} & \includegraphics[width=43mm]{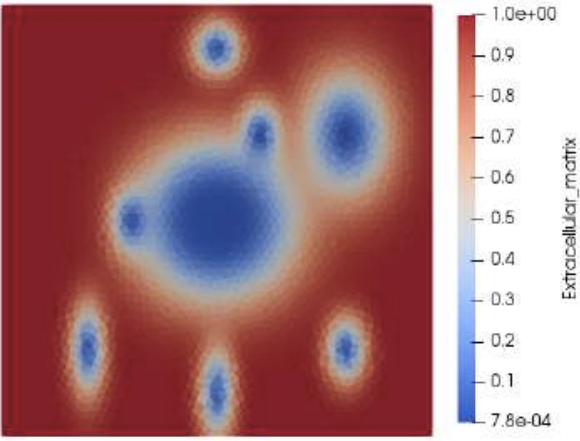} &\includegraphics[width=43mm]{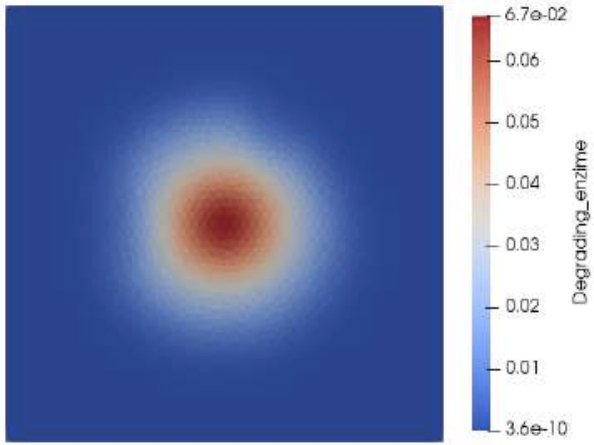} \\[3mm]
		$t=10$ & \includegraphics[width=43mm]{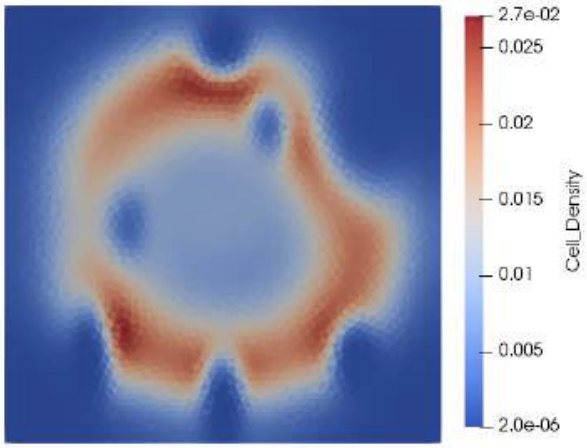} & \includegraphics[width=43mm]{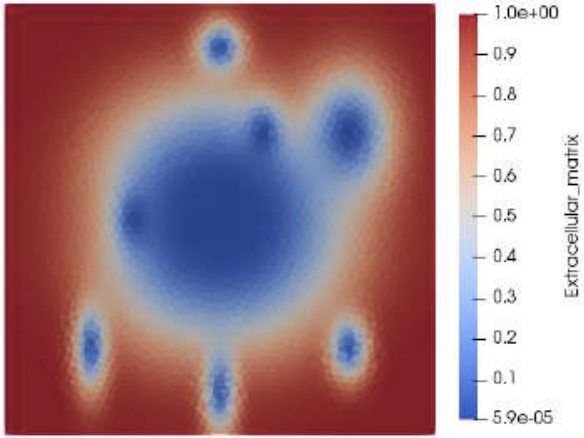} &\includegraphics[width=43mm]{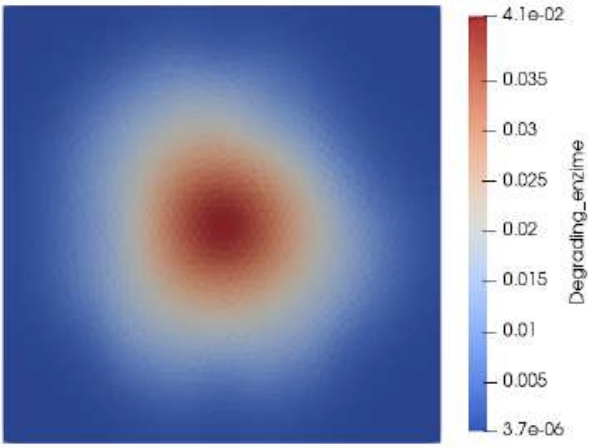} \\[3mm]
		$t=15$ & \includegraphics[width=43mm]{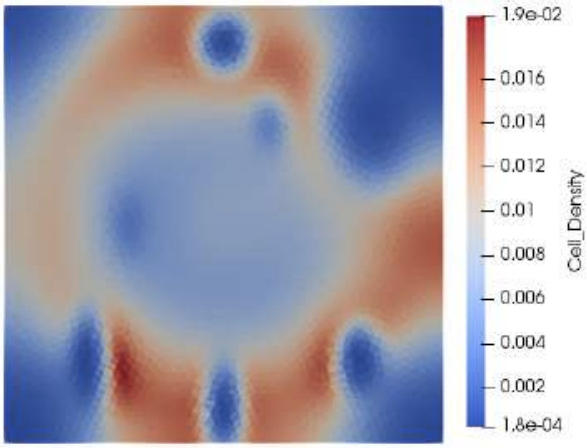} & \includegraphics[width=43mm]{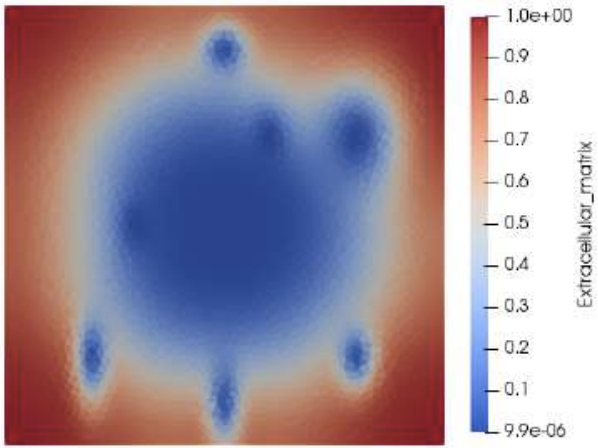} &\includegraphics[width=43mm]{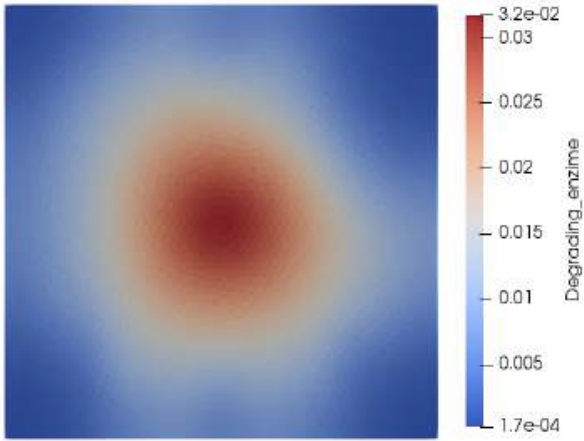} 
		\end{tabular}
	\end{flushleft}
	\vspace{-0.5 cm}
\figcaption{Behavior of the scheme \textbf{UVM$\sigma$} in Test 2 for $\mu_u=0$.} \label{fig:RBM7}
\end{minipage}

\begin{minipage}{\textwidth}
	\begin{flushleft}
		\begin{tabular}{cccc}
		 {\bf Time} & {\bf Cell density} &  {\bf Extracellular matrix} & {\bf Degrading enzyme}  \\[2mm]
		 $t=1$ &		\includegraphics[width=43mm]{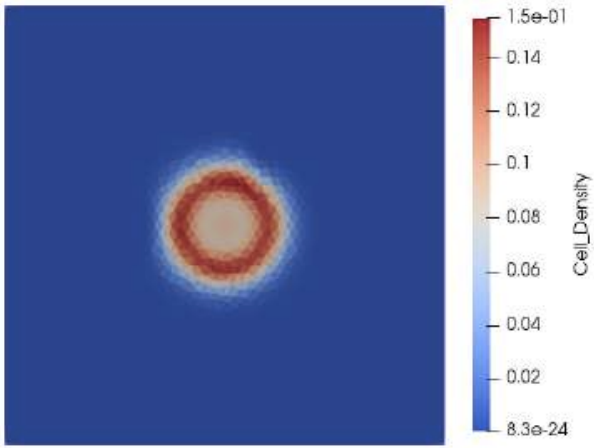} & \includegraphics[width=43mm]{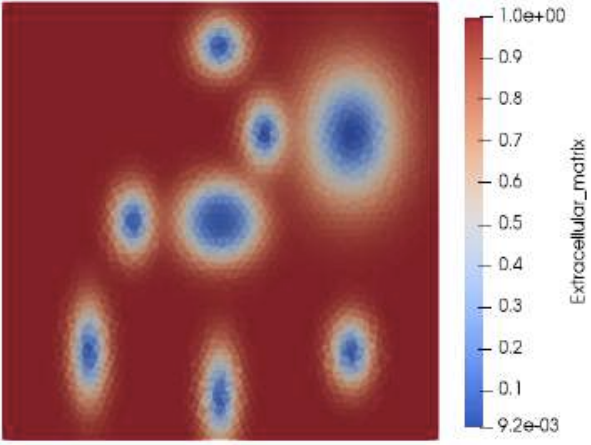} & \includegraphics[width=43mm]{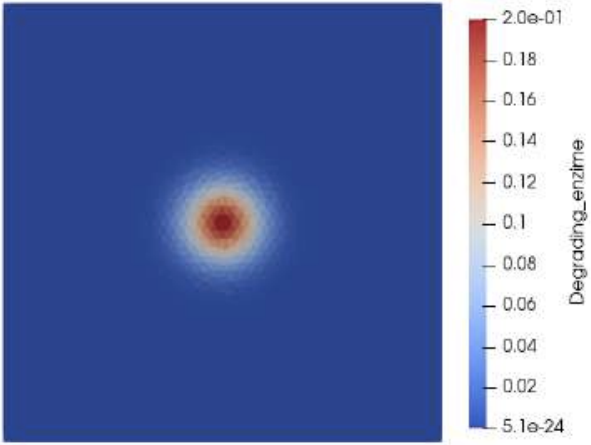}  \\[3mm]
		$t=5$ & \includegraphics[width=43mm]{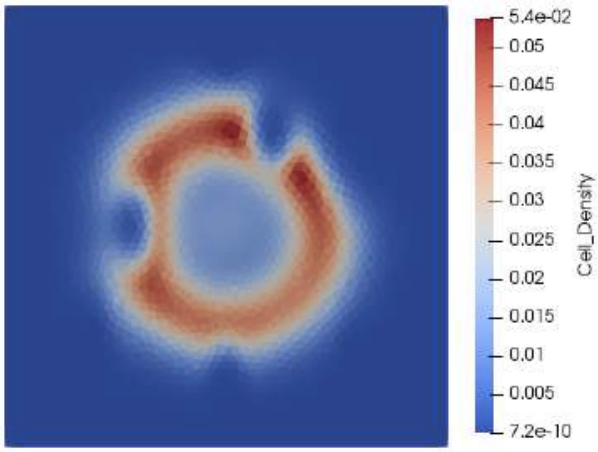} & \includegraphics[width=43mm]{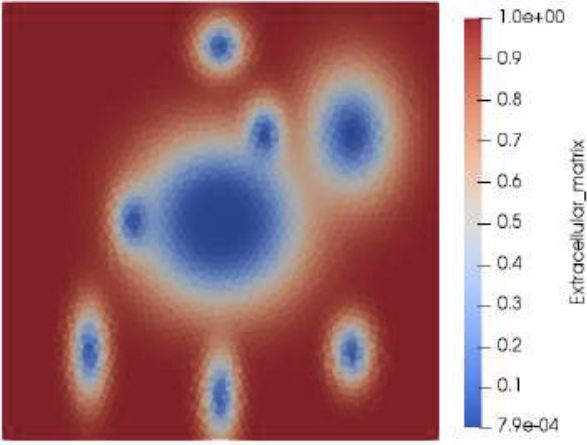} &\includegraphics[width=43mm]{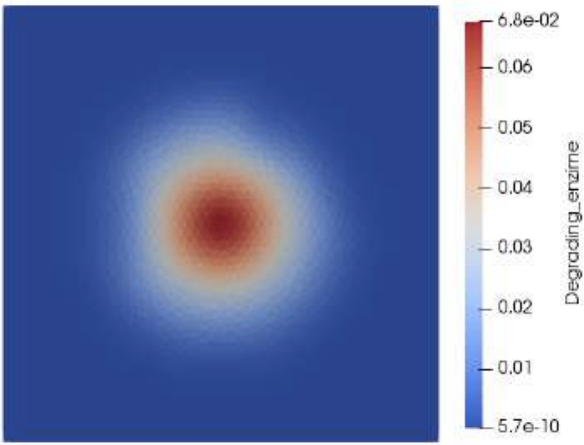} \\[3mm]
		$t=10$ & \includegraphics[width=43mm]{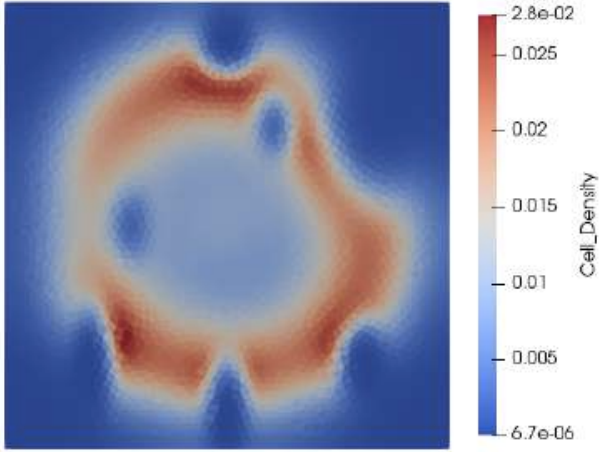} & \includegraphics[width=43mm]{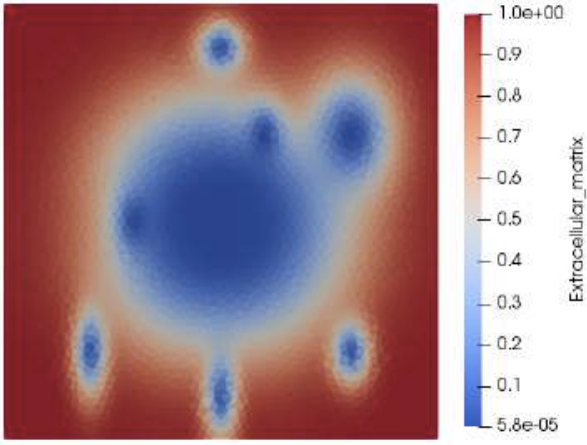} &\includegraphics[width=43mm]{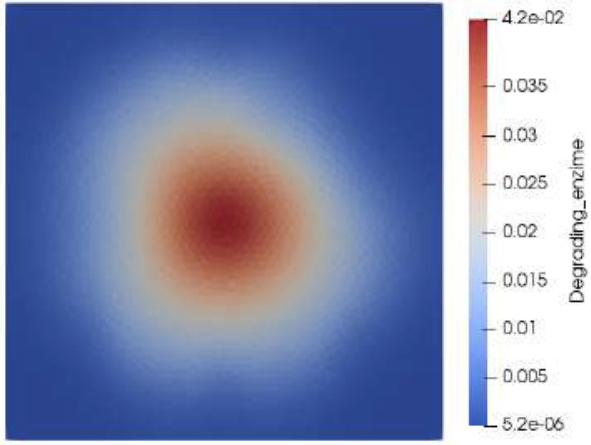} \\[3mm]
			$t=15$ & \includegraphics[width=43mm]{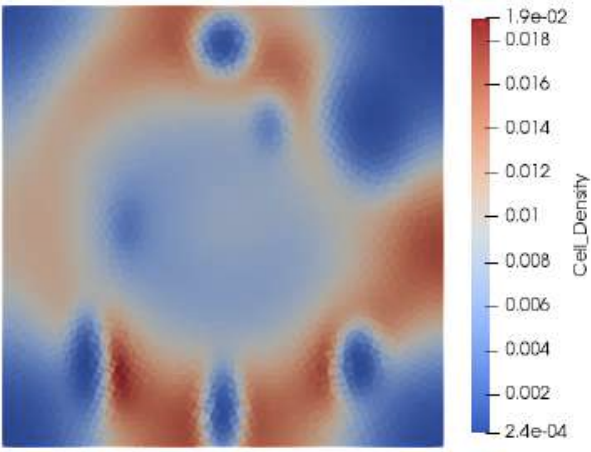} & \includegraphics[width=43mm]{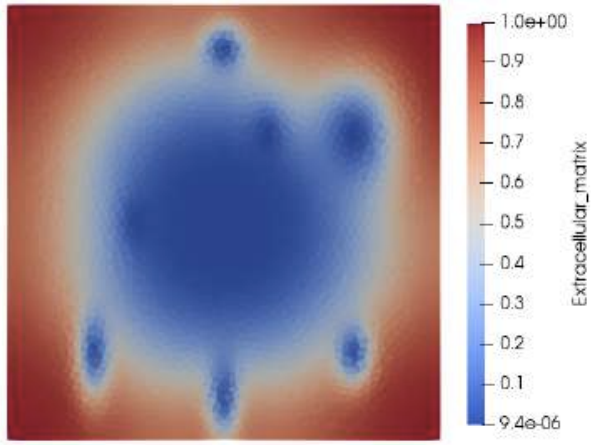} &\includegraphics[width=43mm]{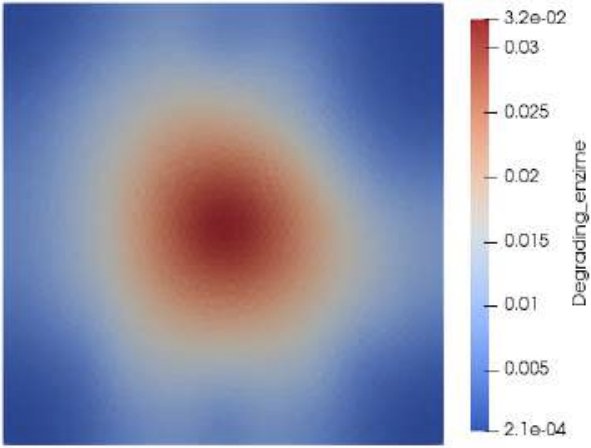}
		\end{tabular}
	\end{flushleft}
	\vspace{-0.5cm}
\figcaption{Behavior of the scheme \textbf{UVMs} in Test 2 for $\mu_u=0$.} \label{fig:RBM8}
\end{minipage}

\begin{minipage}{\textwidth}
	\begin{flushleft}
		\begin{tabular}{cccc}
		 {\bf Time} & {\bf Cell density} &  {\bf Extracellular matrix} & {\bf Degrading enzyme}  \\[2mm]
		 $t=1$ &		\includegraphics[width=43mm]{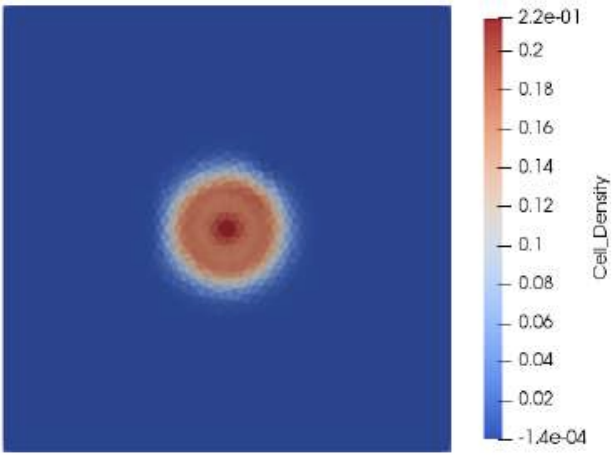} & \includegraphics[width=43mm]{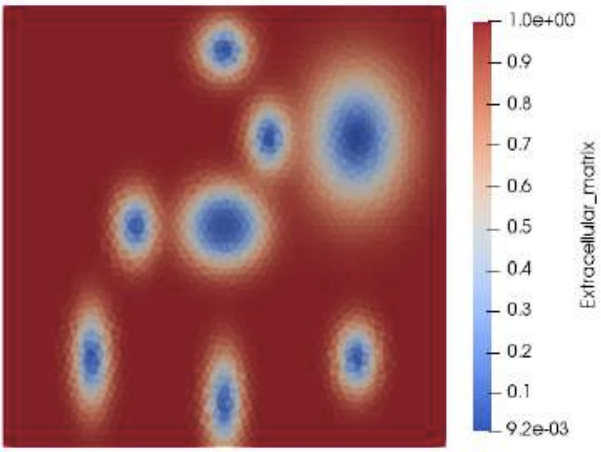} & \includegraphics[width=43mm]{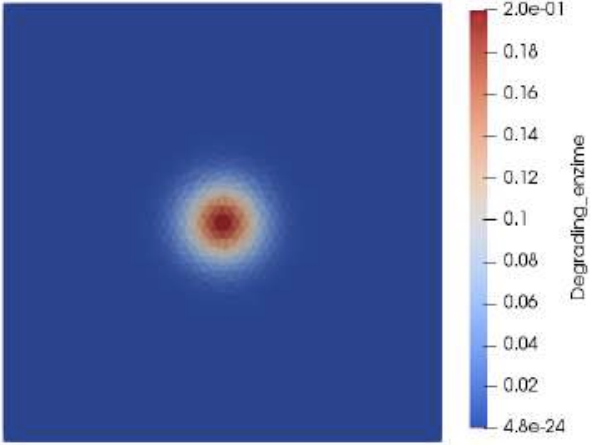}  \\[3mm]
		$t=5$ & \includegraphics[width=43mm]{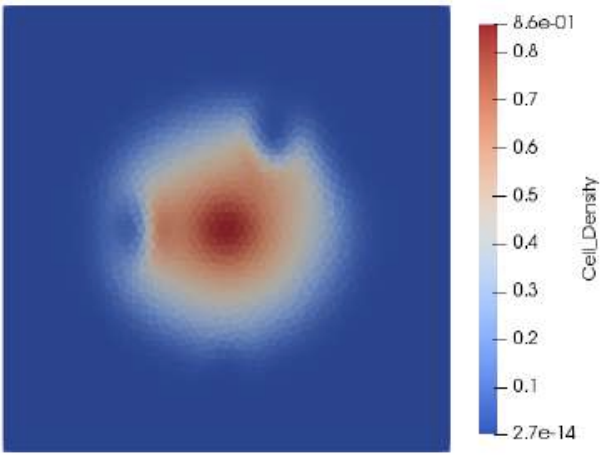} & \includegraphics[width=43mm]{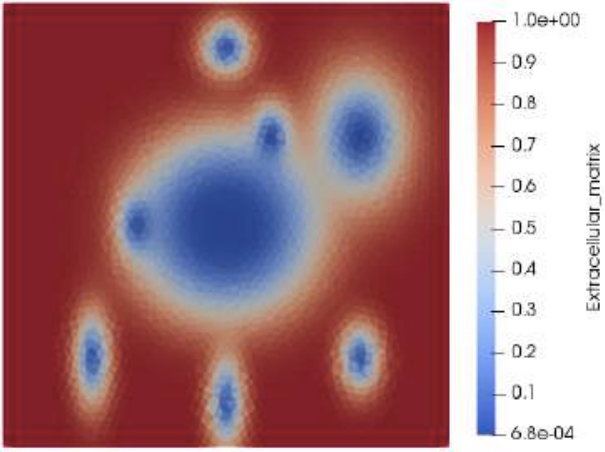} &\includegraphics[width=43mm]{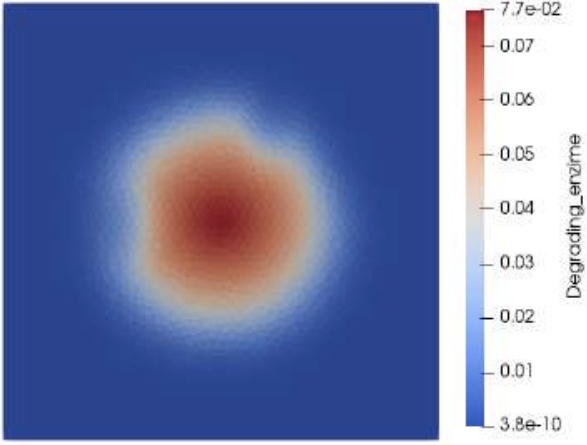} \\[3mm]
		$t=10$ & \includegraphics[width=43mm]{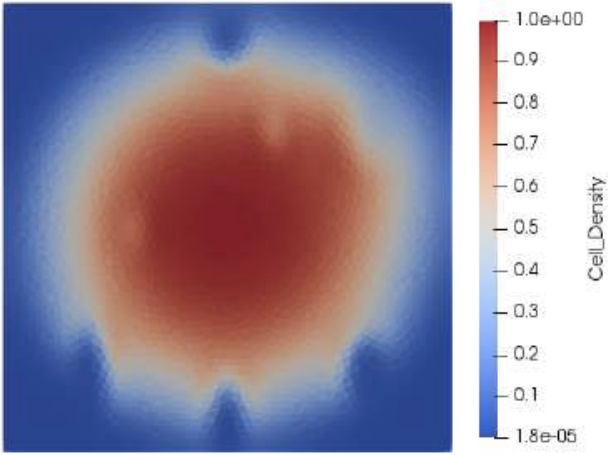} & \includegraphics[width=43mm]{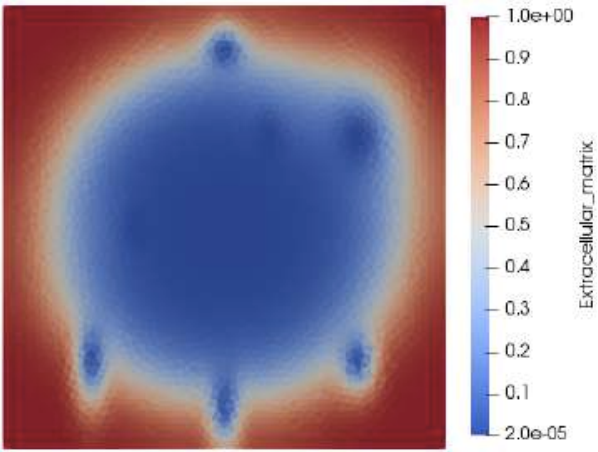} &\includegraphics[width=43mm]{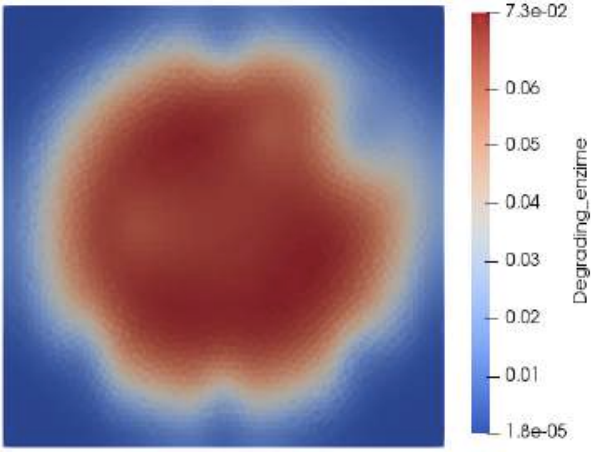} \\[3mm]
		$t=15$ & \includegraphics[width=43mm]{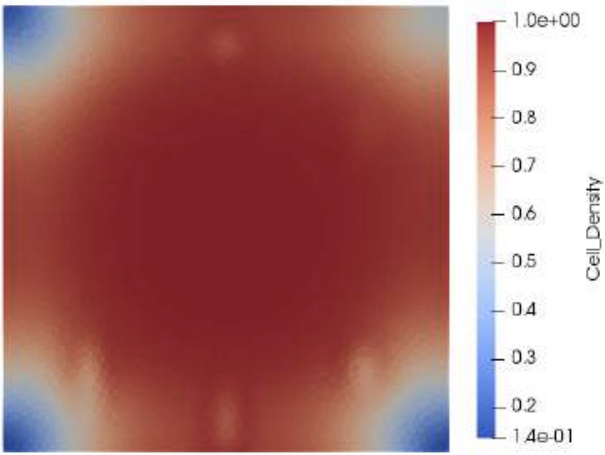} & \includegraphics[width=43mm]{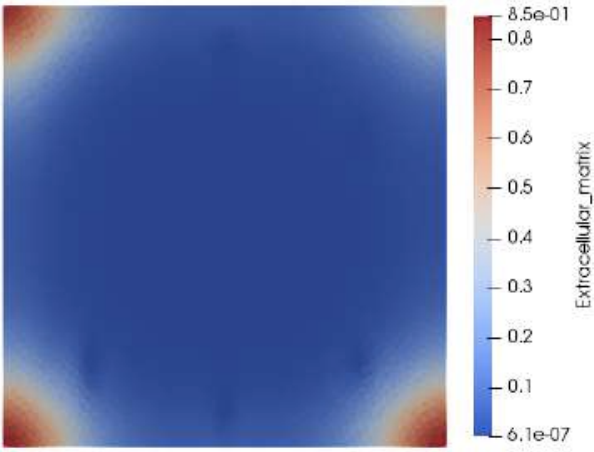} &\includegraphics[width=43mm]{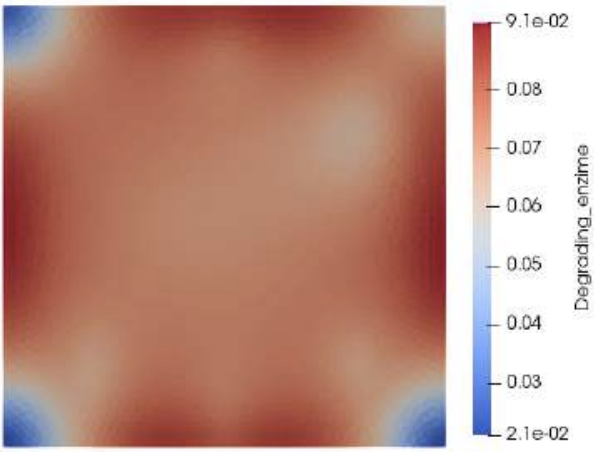} 
		\end{tabular}
	\end{flushleft}
	\vspace{-0.5 cm}
\figcaption{Behavior of the scheme \textbf{UVM$\sigma$} in Test 2 for $\mu_u=2$.} \label{fig:RBM9}
\end{minipage}

\begin{minipage}{\textwidth}
	\begin{flushleft}
		\begin{tabular}{cccc}
		 {\bf Time} & {\bf Cell density} &  {\bf Extracellular matrix} & {\bf Degrading enzyme}  \\[2mm]
		 $t=1$ &		\includegraphics[width=43mm]{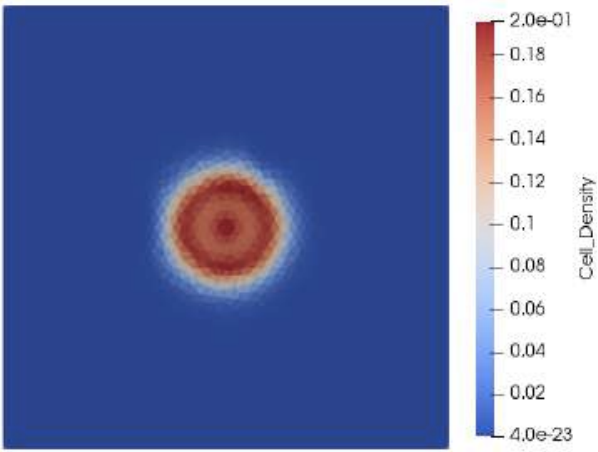} & \includegraphics[width=43mm]{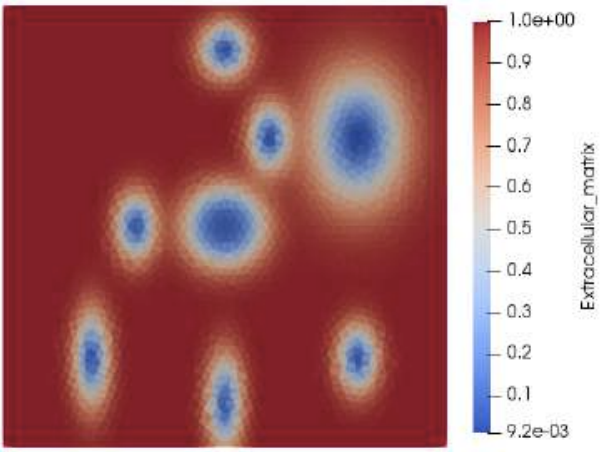} & \includegraphics[width=43mm]{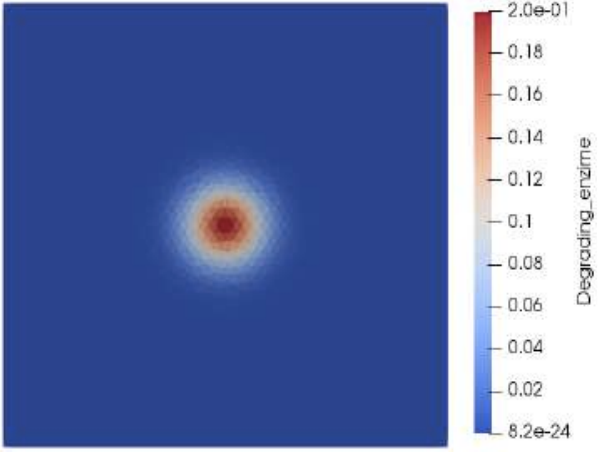}  \\[3mm]
		$t=5$ & \includegraphics[width=43mm]{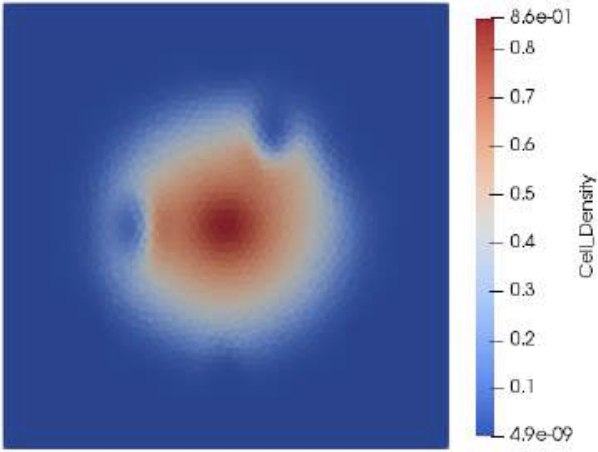} & \includegraphics[width=43mm]{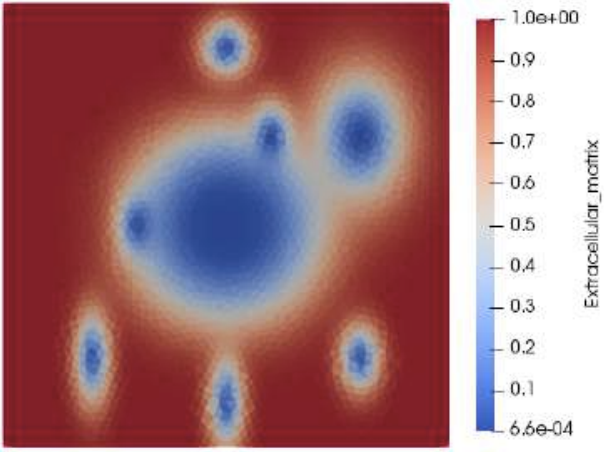} &\includegraphics[width=43mm]{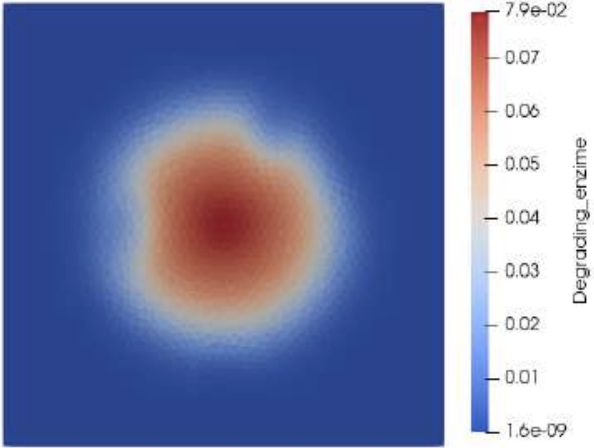} \\[3mm]
		$t=10$ & \includegraphics[width=43mm]{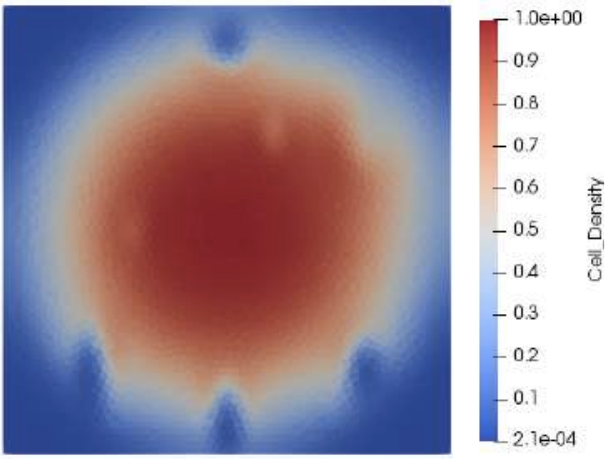} & \includegraphics[width=43mm]{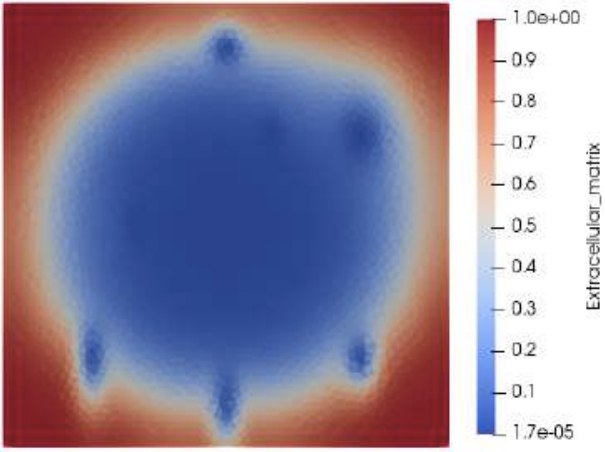} &\includegraphics[width=43mm]{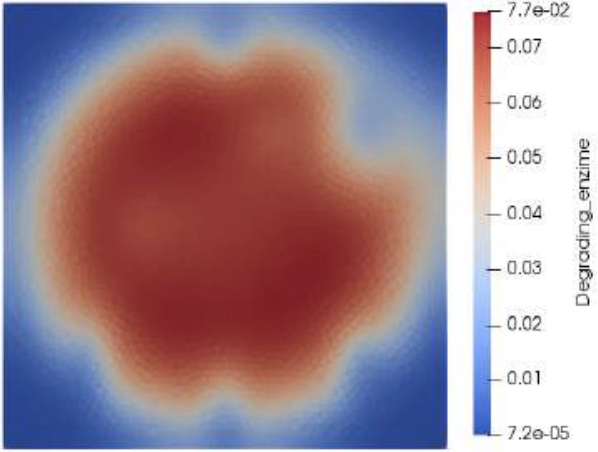} \\[3mm]
		$t=15$ & \includegraphics[width=43mm]{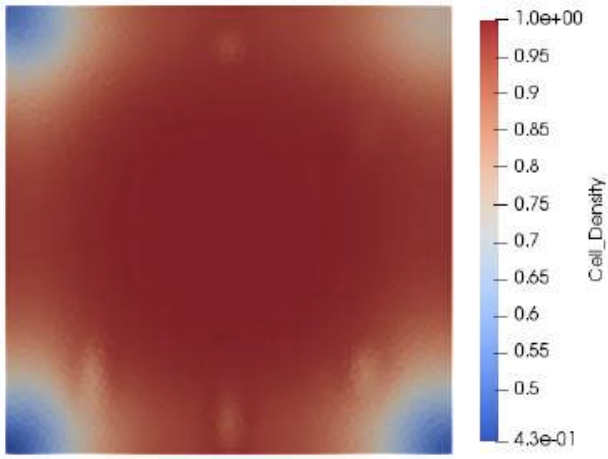} & \includegraphics[width=43mm]{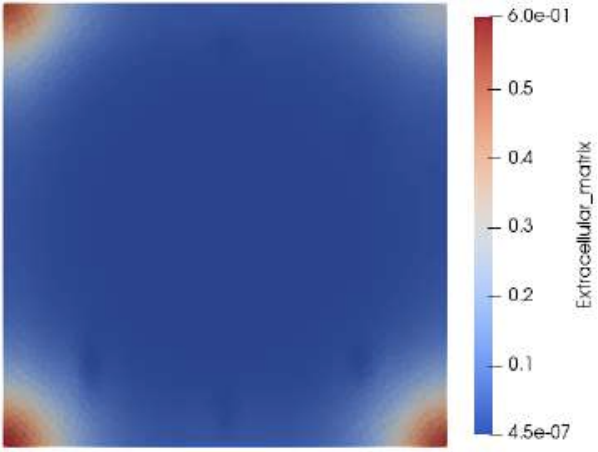} &\includegraphics[width=43mm]{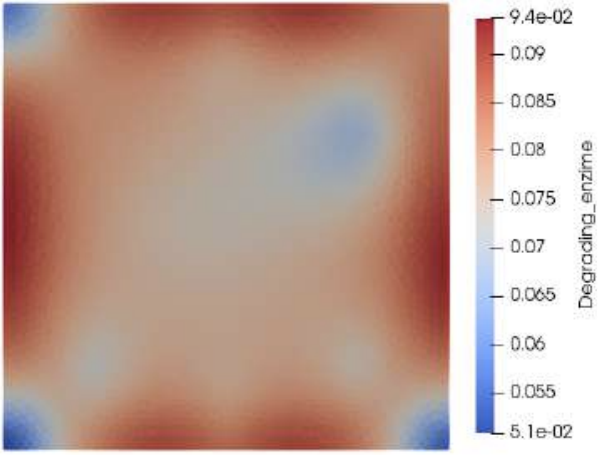}
		\end{tabular}
	\end{flushleft}
	\vspace{-0.5 cm}
\figcaption{Behavior of the scheme \textbf{UVMs} in Test 2 for $\mu_u=2$.} \label{fig:RBM10}
\end{minipage}

\bigskip


\section*{Acknowledgements}
The authors have been supported by Vicerrector\'ia de Investigaci\'on y Extensi\'on of Universidad Industrial de Santander, Capital Semilla project, code 2491.


\begin{thebibliography}{99}

\bibitem{Anderson} A. R. A. Anderson, M. A. J. Chaplain, E. L. Newman, R. J. C. Steele, A. M. Thompson, {\it Mathematical modelling of tumour invasion and metastasis},
  {\it J. Theoret. Medicine,}  \textbf{2}, (2000) 129-254.
   
\bibitem{PB} R. Becker, X. Feng and A. Prohl, Finite element approximations of the Ericksen-Leslie model for nematic liquid crystal flow. {\it SIAM J. Numer. Anal.} \textbf{46} (2008), 1704--1731. 
 
\bibitem{Bellomo} N. Bellomo, N. K. Li, P. K. Maini, On the foundations of cancer modelling: selected topics, speculations, and perspectives, {\it Math. Models Methods Appl.}
Sci. \textbf{18}, (2008) 593-646.

\bibitem{Chaplain} M. A. J. Chaplain, A.R.A. Anderson, Mathematical modelling of tissue invasion, in: L. Preziosi (Ed.), Cancer Modelling and Simulation, Chapman $\&$ Hall/CRT,
2003, 267-297.

\bibitem{Chaplain1} M. A. J. Chaplain, G. Lolas, Mathematical modelling of cancer invasion of tissue: dynamic heterogeneity, {\it Netw. Heterog. Media,} \textbf{1}, (2006) 399-439.

\bibitem{Chaplain2}M.A.J. Chaplain, M. Lachowicz, Z. Szyma\'nska, D. Wrzosek, Dariusz,
Mathematical modelling of cancer invasion: the importance of cell-cell adhesion and cell-matrix adhesion,
{\it Math. Models Methods Appl. Sci.} \textbf{21}, (2011) 719-743.

\bibitem{Corrias} L. Corrias, B. Perthame and H. Zaag, Global solutions of some chemotaxis and angiogenesis systems in high space dimensions, {\it Milan J. Math.,} \textbf{72}, (2004) 1-28.


\bibitem{Abel} A. Duarte-Rodr\'iguez, M. A. Rodr\'iguez-Bellido, D. A. Rueda-G\'omez and E. J. Villamizar-Roa, {\it Numerical analysis for a chemotaxis-Navier-Stokes system},  accepted to publication in ESAIM: Mathematical Modelling and Numerical Analysis (2020).

\bibitem{Feireisl} E. Feireisl, A. Novotny, Singular limits in thermodynamics of viscous fluids. Advances in Mathematical Fluid Mechanics. Birkh\"auser Verlag, 315 Basel, 2009.

\bibitem{Gatenby} R. A. Gatenby, E. T. Gawlinski, A reaction-diffusion model of cancer invasion, {\it Cancer Res.} \textbf{56}, (1996) 5745-5753.
	
\bibitem{Gerisch}A. Gerisch, M. A. J. Chaplain, Mathematical modelling of cancer cell invasion of tissue: Local and non-local models and the effect of adhesion, {\it J. Theoret.
  Biol.} \textbf{250}, (2008) 684-704.

\bibitem{GJV} F. Guillén-González, J. V.  Gutiérrez-Santacreu, From a cell model with active motion to a Hele-Shaw-like system: a numerical approach. {\it Numer. Math.} \textbf{143} (2019), no. 1, 107--137.

\bibitem{HLe} Y. He and K. Li, Asymptotic behavior and time discretization analysis for the non-stationary
Navier-Stokes problem. {\it Numer. Math.} \textbf{98} (2004), no. 4, 647-673.

\bibitem{Hey} J. Heywood and R. Rannacher, Finite element approximation of the nonstationary Navier-Stokes problem. IV. Error analysis for second order time discretization. {\it SIAM J. Numer.
Anal.} \textbf{27} (1990), 353-384.

\bibitem{Hillen} T. Hillen, K. Painter, M. Winkler, 
Convergence of a cancer invasion model to a logistic chemotaxis model, 
{\it Math. Models Methods Appl. Sci.,} \textbf{23}, (2013) 165-198.
	
\bibitem{Lachowicz}	M. Lachowicz, Micro and meso-scales of description corresponding to a model of tissue invasion by solid tumours, {\it Math. Models Methods Appl. Sci.} \textbf{15},
(2005) 1667-1683.

\bibitem{Ladys} O. A. Ladyzhenskaia, V. A. Solonnikov and N. N. Ural'tseva, Linear and Quasi-Linear Equations of Parabolic Type, American Mathematical Soc., 1988.
	
\bibitem{Lachowicz1}M. Lachowicz, Towards microscopic and nonlocal models of tumour invasion of tissue, in: N. Bellomo, M. Chaplain, E. De Angelis (Eds.), Selected Topics
in Cancer Modeling, Birkh\"auser, Boston, 2008, 49-63.

\bibitem{Marciniak} A. Marciniak-Czochra, M. Ptashnyk, Boundedness of solutions of a haptotaxis model, {\it Math. Models Methods Appl. Sci.} \textbf{20}, (2009) 449-476.

\bibitem{Rodrigo}C. Morales-Rodrigo, Local existence and uniqueness of regular solutions in a model of tissue invasion by solid tumours, {\it Math. Comput. Model.,}  \textbf{47}, (2008) 604-613.

\bibitem{Perumpanani} A. J. Perumpanani, H.M. Byrne, Extracellular matrix concentration exerts selection pressure on invasive cells, {\it Eur. J. Cancer,} \textbf{8}, (1999) 1274-1280.

\bibitem{Szymanska}Z. Szyma\'nska, C. Morales-Rodrigo, M. Lachowicz, M. Chaplain, Mathematical modelling of cancer invasion of tissue: The role and effect of nonlocal
interactions, {\it Math. Models Methods Appl. Sci.} \textbf{19}, (2009) 257-281.

\bibitem{Szymanska1}Z. Szyma\'nska, J. Urba\'nski, A. Marciniak-Czochra, Mathematical modelling of the influence of heat shock proteins on cancer invasion of tissue, {\it J. Math.
Biol.} \textbf{58}, (2009) 819-844.

\bibitem{Tao}Y. Tao, C. Cui, A density-dependent chemotaxis-haptotaxis system modeling cancer invasion,  {\it J. Math. Anal. Appl.} \textbf{367}, (2010) 612-624.


\bibitem{Zhan-Zhu}J. Zhang, J. Zhu and R. Zhang, Characteristic splitting mixed finite element analysis of Keller-Segel chemotaxis models, {\it Applied Mathematics and Computation} \textbf{278} (2016), 33-44.

\bibitem{Zhigun}A. Zhigun, C. Surulescu, A. Uatay, 
Global existence for a degenerate haptotaxis model of cancer invasion, 
{\it Z. Angew. Math. Phys.,} \textbf{67}, (2016), Art. 146, 29 pp.
	
\end{thebibliography}
\end{document}